\documentclass[11pt, a4paper]{article}

\usepackage{amsmath}
\usepackage{amsfonts}
\usepackage{amssymb}
\usepackage[english]{babel}
\usepackage{xcolor}
\usepackage{euscript}
\usepackage{eufrak}

\setlength{\topmargin}{-0.5cm} \setlength{\textheight}{24cm}
\setlength{\textwidth}{15.5cm} \setlength{\oddsidemargin}{0.5cm}

\def\english{\selectlanguage{english}}

\providecommand\mathbb{\bf}
\newcommand\R{{\mathbb R}}
\newcommand\N{{\mathbb N}}

\newcommand\Sp{{\mathbb S}}

\newtheorem{thm}{Theorem}[section]
\newtheorem{lemma}{Lemma}[section]
\newtheorem{pro}{Proposition}[section]

\newtheorem{remark}{Remark}[section]

\newcounter{Remark}

\renewcommand\theRemark{\arabic{Remark}}

\newcounter{steps}
\newenvironment{proof}[1][]{%
\par\medbreak\setcounter{steps}{0}
{\noindent\bfseries Proof#1. }} {\hfill\fbox{\ }\medbreak}

\newcounter{substeps}[steps]


\newcommand{\intv}[1]{
\int _{\R ^d} \!#1 \;\mathrm{d}v}

\newcommand{\intw}[1]{
\int _{\R ^d} \!#1 \;\mathrm{d}w}

\newcommand{\intvp}[1]{
\int _{\R ^d} \!#1 \;\mathrm{d}v^\prime}

\newcommand{\vp}[0]{
v^{\prime}}

\newcommand{\fe}[0]{
f ^\varepsilon}

\newcommand{\Divx}[0]{
\mathrm{div}_x}

\newcommand{\Divv}[0]{
\mathrm{div}_v}

\newcommand{\fz}[0]{
f}

\newcommand{\fo}[0]{
f ^{1}}

\newcommand{\lime}[0]{
\lim _{\varepsilon \searrow 0}}

\newcommand{\muf}[0]{
M_{u [\fz]}}

\newcommand{\calmu}[0]{
{\mathcal M}_{u}}

\newcommand{\Phiu}{
\Phi _u}

\newcommand{\sphere}[0]{
\mathbb{S}^{d-1}}

\newcommand{\ltmu}{
L^2 _{M_u}}

\newcommand{\bltmu}{
{\bf L}^2 _{M_u}}

\newcommand{\homu}{
H^1 _{M_u}}

\newcommand{\bhomu}{
{\bf H}^1 _{M_u}}

\newcommand{\thomu}{
\tilde{H}^1 _{M_u}}

\newcommand{\bthomu}{
{\bf \tilde{H}}^1 _{M_u}}

\newcommand{\intth}[1]{
\int _0 ^{\pi} #1 \;\mathrm{d}\theta}

\newcommand{\calL}[0]{
\mathcal{L}}

\newcommand{\sumi}[0]{
\sum _{i = 1}^{d-1}}

\newcommand{\sumj}[0]{
\sum _{j = 1}^{d-1}}

\newcommand{\eps}[0]{
\varepsilon}

\newcommand{\calTu}[0]{
\mathcal{T}_u}

\newcommand{\calTz}[0]{
\mathcal{T}_0}

\newcommand{\calO}[0]{
\mathcal{O}}

\newcommand{\vb}[0]{
\overline{v}}

\newcommand{\Exp}[1]{
\exp \left (#1 \right )}

\newcommand{\md}[0]{
\mathrm{d}}

\newcommand{\intcr}[2]{
\int _{\R_+}#1\int _{-1} ^{+1} #2 \;\mathrm{d}c\mathrm{d}r}

\newcommand{\vab}{
V_{\alpha, \beta} (|v|)}

\newcommand{\lims}[0]{
\lim _{\sigma \searrow 0}}



\begin{document}
\english

\title{Fluid models with phase transition for kinetic equations in swarming}

\author{Miha\"i BOSTAN \thanks{Aix Marseille Universit\'e, CNRS, Centrale Marseille, Institut de Math\'ematiques de Marseille, UMR 7373, Ch\^ateau Gombert 39 rue F. Joliot Curie, 13453 Marseille FRANCE. E-mail : {\tt mihai.bostan@univ-amu.fr}}
\;,\; Jos\'e Antonio CARRILLO
\thanks{Department of Mathematics, Imperial College London, London SW7 2AZ UK.  E-mail : {\tt carrillo@imperial.ac.uk}}
}

\date{ (\today)}

\maketitle

\begin{abstract}
We concentrate on kinetic models for swarming with individuals interacting through self-propelling and friction forces, alignment and noise. We assume that the velocity of each individual relaxes to the mean velocity. In our present case, the equilibria depend on the density and the orientation of the mean velocity, whereas the mean speed is not anymore a free parameter and a phase transition occurs in the homogeneous kinetic equation. We analyze the profile of equilibria for general potentials identifying a family of potentials leading to phase transitions. Finally, we derive the fluid equations when the interaction frequency becomes very large.
\end{abstract}

\paragraph{Keywords:} Swarming, Cucker-Smale model, Phase transition.

\paragraph{AMS classification:} 92D50, 82C40, 92C10.
\\
\\

\section{Introduction}
\label{intro}
This paper concerns the derivation of fluid models for populations of self-propelled individuals, with alignment and noise \cite{CarDorPan09, UAB25, ChuDorMarBerCha07} starting from their kinetic description. The alignment between particles is imposed by relaxing the individuals velocities towards the mean velocity \cite{CFRT10, review, CS2, HL08, HT08, MT11}. We refer to \cite{Neu77,BraHep77,Dob79,CCR10,BCC11,BCC12,HLL09,CCH,CCHS} and the references therein for a derivation of kinetic equations for collective behavior from microscopic models. 

We concentrate on models with phase transition \cite{BD14, BCCD16, Li19, DFL10, DFL15, FL11, VicCziBenCohSho95}. We denote by $f = f(t,x,v)\geq 0$ the particle density in the phase space $(x,v) \in \R^d \times \R^d$, with $d \geq 2$. The self-propulsion and friction mechanism writes $\Divv \{f \nabla _v V(|\cdot|)\}$, where $v \mapsto V(|v|)$ is a confining potential. When considering $V_{\alpha, \beta} (|v|) = \beta \frac{|v|^4}{4} - \alpha \frac{|v|^2}{2}$, with $\alpha, \beta >0$, we obtain the term $\Divv \{f(\beta |v|^2 - \alpha ) v \}$ see \cite{BosCar13, BosCar15} and also \cite{BosAsyAna, BosTraEquSin, BosGuiCen3D} for results based on averaging methods in magnetic confinement. The relaxation towards the mean velocity is given by $\Divv \{f (v - u[f]) \}$ cf. \cite{DM08}, where for any particle density the notation $u[f]$ stands for the mean velocity
\[
u[f] = \frac{\intv{f(v)\;v}}{\intv{f(v)}}.
\]
Including noise with respect to the velocity variable, we obtain the Fokker-Planck type equation
\begin{equation}
\label{Equ1}
\partial _t f + v \cdot \nabla _x f = Q(f):= \Divv\{\sigma \nabla _v f + f(v - u[f]) + f \nabla _v V(|\cdot|)\},\;\;(t,x,v) \in \R_+ \times \R^d \times \R^d\,.
\end{equation}
When considering large time and space scales in \eqref{Equ1}, we are led to the kinetic equation
\begin{equation}
\label{Equ3}
\partial _t \fe + v \cdot \nabla _x \fe = \frac{1}{\eps} Q(\fe),\;\;(t,x,v) \in \R_+ \times \R^d \times \R^d.
\end{equation}
We investigate the asymptotic behavior of the family $(\fe)_{\eps >0}$, when $\eps$ becomes small. We expect that the limit density $f(t,x,\cdot) = \lime \fe (t,x,\cdot)$ is an equilibrium for the interaction mechanism
\[
Q(f(t,x,\cdot)) = 0,\;\;(t,x) \in \R_+ \times \R^d. 
\]
For any $u \in \R^d$ we introduce the notations
\[
\Phiu (v) = \frac{|v - u|^2}{2} + V(|v|),\;\;Z(\sigma, u) = \intv{\exp \left ( - \frac{\Phiu (v)}{\sigma} \right ) },\;\;M_u (v) = \frac{\exp \left ( - \frac{\Phiu (v)}{\sigma} \right ) }{Z(\sigma, u)}.
\]
Actually the function $Z$ depends only on $\sigma$ and $|u|$, see Proposition \ref{Current}, and thus we will write $Z = Z(\sigma, l = |u|)$. Notice that for any smooth particle density $f$ and any $u \in \R^d$ we have
\begin{equation*}
\sigma \nabla _v f + f(v - u[f]) + f \nabla _v V(|\cdot|) = \sigma M_u (v) \nabla _v \left ( \frac{f}{M_u}\right )
\end{equation*}
leading to the following representation formula
\[
Q(f) = \sigma \Divv \left ( \muf \nabla _v \left ( \frac{f}{\muf }\right )\right ).
\]
Multiplying by $f/\muf$ and integrating by parts with respect to the velocity imply that any equilibrium satisfies
\[
f = \rho [f] \muf,\;\;\rho[f] = \intv{f(v)}.
\]
Recall that $u[f]$ is the mean velocity, and therefore we impose
\begin{equation}
\label{Equ6}
\intv{f(t,x,v) (v - u[f(t,x,\cdot)])} = 0,\;\;(t,x) \in \R_+ \times \R^d.
\end{equation}
Notice that $\Phiu$ is left invariant by any orthogonal transformation preserving $u$. Consequently, we deduce (see Proposition \ref{Current}) that $\intv{f(v)\;v}$ is parallel to $u$, and therefore the constraint \eqref{Equ6} fix only the modulus of the mean velocity, and not its orientation (which remains a free parameter). 

Our first important observation gives a characterization to find the bifurcation diagram of stationary solutions of $Q(f) = 0$. We prove that $M_u$ is an equilibrium if and only if $l = |u|$ is a critical point of $Z(\sigma, \cdot)$, cf. Proposition \ref{Current}. Moreover, several values for $|u|$, or only one are admissible, depending on the diffusion coefficient $\sigma$. In that case we will say that a phase transition occurs. Notice that in this work we do not distinguish between phase transitions and bifurcation points. For any particle density $f = f(v)$, the notation $\Omega[f]$ stands for the orientation of the mean velocity $u[f]$, if $u[f] \neq 0$
\[
\Omega[f] = \frac{u[f]}{|u[f]|} = \frac{\intv{f(v) \;v }}{\left | \intv{f(v) \;v }  \right |}
\]
and any vector in $\sphere$, if $u[f] = 0$. Here $\sphere$ is the set of unit vectors in $\R^d$. Notice also that we always have
\[
u[f] = |u[f]|\; \Omega [f].
\]
Finally, for any $(t,x) \in \R_+ \times \R^d$, the limit particle density is a von Mises-Fisher distribution $f(t,x,v) = \rho (t,x) M_{|u|\Omega(t,x)} (v)$ parametrized by the concentration $\rho (t,x) = \rho [f(t,x,\cdot)]$ and the orientation $\Omega (t,x) = \Omega [f(t,x,\cdot)]$. We identify a class of potentials $v \mapsto V(|v|)$ such that a phase transition occurs and we derive the fluid equations satisfied by the macroscopic quantities $\rho, \Omega$. More exactly we assume that the potential $v \mapsto V(|v|)$ satisfies
\begin{equation}
\label{EquWellDefZ} 
\lim _{|v| \to + \infty } \frac{\frac{|v|^2}{2} + V(|v|) }{|v|} = +\infty
\end{equation}
(such that $Z$ is well defined) and belongs to the family $\mathcal{V}$ defined by: there exists $\sigma _0>0$ verifying
\begin{enumerate}
\item
For any $0 < \sigma < \sigma _0$ there is $l(\sigma) >0 $ such that $Z(\sigma,l)$ is stricly increasing on $[0,l(\sigma)]$ and strictly decreasing on $[l(\sigma), +\infty[$;
\item
For any $\sigma \geq \sigma _0, Z(\sigma,l)$ is strictly decreasing on $[0,+\infty[$. 
\end{enumerate}
The first important result in this work shows that potentials in $\mathcal{V}$ have a phase transition at $\sigma=\sigma_0$ as shown in Section 2.

\begin{remark}
The potential $V(|v|) = \beta \frac{|v|^4}{4} - \alpha \frac{|v|^2}{2}$ belongs to the family $\mathcal{V}$ as shown in \cite{Tugaut2013Phase,BCCD16,Li19} in any dimension.
\end{remark}

\begin{thm}
\label{MainRes1}
Assume that the potential $v \mapsto V(|v|)$ satisfies \eqref{EquWellDefZ}, belongs to the family $\mathcal{V}$ defined above and that $0 < \sigma < \sigma _0$. Let us consider $(\fe)_{\eps >0}$ satisfying
\begin{equation}\label{asymplim}
\partial _t \fe + v \cdot \nabla _x \fe = \frac{1}{\eps} \Divv \{\sigma \nabla _v \fe + \fe ( v - u [\fe] + \nabla _v V(|\cdot|)\;) \},\;(t,x,v) \in \R_+ \times \R^d \times \R^d. 
\end{equation}
Therefore, at any $(t,x) \in \R_+ \times \R^d$ the dominant term in the Hilbert expansion $\fe = f + \eps \fo + ...$ is an equilibrium distribution of $Q$, that is $f(t,x,v) = \rho (t,x) M_{u(t,x)} (v)$, where 
\begin{equation}
\label{Equ50} 
u(t,x) = l(\sigma) \Omega (t,x),\;\;(t,x) \in \R_+ \times \R^d
\end{equation}
\begin{equation}
\label{Equ51}
\partial _t \rho + \Divx (\rho u ) = 0,\;\;(t,x) \in \R_+ \times \R^d
\end{equation}
\begin{equation}
\label{Equ52}
\partial _t \Omega + l(\sigma) c_{\perp} \;(\Omega \cdot \nabla _x ) \Omega + \frac{\sigma}{l(\sigma)} (I_d - \Omega \otimes \Omega ) \frac{\nabla _x \rho}{\rho} = 0,\;\;(t,x) \in \R_+ \times \R^d. 
\end{equation}
The constant $c_\perp$ is given by
\[
c_\perp = \frac{\int_{\R_+}r^{d+1} \intth{\cos \theta \, \chi (\cos \theta, r) \,  e(\cos \theta, r, l(\sigma)) \sin ^{d-1} \theta}\md r}{l(\sigma)\int_{\R_+}r^{d} \intth{\chi (\cos \theta, r) \, e(\cos \theta, r, l(\sigma)) \sin ^{d-1} \theta}\md r}
\]
and the function $\chi$ solves
\begin{align}
\label{EquChiRes1}
- \sigma & \partial _c \left [r^{d-3} (1 - c^2) ^{\frac{d-1}{2}} e(c,r, l(\sigma)) \partial _c \chi   \right ]  - \sigma \partial _r \left [r^{d-1}(1 - c^2 ) ^{\frac{d-3}{2}} e(c,r, l(\sigma)) \partial _r \chi  \right ] \\
& + \sigma (d-2) r ^{d-3} (1- c^2) ^{\frac{d-5}{2}} e(c,r, l(\sigma)) \chi = r^d (1-c^2) ^{\frac{d-2}{2}}e(c,r,l(\sigma)), \; (c,r) \in ]-1,1[ \times \R_+\nonumber 
\end{align}
where $e(c, r, l) = \exp \left ( - \frac{r^2}{2\sigma} + \frac{rcl}{\sigma} - \frac{V(r)}{\sigma} \right )$. 
\end{thm}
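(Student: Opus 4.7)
The plan is to carry out a Hilbert expansion $\fe = f + \eps \fo + \eps^2 \ftw + \cdots$, substitute it into \eqref{asymplim}, and extract the macroscopic equations by identifying solvability conditions order by order. At the $\eps^{-1}$-level, the identity $Q(f)=0$ must hold; under the assumptions $V\in\mathcal{V}$ and $0<\sigma<\sigma_0$, the characterization of equilibria (Proposition \ref{Current} applied to the family $\mathcal{V}$) selects $|u[f]|=l(\sigma)$ as the unique non-zero admissible modulus, so $f(t,x,v)=\rho(t,x)\,M_{l(\sigma)\Omega(t,x)}(v)$ with $\rho \geq 0$ and $\Omega\in \sphere$, proving \eqref{Equ50}.

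The continuity equation \eqref{Equ51} is obtained by integrating \eqref{asymplim} against $1$ in $v$: since $Q$ is a velocity divergence, $\partial_t \rho[\fe] + \Divx \intv{\fe\,v} = 0$; passing to the limit and using $\intv{f(t,x,v)\,v} = \rho(t,x)\,l(\sigma)\,\Omega(t,x)$ gives \eqref{Equ51}.

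The main obstacle is the derivation of \eqref{Equ52}: since $Q$ does not conserve momentum, integrating against $v$ still produces $\eps^{-1}$-terms that do not cancel in the limit. The remedy is the Degond--Motsch generalized collision invariant (GCI) strategy. For every fixed $\Omega\in\sphere$ one defines $\psi_\Omega$ to be a GCI if $\intv{\psi_\Omega\, Q(f)} = 0$ for every $f$ with $\Omega[f]=\Omega$; equivalently, writing $\calL_\Omega$ for the linearization of $Q$ at $\rho M_{l(\sigma)\Omega}$, one requires $\calL_\Omega^\ast \psi_\Omega$ to lie in the $(d-1)$-dimensional span of the constraint-gradients $\{v\mapsto A\cdot v : A\in\Omega^\perp\}$. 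The cylindrical symmetry induced by the radial potential $V(|v|)$ suggests the separated ansatz
\[
\psi_\Omega(v) = (A\cdot \mu)\,\chi(c, r),\quad r = |v|,\; c = \frac{v\cdot\Omega}{|v|},\; \mu = \frac{v - (v\cdot\Omega)\Omega}{|v - (v\cdot\Omega)\Omega|}\in\sphere\cap \Omega^\perp,
\]
with $A\in\Omega^\perp$ arbitrary. Computing $\calL_\Omega^\ast \psi_\Omega$ in the $(c,r,\mu)$-variables, using $M_{l(\sigma)\Omega}(v)\propto e(c,r,l(\sigma))$ and the Jacobian $r^{d-1}(1-c^2)^{(d-3)/2}$ of the velocity measure, reduces the GCI condition to precisely the weighted degenerate elliptic equation \eqref{EquChiRes1} for $\chi$ on $]-1,1[\,\times\,\R_+$; the $(d-2)$-term accounts for the curvature cost of rotating $\mu$ on $\sphere\cap\Omega^\perp$. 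Existence and uniqueness of $\chi$ (modulo constants) follows by a Lax--Milgram argument in the appropriate weighted Sobolev space with weight $e(c,r,l(\sigma))$.

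With $\chi$ at hand, multiplying \eqref{asymplim} by $\psi_{\Omega[\fe](t,x)}(v)$ and integrating in $v$ removes the $\eps^{-1}$-term identically, leaving
\[
\intv{\psi_{\Omega[\fe]}(v)\,\bigl(\partial_t \fe + v\cdot \nabla_x \fe\bigr)} = 0.
\]
Passing to the limit and replacing $\fe$ by its leading order $\rho M_{l(\sigma)\Omega}$, the Gaussian structure of $M_u$ allows each term to be computed in $(c,r,\mu)$: after spherical averaging in $\mu$, the $\partial_t\Omega$ moment generates the denominator appearing in the definition of $c_\perp$, the $(\Omega\cdot\nabla_x)\Omega$ moment produces the convective coefficient $l(\sigma)c_\perp$, and the $x$-derivatives falling on $\rho$ and on $M_{l(\sigma)\Omega}$ through $\Omega$ and $\rho$ combine to yield the pressure-like contribution $(\sigma/l(\sigma))(I_d-\Omega\otimes\Omega)\nabla_x\rho/\rho$. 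Since $A\in\Omega^\perp$ is arbitrary, the resulting vector identity is automatically tangential to the sphere, giving \eqref{Equ52}. The most delicate technical steps will be the rigorous well-posedness of the degenerate problem \eqref{EquChiRes1} in the correct weighted space and the careful moment bookkeeping that produces the explicit coefficients $c_\perp$ and $\sigma/l(\sigma)$.
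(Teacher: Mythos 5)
Your outline matches the paper's strategy in broad strokes: select the equilibrium branch $|u|=l(\sigma)$, get continuity from the trivial invariant $\psi=1$, construct a family of nontrivial collision invariants of the form $(A\cdot\mu)\,\chi(c,r)$ with $A\in\Omega^\perp$, reduce to the degenerate elliptic problem \eqref{EquChiRes1} via separation of variables, solve by Lax--Milgram in a weighted Sobolev space, and close the system by moment bookkeeping. The paper phrases the invariants as elements of $\ker \calL_f^\star$ rather than as Degond--Motsch GCIs, but in this problem those two points of view produce the same objects.

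The genuine gap is that you assert, without proof, that the Lagrange multipliers must lie in $\Omega^\perp$ (``the $(d-1)$-dimensional span of the constraint-gradients $\{v\mapsto A\cdot v: A\in\Omega^\perp\}$''). That dimension count is exactly the nontrivial part. If you write the elliptic problem $\sigma\,\Divv(M_u\nabla_v\psi)/M_u=(v-u)\cdot A$ for a general $A\in\R^d$, Lax--Milgram solves it for any $A$ (the compatibility condition $\int(v-u)\cdot A\,M_u\,dv=0$ holds automatically). Yet pairing $\psi$ against $Q(\fe)$ and expanding to $O(\eps)$, the $\eps^{-1}$-term survives unless the self-consistency $W[\psi]:=\intv{M_u\nabla_v\psi}=-A$ holds, and multiplying the elliptic equation by $M_u(v-u)$ shows $-\sigma W[\psi]=\calmu A$. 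So the admissible multipliers are precisely $A\in\ker(\calmu-\sigma I_d)$, where $\calmu=\intv{M_u\,(v-u)\otimes(v-u)}$ is the pressure tensor. Showing that this kernel equals $\Omega^\perp$ (rather than all of $\R^d$ or a smaller space) is the content of Lemma~\ref{Spectral}: it rests on the orthogonal-symmetry decomposition of $\calmu$, the exact ``perpendicular pressure'' identity $\intv{\frac{|v|^2-(v\cdot\Omega)^2}{d-1}M_u}=\sigma$ (obtained by multiplying $\sigma\nabla_v M_u+M_u\nabla_v\Phiu=0$ by $(|v|^2 I_d-v\otimes v)\Omega$), and the formula $\calmu-\sigma I_d=\sigma^2\frac{\partial^2_{ll}Z(\sigma,l(\sigma))}{Z(\sigma,l(\sigma))}\,\Omega\otimes\Omega\le 0$. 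This spectral lemma, together with Lemma~\ref{AdjointRes} which turns the self-consistency into a precise characterization of $\ker\calL_f^\star$, is what separates the present case ($\partial^2_{ll}Z(\sigma,l(\sigma))<0$, only tangential invariants) from Theorem~\ref{MainRes2} (degenerate case $\partial^2_{ll}Z=0$, where the invariant along $\Omega$ also survives and one gets an extra balance for $|u|$). You should supply this pressure-tensor analysis; invoking the ``$(d-1)$-dimensional constraint'' heuristically is not enough, because the map $f\mapsto\Omega[f]$ is nonlinear and the self-consistency of the mean-field linearization is not a byproduct of the constraint count.

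One smaller remark: the selection of $|u|=l(\sigma)$ over $|u|=0$ at leading order is a choice of branch (both are critical points of $Z(\sigma,\cdot)$ for $0<\sigma<\sigma_0$), not a consequence of the Hilbert expansion alone; the paper simply restricts attention to the ordered branch. Also beware of notation: the paper reserves $\psi_\Omega$ for the longitudinal component $F\cdot\Omega$, whereas your $\psi_\Omega$ denotes the transverse invariant $(A\cdot\mu)\chi$; conflating them will cause confusion when you compare with equations \eqref{EquChi}--\eqref{EquChiOme}.
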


\begin{remark}
Several considerations regarding the hydrodynamic equations \eqref{Equ50}-\eqref{Equ52} and the asymptotic limit to obtain them are needed:
\begin{itemize}

\item The asymptotic limit in \eqref{asymplim} is different from the one analysed in \cite{BosCar15} where the friction term is penalized at higher order. The main technical difficulty in \cite{BosCar15} compared to our present work is that to solve for the different orders on the expansion in \cite{BosCar15} we had to deal with Fokker-Planck equations on the velocity sphere with speed $\sqrt{\frac{\alpha}{\beta}}$.

\item The hydrodynamic equations  \eqref{Equ50}-\eqref{Equ52} in the particular case of the potential $V(|v|) = \beta \frac{|v|^4}{4} - \alpha \frac{|v|^2}{2}$ recover the ones obtained in \cite{DM08,DFL10,DFL15,BosCar15} by taking the limit $\alpha\to\infty$ with $\beta/\alpha=O(1)$. In this limit, the particle density $f$ is squeezed to a Dirac on the velocity sphere with speed  $\sqrt{\frac{\alpha}{\beta}}$. The constants can be computed exactly based on \cite{Li19} and they converge towards the exact constants obtained in \cite{DM08,DFL15,BosCar15}. This is left to the reader for verification.

\item The hydrodynamic equations  \eqref{Equ50}-\eqref{Equ52} have the same structure as the equations derived in \cite{DM08,DFL15,BosCar15} just with different constants, and therefore they form a hyperbolic system as shown in \cite[Subsection 4.4]{DM08}.

\end{itemize}
\end{remark}

When $V(|\cdot|)$ belongs to the family $\mathcal{V}$, we know that $|u| \in \{0,l(\sigma)\}$, for any $0 < \sigma < \sigma _0$ and $|u| = 0$ for any $\sigma \geq \sigma _0$. There is no time evolution for $|u|$. But the modulus of the mean velocity evolves in time for other potentials. For example, let us assume that there is $\sigma >0$, $ 0 \leq l_1 (\sigma) < l_2 (\sigma)\leq +\infty$ such that the function $l \mapsto Z(\sigma, l)$ is stricly increasing on $[0,l_1(\sigma)]$, constant on $[l_1(\sigma), l_2(\sigma)[$, and strictly decreasing on $[l_2(\sigma), +\infty[$. In that case, we obtain a balance for $|u|$ as well. 
\begin{thm}
\label{MainRes2}
Assume that the potential $v \mapsto V(|v|)$ satisfies \eqref{EquWellDefZ} and verifies the above hypothesis for some $\sigma >0$. Let us consider $(\fe)_{\eps >0}$ satisfying
\[
\partial _t \fe + v \cdot \nabla _x \fe = \frac{1}{\eps} \Divv \{\sigma \nabla _v \fe + \fe ( v - u [\fe] + \nabla _v V(|\cdot|) \; )\},\;\;(t,x,v) \in \R_+ \times \R^d \times \R^d. 
\]
Therefore, at any $(t,x) \in \R_+ \times \R^d$ the dominant term in the Hilbert expansion $\fe = f + \eps \fo + ...$ is an equilibrium distribution of $Q$, that is $f(t,x,v) = \rho (t,x) M_{u(t,x)} (v)$, where 
\begin{equation}
\label{Equ57}
\partial _t \rho + \Divx (\rho u ) = 0,\;\;(t,x) \in \R_+ \times \R^d
\end{equation}
\begin{align}
\label{Equ58}
\partial _t u & + [ c_{\perp} (I_d - \Omega \otimes \Omega) + c_\parallel \Omega \otimes \Omega] (u\cdot \partial _x) u +  [ (c _\perp - 1) (I_d - \Omega \otimes \Omega) + (c_\parallel - 1) \Omega \otimes \Omega ] \nabla _x \frac{|u|^2}{2} \nonumber \\
& + \sigma \frac{\nabla _x \rho }{\rho} + c_\parallel ^\prime \;\Divx \Omega \;|u| u = 0,\;\;(t,x) \in \R_+ \times \R^d. 
\end{align}
The constants $c_\perp, c_\parallel, c_\parallel ^\prime$ are given by
\[
c_\perp = \frac{\int_{\R_+}r^{d+1} \intth{\cos \theta \, \chi (\cos \theta, r) \,  e(\cos \theta, r, |u|) \sin ^{d-1} \theta}\md r}{|u|\int_{\R_+}r^{d} \intth{\chi (\cos \theta, r) \, e(\cos \theta, r, |u|) \sin ^{d-1} \theta}\md r}
\]
\[
c_\parallel = \frac{\int_{\R_+}r^{d+1} \intth{\cos ^2 \theta \, \chi _\Omega (\cos \theta, r) \,  e(\cos \theta, r, |u|) \sin ^{d-2} \theta}\md r}{2 |u|\int_{\R_+}r^{d} \intth{\cos \theta \chi _\Omega (\cos \theta, r) \, e(\cos \theta, r, |u|) \sin ^{d-2} \theta}\md r}
\]
\[
c_\parallel ^\prime = \frac{\int_{\R_+}r^{d+1} \intth{\chi _\Omega (\cos \theta, r) \,  e(\cos \theta, r, |u|) \sin ^{d} \theta}\md r}{(d-1) |u|\int_{\R_+}r^{d} \intth{\cos \theta \chi _\Omega (\cos \theta, r) \, e(\cos \theta, r, |u|) \sin ^{d-2} \theta}\md r}
\]
the function $\chi$ solves \eqref{EquChiRes1} and the function $\chi _\Omega$ solves
\begin{align*}
- & \sigma \partial _c \{ r ^{d-3} (1 - c^2) ^{\frac{d-1}{2}} e(c,r,|u|) \partial _c \chi _\Omega \} - \sigma \partial _r \{ r ^{d-1} (1 - c^2 ) ^{\frac{d-3}{2}} e(c, r, |u|) \partial _r \chi _\Omega \} \\
& = r^{d-1 } (r c - |u|)  (1 - c^2) ^{\frac{d-3}{2}}e(c,r,|u|),\;(c,r) \in  ]-1,1[ \times ]0,+\infty[\nonumber.
\end{align*}
\end{thm}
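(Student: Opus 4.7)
The plan is to follow the Hilbert expansion with generalized collision invariants (GCI) \`a la Degond--Motsch, adapted to the case where $Z(\sigma,\cdot)$ is constant on $[l_1(\sigma), l_2(\sigma)[$, so the equilibrium manifold is $(d+1)$-dimensional, parametrized by $\rho$ and by the entire vector $u$ (not only by $\Omega$, as in Theorem \ref{MainRes1}). Inserting $\fe = f + \eps \fo + \dots$ into the kinetic equation, at order $\eps^{-1}$ we get $Q(f) = 0$, which by Proposition \ref{Current} forces $f(t,x,v) = \rho(t,x)\, M_{u(t,x)}(v)$ with $|u(t,x)| \in [l_1(\sigma), l_2(\sigma)]$; one therefore has $d+1$ scalar unknowns and needs $d+1$ solvability conditions on the order-$\eps^0$ equation
\[
\partial_t f + v \cdot \nabla_x f = L_u \fo, \qquad L_u \varphi := \sigma \Divv\bigl(M_u \nabla_v(\varphi / M_u)\bigr).
\]
The constant $1$ belongs to $\ker L_u$ ($L_u$ being self-adjoint in $L^2(\md v/M_u)$), and integrating against it yields \eqref{Equ57} directly from $\intv{Q(\fe)} = 0$ and $\intv{\fe v} = \rho[\fe]\,u[\fe]$.

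To extract the other $d$ balances I would work in cylindrical coordinates around $\Omega$, writing $v = r(c\,\Omega + \sqrt{1-c^2}\,\omega_\perp)$ with $r = |v|$, $c = v\cdot\Omega/|v|$, $\omega_\perp \in \sphere \cap \Omega^\perp$, so that $M_u$ is a multiple of $e(c,r,|u|)$ and is independent of $\omega_\perp$. Perturbations of the equilibrium decouple along these coordinates: moving $\Omega$ generates an $\omega_\perp$-valued source, while moving $|u|$ generates, through $\partial_{|u|} M_u \propto M_u(v\cdot\Omega - |u|)/\sigma$, a purely axial source after averaging over the azimuth. This motivates constructing (a) a vector GCI $\chi(c,r)\,\omega_\perp$ with $\chi$ solving \eqref{EquChiRes1} --- the zeroth-order term $(d-2) r^{d-3}(1-c^2)^{(d-5)/2} e\chi$ comes from changing from spherical coordinates on $\Omega^\perp$ to a fixed transverse vector --- and (b) a scalar GCI $\chi_\Omega(c,r)$ whose source is precisely $r^{d-1}(rc-|u|)(1-c^2)^{(d-3)/2}\, e(c,r,|u|)$. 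Both PDEs are elliptic in $(c,r)$, well-posed modulo constants by Lax--Milgram in the weighted $H^1$ associated to $e$; the Fredholm compatibility (source has zero average against $e$ with the right Jacobian) holds by construction.

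Pairing the order-$\eps^0$ equation with $\chi\,\omega_\perp / M_u$ and integrating in $v$ gives the $(I_d - \Omega\otimes\Omega)$-projection of \eqref{Equ58}, the coefficient $c_\perp$ emerging from the cylindrical moments of $v\otimes v$ against $e$ weighted by $\chi$; pairing with $\chi_\Omega / M_u$ gives the $\Omega\otimes\Omega$-projection, producing $c_\parallel$, $c_\parallel'$, and the pressure term $\sigma \nabla_x \rho / \rho$ (the latter coming from $\nabla_x f = (\nabla_x \rho)\,M_u + \rho\,\nabla_x M_u$ and the explicit $\rho$-dependence of $f$). Recombining the transverse and axial projections recovers \eqref{Equ58}. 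The main obstacle is the construction and compatibility of the parallel invariant $\chi_\Omega$, which has no counterpart in Theorem \ref{MainRes1}: there the uniqueness of the critical point of $Z(\sigma,\cdot)$ pins $|u|=l(\sigma)$ and suppresses the axial balance, whereas here the flatness of $Z(\sigma,\cdot)$ on $[l_1,l_2)$ frees $|u|$, demands the extra GCI, and makes the additional convective coefficient $c_\parallel'\,\Divx\Omega\,|u|\,u$ appear. Once $\chi_\Omega$ is in hand, the derivation reduces to a bookkeeping computation in cylindrical coordinates.
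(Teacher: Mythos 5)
Your overall plan---expand in $\eps$, identify the enlarged equilibrium manifold when $Z(\sigma,\cdot)$ is flat on $[l_1,l_2)$, and test the order-$\eps^0$ equation against generalized collision invariants including a new axial one---is exactly the paper's strategy, and the GCI equations you write for $\chi$ and $\chi_\Omega$ are the correct ones. The gap is in the operator and in the admissibility of $\chi_\Omega$. You set $L_u\varphi = \sigma\Divv(M_u\nabla_v(\varphi/M_u))$ and write $\partial_t f + v\cdot\nabla_x f = L_u\fo$, but the linearization of $Q$ around $f=\rho M_u$ is $\calL_f g = L_u g - \Divv\{M_u\intvp{(\vp-u)g(\vp)}\}$ (Proposition \ref{Linear}), and the mean-field correction is not cosmetic: the $L^2(\md v)$-adjoint of $L_u$ alone has only constants in its kernel, so the functions $\chi\omega_\perp$ and $\chi_\Omega$ you test against do \emph{not} annihilate $L_u\fo$. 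They annihilate $\calL_f\fo$ precisely because the mean-field term converts $L_u^\star\psi = -(v-u)\cdot W$ (the PDE you write down) into $\calL_f^\star\psi = 0$, and only when $W$ satisfies a compatibility condition that you do not state.

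That compatibility is the second missing step. You say the flatness of $Z$ ``frees $|u|$ and demands the extra GCI,'' but this is a heuristic, not a verification that $\chi_\Omega\in\ker\calL_f^\star$. The paper's criterion (Lemmas \ref{AdjointRes} and \ref{Spectral}) is that the Lagrange multiplier $W=W[\psi]$ must lie in $\ker(\calmu-\sigma I_d)$, and
\[
\calmu - \sigma I_d = \sigma^2\,\frac{\partial^2_{ll}Z(\sigma,|u|)}{Z(\sigma,|u|)}\,\Omega\otimes\Omega.
\]
Transverse multipliers $W\in(\R\Omega)^\perp$ are always admissible, giving $\chi$; the axial multiplier $W=\Omega$ is admissible iff $\partial^2_{ll}Z(\sigma,|u|)=0$, which is exactly what local constancy of $Z(\sigma,\cdot)$ on $[l_1(\sigma),l_2(\sigma))$ supplies. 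Without this spectral identity for the pressure tensor, the axial balance you want is not justified---$\chi_\Omega$ solves an elliptic PDE, but it has no reason to be a collision invariant. Once admissibility is established, the remaining moment computations in $(c,r)$ coordinates do reduce, as you say, to bookkeeping and reproduce \eqref{Equ58}.
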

Our paper is organized as follows. In Section \ref{PropEqui} we investigate the function $Z$, whose variations will play a crucial role when determining the equilibria of the interaction mechanism $Q$. We identify a family of potentials such that a phase transition occurs for some critical diffusion coefficient $\sigma _0$. Section \ref{LinIntMec} is devoted to the study of the linearization of $Q$ and of its formal adjoint. We are led to study the spectral properties of the pressure tensor. The kernel of the adjoint of the linearization of $Q$ is studied in Section \ref{Invariants}. These elements will play the role of the collision invariants, when determining the macroscopic equations by the moment method. The main results, Theorem \ref{MainRes1}, \ref{MainRes2}, are proved in Section \ref{MainTh}. Some examples are presented in Section \ref{Example}.


\section{Phase transitions and Potentials: Properties of Equilibria}
\label{PropEqui}
For any $u \in \R^d$ we denote by $\calTu$ the family of orthogonal transformations of $\R^d$ preserving $u$. Notice that $\calTz$ is the family of all orthogonal transformations of $\R^d$. 
\begin{remark}
\label{Sym} The functions on $\R^d$ which are left invariant by the family $\calTz$ are those depending only on $|v|$. The functions on $\R^d$ which are left invariant by the family $\calTu, u \neq 0$, are those depending on $v \cdot u$ and $|v|$.
\end{remark}
\begin{lemma}
\label{InvVectField}
Let $u$ be a vector in $\R^d$ and $a : \R^d \to \R^d$ be a integrable vector field on $\R^d$, which is left invariant by the family $\calTu$ {\it i.e.,}
\[
a(^t \mathcal{O}v ) = \;^t \mathcal{O} a(v),\;\;v \in \R^d,\;\;\mathcal{O} \in \calTu.
\]
Then $\intv{a(v)} \in \R u$.
\end{lemma}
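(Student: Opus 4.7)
The plan is to show that $I := \intv{a(v)}$ is left fixed by every element of $\calTu$, then to exploit the fact that the only vectors of $\R^d$ fixed by all orthogonal transformations preserving a given direction are those lying on that direction itself.

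First I would perform a change of variable under the integral. Pick an arbitrary $\mathcal{O} \in \calTu$ and substitute $v = \mathcal{O}w$; since $\mathcal{O}$ is orthogonal, the Jacobian is one and
\[
I = \intv{a(v)} = \intw{a(\mathcal{O}w)}.
\]
The equivariance hypothesis $a({}^t\mathcal{O}v) = {}^t\mathcal{O}a(v)$, applied with $v$ replaced by $\mathcal{O}w$ (noting ${}^t\mathcal{O}\mathcal{O} = I_d$), rewrites as $a(\mathcal{O}w) = \mathcal{O}a(w)$. Plugging this in and pulling $\mathcal{O}$ outside the integral gives $I = \mathcal{O}I$ for every $\mathcal{O} \in \calTu$.

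Next I would identify the set of vectors of $\R^d$ fixed by all such $\mathcal{O}$. If $u = 0$ then $\calTu = \calTz$ is the full orthogonal group $O(d)$, whose only fixed vector is $0 \in \R u$, and the conclusion follows. If $u \neq 0$, decompose $I = I_\parallel + I_\perp$ with $I_\parallel \in \R u$ and $I_\perp \in u^\perp$. Every $\mathcal{O} \in \calTu$ fixes $u$, hence fixes $I_\parallel$, and its restriction to $u^\perp$ ranges over the full orthogonal group of $u^\perp$. The identity $\mathcal{O}I = I$ therefore forces $\mathcal{O}I_\perp = I_\perp$ for every orthogonal transformation of the space $u^\perp$ of dimension $d-1 \geq 1$, which leaves only $I_\perp = 0$. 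Thus $I = I_\parallel \in \R u$.

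There is no real obstacle here: the argument is a standard symmetrization, and the only point requiring a little care is the dimension check $d \geq 2$ ensuring that $u^\perp$ is non-trivial enough for the orthogonal group of $u^\perp$ to have only the zero fixed vector (the case $d = 2$ works because $O(1) = \{\pm 1\}$ already kills any non-zero component). The main technical step, if any, is simply noting that $\calTu$ is stable under transposition (since its elements are orthogonal), so that the passage between $\mathcal{O}I = I$ and ${}^t\mathcal{O}I = I$ is automatic.
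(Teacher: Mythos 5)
Your proof is correct and follows essentially the same route as the paper. The only stylistic difference is in the final step: the paper explicitly constructs the reflections $\calO_\xi = I_d - 2\,\xi\otimes\xi$ for $\xi \in \sphere \cap (\R u)^\perp$ and reads off $\xi\cdot I = 0$ directly, whereas you invoke the standard fact that the full orthogonal group of $u^\perp$ has only the zero fixed vector; the underlying idea (symmetrize under $\calTu$ and observe the perpendicular component must vanish) is identical, and your handling of the degenerate case $u=0$ via $\calTz = O(d)$ is also consistent with the paper's convention $(\R\, 0)^\perp = \R^d$.
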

\begin{proof}
For any $\calO \in \calTu$, we have
\[
\intv{a(v)} = \intvp{a(\;^t \calO \vp ) } = \;^t \calO \intvp{a(\vp)}.
\]
For any $\xi \in \sphere \cap ( \R u ) ^\perp$, we consider $\calO _\xi  = I_d - 2 \xi \otimes \xi \in \calTu$, and thus we obtain
\[
\intv{a(v)} = (I_d - 2 \xi \otimes \xi ) \intvp{a(\vp)},
\]
or equivalently $\xi \cdot \intv{a(v)} = 0$. Therefore, we have $\intv{a(v)} \in ((\R u ) ^\perp) ^ \perp = \R u$.
\end{proof}
We assume that 
\begin{equation}
\label{Equ11}
\lim _{|v| \to + \infty } \frac{\frac{|v|^2}{2} + V(|v|) }{|v|} = +\infty.
\end{equation}
Observe that 
\begin{align*}
Z(\sigma, u ) & = \exp \left ( - \frac{|u|^2}{2\sigma} \right ) \intv{ \exp \left ( - \frac{\frac{|v|^2}{2} + V(|v|) }{\sigma} + \frac{v \cdot u }{\sigma} \right ) } \\
& \leq \exp \left ( - \frac{|u|^2}{2\sigma} \right )\intv{ \exp \left [ - \frac{|v|}{\sigma} \left ( \frac{\frac{|v|^2}{2} + V(|v|) }{|v|} -  |u|\right ) \right ] },
\end{align*}
and therefore, under the hypothesis \eqref{Equ11}, it is easily seen that $Z(\sigma, u)$ is finite for any $\sigma >0$ and $u \in \R^d$. Similarly we check that for any $\sigma >0$ and $u \in \R^d$, all the moments of $M_u$ are finite
\[
\intv{|v|^p M_u (v)} < +\infty, \;\;p \in \N.
\]
For further developments, we recall the formula
\begin{equation}
\label{Equ12}
\intv{\chi \left ( \frac{v \cdot \Omega}{|v|},|v| \right )} = |\Sp ^{d-2}| \int _{\R_+}  r^{d-1} \intth{\chi(\cos \theta, r)  \sin ^{d-2} \theta }\mathrm{d}r,
\end{equation}
for any non negative measurable function $\chi = \chi (c,r) : ]-1,1[\times \R_+ ^\star \to \R$, any $\Omega \in \Sp ^{d-1}$ and $d \geq 2$. Here $|\Sp ^{d-2}|$ is the surface of the unit sphere in $\R^{d-1}$, for $d \geq 3$, and $|\Sp ^0| = 2$ for $d = 2$. 
\begin{pro}
\label{Current}
Assume that the potential $v \mapsto V(|v|)$ satisfies \eqref{Equ11}. Then the following statements hold true~:
\begin{enumerate}
\item
The function $Z(\sigma, u)$ depends only on $\sigma$ and $|u|$. We will simply write
\[
\intv{\exp \left ( - \frac{\Phiu (v)}{\sigma} \right ) }= Z(\sigma, l = |u|).
\]
\item
For any $u \in \R^d$, we have $\intv{M_u (v) v } \in \R_+ u$ and obviously, $\intv{M_0 (v) v } = 0$.

\item
The von Mises-Fisher distribution $M_u$ is an equilibrium if and only if $\partial _l Z(\sigma, l) = 0$. For any $\sigma >0, M_0 (v) = Z^{-1} (\sigma, 0) \exp \left ( - \Phi _0 (v)/\sigma\right )$ is an equilibrium. 
\end{enumerate}
\end{pro}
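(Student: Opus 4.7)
For statement 1, given any orthogonal transformation $\calO$ of $\R^d$, I would change variables $v = \calO w$ in the defining integral of $Z(\sigma, u)$. Since $|\calO w| = |w|$ and $|\calO w - u|^2 = |w - \;^t\calO u|^2$, one obtains $\Phiu(\calO w) = \Phi_{\;^t\calO u}(w)$, hence $Z(\sigma, u) = Z(\sigma, \;^t\calO u)$. As every vector with modulus $|u|$ is of the form $\;^t\calO u$ for a suitable $\calO$, the function $Z$ depends only on $\sigma$ and $l = |u|$.

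For statement 2, I would apply Lemma \ref{InvVectField} to the integrable vector field $a(v) = M_u(v)\, v$. If $\calO \in \calTu$, the same computation with $\;^t\calO u = u$ gives $\Phiu(\;^t\calO v) = \Phiu(v)$ and $M_u(\;^t\calO v) = M_u(v)$, so $a(\;^t\calO v) = M_u(v)\,\;^t\calO v = \;^t\calO a(v)$. The lemma yields $\intv{M_u(v)\, v} = \lambda u$ for some $\lambda \in \R$. When $u = 0$, the parity $M_0(-v) = M_0(v)$ immediately gives $\intv{M_0(v)\, v} = 0$. When $u \neq 0$, to show $\lambda \geq 0$ I would take the inner product with $\Omega = u/|u|$ and invoke formula \eqref{Equ12}:
\begin{equation*}
\lambda |u| = \intv{M_u(v)(v \cdot \Omega)} = \frac{|\Sp^{d-2}|}{Z(\sigma, |u|)} \int_{\R_+} r^d\, e^{-\frac{r^2 + |u|^2 + 2V(r)}{2\sigma}} \intth{\cos\theta\, e^{r|u|\cos\theta/\sigma}\, \sin^{d-2}\theta}\md r.
\end{equation*}
The inner angular integral is nonnegative: $\sin^{d-2}\theta$ is symmetric about $\pi/2$, $\cos\theta$ is antisymmetric about it, and $e^{r|u|\cos\theta/\sigma}$ is strictly increasing in $\cos\theta$ since $r|u|/\sigma \geq 0$, so the positive contribution from $[0,\pi/2]$ dominates.

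For statement 3, $M_u$ is an equilibrium precisely when the compatibility constraint $u[M_u] = u$ holds, that is, $\intv{M_u(v)\, v} = u$. Parametrizing $u = l\Omega$ and using $\Phiu(v) = \frac{|v|^2}{2} + V(|v|) + \frac{l^2}{2} - l(v\cdot \Omega)$, differentiation under the integral sign yields
\begin{equation*}
\sigma\, \partial_l Z(\sigma, l) = \intv{(v\cdot\Omega - l)\, e^{-\Phiu(v)/\sigma}} = Z(\sigma, l)\left[\,\intv{M_u(v)(v\cdot \Omega)} - l\,\right].
\end{equation*}
Combining this with statement 2, which gives $\intv{M_u(v)(v\cdot \Omega)} = \lambda l$, one obtains $\sigma\, \partial_l Z(\sigma, l) = (\lambda - 1)\, l\, Z(\sigma, l)$. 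Hence for $u \neq 0$ the equilibrium condition $\lambda = 1$ is equivalent to $\partial_l Z(\sigma, l) = 0$, while for $u = 0$ statement 2 directly shows that $M_0$ is an equilibrium for every $\sigma > 0$.

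The main delicate point is the sign of $\lambda$ in statement 2, which requires the angular monotonicity argument above; the rest of the proof is routine symmetry combined with differentiation under the integral sign.
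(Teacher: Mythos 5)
Your proof is correct and essentially parallels the paper's, with small stylistic variations. For statement 1 you argue abstractly via the invariance $Z(\sigma,u)=Z(\sigma,{}^t\mathcal{O}u)$ under the change of variable $v=\mathcal{O}w$, whereas the paper reaches the same conclusion by computing $Z$ explicitly in spherical coordinates with formula \eqref{Equ12}; your route is slightly more conceptual, the paper's pays off immediately because the spherical-coordinate expression is reused in \eqref{Equ12Bis}. For the positivity in statement 2, the paper splits the $v$-integral at $v\cdot u>0$ and compares $\Phi_u(v)$ with $\Phi_u(-v)$; you split the angular integral at $\theta=\pi/2$ and compare $\theta$ with $\pi-\theta$ — this is exactly the same reflection written in spherical rather than Cartesian coordinates, so the arguments coincide. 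Statement 3 is likewise the same identity $\sigma\,\partial_l Z = Z\big[\intv{M_u(v)(v\cdot\Omega)}-l\big]$ that the paper obtains in \eqref{Equ12Bis}, and your separate treatment of $u=0$ correctly avoids the degenerate factor $l=0$ in the equivalence $\lambda=1 \Leftrightarrow \partial_l Z=0$.
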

\begin{proof}$\;$\\
1. 
Applying formula \eqref{Equ12} with $\Omega = u /|u|$, if $u \neq 0$, and any $\Omega \in \Sp ^{d-1}$ if $u = 0$, we obtain
\begin{align*}
Z & = \intv{\exp \left ( - \frac{|v|^2}{2\sigma} - \frac{|u|^2}{2\sigma} + \frac{v\cdot u}{\sigma} - \frac{V(|v|)}{\sigma} \right )}\\
& = |\Sp ^{d-2} |  \exp \left ( - \frac{|u|^2}{2\sigma}\right ) \int_{\R_+} \exp \left ( - \frac{r^2}{2\sigma} - \frac{V(r)}{\sigma} \right ) r ^{d-1} \intth{\exp \left ( \frac{r |u| \cos \theta }{\sigma} \right ) \sin ^{d-2} \theta }\mathrm{d}r ,
\end{align*}
and therefore $Z$ depends only on $\sigma$ and $|u|$. \\
2. We consider the integrable vector field $a(v) = M_u (v) v, v \in \R^d$. It is easily seen that for any $\calO \in \calTu$, we have
\[
\Phiu (\;^t \calO v) = \Phiu (v),\;\;M_u (\;^t \calO v ) = M_u (v),\;\;v \in \R^d,
\]
and therefore the vector field $a$ is left invariant by $\calTu$. Our conclusion follows by Lemma \ref{InvVectField}. It remains to check that $\intv{M_u (v) ( v \cdot u) } >0$, when $u \neq 0$. Indeed, we have
\[
Z\intv{M_u (v) ( v \cdot u ) } = \int _{v \cdot u >0} \left [ \exp \left (- \frac{\Phiu (v)}{\sigma}\right ) - \exp \left (- \frac{\Phiu (-v)}{\sigma}\right )  \right ] ( v \cdot u ) \;\mathrm{d}v,
\]
and we are done observing that for any $v$ such that $v \cdot u >0$ we have
\[
- \Phiu (v) = - \frac{|v|^2}{2} + v \cdot u - \frac{|u|^2}{2} - V(|v|) > - \frac{|v|^2}{2} - v \cdot u - \frac{|u|^2}{2} - V(|v|) = - \Phiu (-v).
\]
3. The von Mises-Fisher distribution $M_u$ is an equilibrium if and only if $\intv{M_u (v) (v-u) } = 0$. By the previous statement we know that $\intv{M_u (v) v } \in \R u$ and therefore $M_u$ is an equilibrium iff $\intv{M_u (v) (v \cdot \Omega - |u|)} = 0$, where $\Omega = \frac{u}{|u|}$ if $u \neq 0$ and $\Omega$ is any vector in $\Sp ^{d-1}$ if $u = 0$. But we have
\begin{align}
\label{Equ12Bis}
\partial _l Z(\sigma, |u|) & = |\Sp ^{d-2}| \exp \left ( - \frac{|u|^2}{2\sigma}\right ) \int _{\R_+} \exp \left ( - \frac{r^2}{2\sigma} - \frac{V(r)}{\sigma} \right ) r^{d-1} \\
&\,\,\,\,\,\, \times \intth{\exp \left ( \frac{r |u| \cos \theta}{\sigma} \right ) \frac{r \cos \theta - |u|}{\sigma} \sin ^{d-2} \theta }\mathrm{d}r \nonumber \\
& = \intv{\exp \left ( - \frac{\Phiu (v)}{\sigma} \right ) \frac{v \cdot \Omega - |u|}{\sigma} } \nonumber \\
& = \frac{Z(\sigma, |u|)}{\sigma}\intv{M_u (v) ( v \cdot \Omega - |u| ) },\nonumber 
\end{align}
and therefore $M_u$ is an equilibrium if and only if $l = |u|$ is a critical point of $Z(\sigma, \cdot)$. 
\end{proof}
\begin{remark}
\label{Modulus}
As $Z$ depends only on $\sigma, |u|$, we can write
\begin{align*}
Z(\sigma, |u|) & = \intv{\exp \left ( - \frac{|v - \Omega |u||^2}{2\sigma} - \frac{V(|v|)}{\sigma} \right )}\\
& = \intv{\exp \left (- \frac{|v|^2}{2\sigma} + \frac{(v \cdot \Omega) |u|}{\sigma} - \frac{|u|^2}{2\sigma} - \frac{V(|v|)}{\sigma}   \right ) },
\end{align*}
for any $\Omega \in \Sp ^{d-1}$ and $u \in \R^d$. We deduce that for any $\Omega \in \Sp ^{d-1}$ and $u \in \R^d$, we have
\begin{align*}
\partial _l Z(\sigma, |u|) & =  \intv{\exp \left ( - \frac{|v|^2}{2\sigma} + \frac{(v \cdot \Omega) |u|}{\sigma} - \frac{|u|^2}{2\sigma} - \frac{V(|v|)}{\sigma}\right )\frac{v \cdot \Omega - |u|}{\sigma}}\\
& = \intv{\exp \left ( - \frac{\Phi_{|u|\Omega} (v)}{\sigma} \right )  \frac{v \cdot \Omega - |u|}{\sigma} }\\
& = \intv{\exp \left ( - \frac{\Phiu (v)}{\sigma} \right ) \frac{(v-u ) \cdot \Omega [u]}{\sigma}}
\end{align*}
and
\begin{align*}
\partial ^2 _{ll} Z(\sigma, |u|) & =  \intv{\exp \left ( - \frac{\Phi_{|u|\Omega} (v)}{\sigma} \right )  \frac{[v \cdot \Omega - |u|\;]^2 - \sigma}{\sigma ^2} }\\
& = \intv{\exp \left ( - \frac{\Phiu (v)}{\sigma} \right ) \frac{[(v-u ) \cdot \Omega [u]\;]^2 - \sigma}{\sigma ^2}},
\end{align*}
where $\Omega = \frac{u}{|u|}$ if $u \neq 0$ and $\Omega$ is any vector in $\Sp ^{d-1}$ if $u = 0$ (compare with \eqref{Equ12Bis}, established for $\Omega = u/|u|$, if $u \neq 0$).
\end{remark}
At this point, we know that for any $\sigma >0$, the equilibria are related to the critical points of $Z(\sigma, \cdot)$. In order to find possible bifurcation points of the disordered state $u=0$, let us analyze the variations of $Z(\sigma, \cdot)$ for small $\sigma$. We assume the following hypothesis on the potential
\begin{equation}
\label{Equ14}
V ( |\cdot|) \in C^2 (\R^d),\;\;v \mapsto \frac{|v|^2}{2} + V(|v|)\; \mbox{ is strictly convex on } \R^d.
\end{equation}
For such a potential, we can minimize $\Phiu(v)$ with respect to $v \in \R^d$, for any $u \in \R^d$. Indeed, the function $\Phiu$ is convex, continuous on $\R^d$ and
\begin{align*}
\Phiu (v) & = \frac{|v- u |^2}{2} + V(|v|) = \frac{|v|^2}{2} + V(|v|) - v \cdot u + \frac{|u|^2}{2} \nonumber \\
& = |v| \left (\frac{\frac{|v|^2}{2} + V (|v|)}{|v|} - \frac{v \cdot u }{|v|}    \right ) + \frac{|u|^2}{2} \\
& \geq  |v| \left (\frac{\frac{|v|^2}{2} + V (|v|)}{|v|} - |u|   \right ) + \frac{|u|^2}{2}.
\end{align*}
By \eqref{Equ11} we deduce that $\lim _{|v| \to +\infty} \Phiu (v) = +\infty$ and therefore $\Phiu $ has a minimum point $\vb \in \R^d$. This minimum point is unique (use $\vb - u + (\nabla _v V ( |\cdot |) ) (\vb) = 0$ and the strict convexity of $v \mapsto \frac{|v|^2}{2} + V(|v|)$\;). We intend to analyze the sign of $\partial _l Z(\sigma, |u|)$ for small $\sigma$. Performing the change of variable $v = \vb + \sqrt{\sigma} w$ leads to 
\begin{align}
\label{Equ17}
& \partial _l Z (\sigma, |u|)  \sigma ^{1 - d/2} \exp \left ( \frac{\Phiu (\overline{v})}{\sigma} \right )  = \intv{\Exp{- \frac{\Phiu (v) - \Phiu (\overline{v}) - \nabla _v \Phiu (\overline{v}) \cdot (v - \overline{v})}{\sigma} }\\
& \times \frac{(v-u) \cdot \Omega [u] }{\sigma ^{d/2}}} \nonumber \\
&  = \int_{\R^d}\Exp{- \frac{\Phiu(\overline{v} + \sqrt{\sigma} w) - \Phiu(\overline{v}) - \sqrt{\sigma} \nabla _v \Phiu (\overline{v}) \cdot w}{\sigma}} ( \overline{v} + \sqrt{\sigma} w - u ) \cdot \Omega[u]\;\mathrm{d}w \nonumber \\
& = \int_{\R^d}\Exp{- \frac{\Phi _0(\overline{v} + \sqrt{\sigma} w) - \Phi _0(\overline{v}) - \sqrt{\sigma} \nabla _v \Phi _0 (\overline{v}) \cdot w}{\sigma}} ( \overline{v} + \sqrt{\sigma} w - u ) \cdot \Omega[u]\;\mathrm{d}w \nonumber.
\end{align}
We need to determine the sign of $(\overline{v} - u) \cdot \Omega[u]$, where $\overline{v}$ is the minimum point of $\Phiu$. As $V(|\cdot|) \in C^1(\R^d)$, we have $V^\prime (0) = 0$. We assume that $V(\cdot)$ possesses another critical point $r_0 >0$ and 
\begin{equation}
\label{Equ15} V^\prime (r) <0 \;\mbox{ for any } 0 < r < r_0 \;\mbox{ and } V^\prime (r) >0\;\mbox{ for any } r >r_0. 
\end{equation}
Notice that this is the case for $V_{\alpha, \beta} (r) = \beta \frac{r^4}{4} - \alpha \frac{r^2}{2}, \alpha, \beta >0$, with $r_0 = \sqrt{\alpha/\beta}$. 
\begin{pro}
\label{Sign}
Assume that \eqref{Equ11}, \eqref{Equ14}, \eqref{Equ15} hold true. Then
\begin{enumerate}
\item
The function $r \mapsto r + V^\prime (r)$ is strictly increasing on $\R_+$ and maps $[0,r_0]$ to $[0,r_0]$, and $]r_0, +\infty[$ to $]r_0, +\infty[$.

\item
We have
\[
(\overline{v} - u) \cdot \Omega[u] >0\;\mbox{ for any } 0 < |u| < r_0,\;\;\inf _{\delta \leq |u| \leq r_0 - \delta} (\overline{v} - u) \cdot \Omega[u] >0, \;\; 0 < \delta < \frac{r_0}{2}
\]
and
\[
(\overline{v} - u) \cdot \Omega[u] <0\;\mbox{ for any } |u| > r_0,\;\;\inf _{ |u| \geq r_0 + \delta} (u - \overline{v}) \cdot \Omega[u] >0, \;\; \delta >0.
\]
\end{enumerate}
\end{pro}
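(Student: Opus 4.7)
My plan is to reduce both statements to a scalar analysis of the one-variable map $g(r) := r + V'(r)$ on $\R_+$. I would first observe that Statement 1 is a direct consequence of \eqref{Equ14}: restricting the strictly convex function $v \mapsto |v|^2/2 + V(|v|)$ to any ray $\R_+ \Omega$ yields a strictly convex function of $r \geq 0$ whose derivative is precisely $g(r)$, so $g$ is strictly increasing on $\R_+$. Regularity of the radial extension at the origin forces $V'(0)=0$, and \eqref{Equ15} gives $V'(r_0)=0$, so $g(0)=0$ and $g(r_0)=r_0$. I would then check that $g(r) \to +\infty$ as $r \to +\infty$: otherwise $V'(r) \leq M - r$ for large $r$, and integrating gives $V(r) \leq V(r_0) + M(r-r_0) - (r^2-r_0^2)/2$, contradicting the superlinear growth imposed by \eqref{Equ11}. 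Together with continuity, this shows that $g$ is a strictly increasing bijection of $[0,r_0]$ onto $[0,r_0]$ and of $]r_0,+\infty[$ onto $]r_0,+\infty[$.

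For Statement 2 I would exploit the first-order condition for the unique minimiser $\vb$ of $\Phiu$. When $u \neq 0$ one has $\vb \neq 0$ since $\nabla_v \Phiu(0) = -u \neq 0$, and the optimality condition reads
\[
\vb + V'(|\vb|)\,\frac{\vb}{|\vb|} = u,
\]
equivalently $\vb\,(|\vb| + V'(|\vb|))/|\vb| = u$. By Statement 1 the scalar coefficient $|\vb| + V'(|\vb|) = g(|\vb|)$ is strictly positive for $|\vb| > 0$, hence $\vb$ is a strictly positive multiple of $u$: $\vb = |\vb|\,\Omega[u]$ with $|\vb| = g^{-1}(|u|)$. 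Projecting the optimality identity onto $\Omega[u]$ gives the key reformulation
\[
(\vb - u)\cdot \Omega[u] = |\vb| - |u| = -V'(|\vb|).
\]
The pointwise signs follow at once from Statement 1 combined with \eqref{Equ15}: for $0 < |u| < r_0$ one has $|\vb| = g^{-1}(|u|) \in \;]0, r_0[$, so $V'(|\vb|) < 0$ and the inner product is positive; for $|u| > r_0$ one has $|\vb| > r_0$, so $V'(|\vb|) > 0$ and the inner product is negative.

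For the uniform lower bounds I would invoke continuity of $g^{-1}$ (as the inverse of a strictly increasing continuous bijection) and compactness: the map $|u| \mapsto -V'(g^{-1}(|u|))$ is continuous and strictly positive on the compact interval $[\delta, r_0 - \delta]$, so its infimum there is strictly positive, and the same argument handles $|u| \in [r_0 + \delta, R]$ for any finite $R$. I expect the genuinely delicate point to be the tail $|u| \to +\infty$: writing $V'(|\vb|) = |u| - g^{-1}(|u|)$, this quantity is not monotone in $|u|$ under the given hypotheses alone (its derivative equals $V''(|\vb|)/(1+V''(|\vb|))$, whose sign is not controlled), so the uniform bound at infinity seems to require either a sharper use of \eqref{Equ11} and \eqref{Equ14} together or a mild additional growth assumption on $V'$ at infinity. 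This tail estimate is the main obstacle I would devote the most attention to.
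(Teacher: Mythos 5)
Your Part 1 and the pointwise sign analysis in Part 2 follow the paper's proof essentially line for line: both reduce to the scalar function $g(r)=r+V'(r)$, both obtain the key identity
\[
(\overline{v}-u)\cdot\Omega[u]=-V'(|\overline{v}|), \qquad |\overline{v}|=g^{-1}(|u|),
\]
and both get the compact-interval infimum by continuity of $g^{-1}$ and of $V'$. The one place you diverge from the paper is where you pause: you flag the tail estimate $\inf_{|u|\geq r_0+\delta}V'\bigl(g^{-1}(|u|)\bigr)>0$ as not following from the stated hypotheses. That instinct is correct, and it is in fact a gap in the paper itself: the paper dismisses it with ``As before,'' but the argument ``before'' was compactness of $[\delta,r_0-\delta]$, which does not extend to the unbounded set $[r_0+\delta,+\infty[$. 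Under \eqref{Equ11}, \eqref{Equ14}, \eqref{Equ15} alone the claimed tail bound can fail. For instance, take a potential with $V'(r)=1/(1+r)$ for $r$ large, smoothly matched on $[0,r_0+1]$ so that $V'(0)=0$, $V'<0$ on $]0,r_0[$ and $V'(r_0)=0$: then $1+V''(r)=1-(1+r)^{-2}>0$ and $1+V'(r)/r>0$, so $v\mapsto|v|^2/2+V(|v|)$ is strictly convex and \eqref{Equ14} holds; $V(r)\sim\log r$, so \eqref{Equ11} holds; \eqref{Equ15} holds by construction; yet $V'(r)\to 0$, so $\inf_{|u|\geq r_0+\delta}V'(g^{-1}(|u|))=0$, contradicting the last displayed inequality of the proposition. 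To close the gap one would need an extra hypothesis such as $\liminf_{r\to\infty}V'(r)>0$. The damage to the rest of the paper is limited, since this proposition serves only to motivate the abstract class $\mathcal{V}$ which the main theorems assume directly; but the proposition as stated is slightly too strong, and your hesitation at this point is a feature of the proposal, not a flaw.
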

\begin{proof}$\;$\\
1. By \eqref{Equ14} we know that $\Phi _0$ is strictly convex on $\R^d$ and we deduce that $r \mapsto \frac{r^2}{2} + V(r)$ is strictly convex on $\R_+$. Therefore the function $r \mapsto r + V^\prime (r)$ is strictly increasing on $\R_+$ and maps $[0,r_0]$ to $[0,r_0]$. It remains to check that it is unbounded when $r \to +\infty$. Suppose that there is a constant $C$ such that $r + V^\prime (r) \leq C, r \in \R_+$. After integration with respect to $r$, one gets
\[
\frac{r^2}{2} + V(r) \leq V(0) + Cr,\;\;r \in \R_+,
\]
implying that 
\[
\frac{\frac{r^2}{2} + V(r)}{r} \leq \frac{V(0)}{r} + C,\;\;r \in \R_+,
\]
which contradicts \eqref{Equ11}.\\
2. Let us consider $0 < |u| < r_0$. Therefore, $\overline{v} \neq 0$ and
\[
\left ( |\overline{v}| + V^\prime (|\overline{v}| ) \right ) \frac{\overline{v}}{|\overline{v}|} = u ,
\]
implying that $ |\overline{v}| + V^\prime (|\overline{v}| ) = |u| \in ]0,r_0[$. By the previous statement we obtain $0 < |\overline{v}| < r_0$, $\Omega [\overline{v}] = \frac{\overline{v}}{|\overline{v}|} = \frac{u}{|u|} = \Omega [u]$, and thus
\[
( \overline{v} - u) \cdot \Omega[u] = - V^\prime(|\overline{v}|) \frac{\overline{v}}{|\overline{v}|} \cdot \Omega[u] = - V^\prime (|\overline{v}|)>0.
\]
Clearly, for any $0 < \delta < r_0/2$, we have
\[
\inf _{\delta \leq |u| \leq r_0 - \delta} (\overline{v} - u) \cdot \Omega[u] = \inf _{\delta \leq |u| \leq r_0 - \delta} ( - V^\prime(|\overline{v}|) ) >0.
\]
Similarly, for any $|u| > r_0$, we have $|\overline{v}| > r_0$ and
\[
( \overline{v} - u) \cdot \Omega[u] = - V^\prime(|\overline{v}|) \frac{\overline{v}}{|\overline{v}|} \cdot \Omega[u] = - V^\prime (|\overline{v}|)<0.
\]
As before, for any $\delta >0$, we obtain
\[
\inf _{ |u| \geq r_0 + \delta} (u - \overline{v} ) \cdot \Omega[u] = \inf _{|u| \geq r_0 + \delta}  V^\prime(|\overline{v}|)  >0.
\]
\end{proof}
The previous arguments allow us to complete the analysis of the variations of $Z(\sigma, |u|)$, when $\sigma$ is small. The convergence when $\sigma \searrow 0$ in \eqref{Equ17} can be handled by dominated convergence, provided that $w \mapsto |w| \Exp{- \frac{\partial _v ^2 \Phi _0 (\overline{v}) w \cdot w }{2}}$ belongs to $L^1(\R^d)$. We assume that there is $\lambda <1$ such that 
\begin{equation}
\label{Equ18}
v \mapsto V_\lambda (|v|) := \lambda \frac{|v|^2}{2} + V(|v|) \;\mbox{ is convex on } \R^d. 
\end{equation}
The potentials $V_{\alpha, \beta} (|v|) = \beta \frac{|v|^4}{4} - \alpha \frac{|v|^2}{2}, 0 < \alpha < 1, \beta >0$ satisfy the above hypothesis. Under \eqref{Equ18}, we write
\[
\Phi _0 (v) = (1 - \lambda ) \frac{|v|^2}{2} + V_\lambda (|v|),\;\;v \in \R^d,
\]
and therefore
\[
\partial _v ^2 \Phi _0 (v) = (1 - \lambda ) I_d + \partial _v ^2 V_\lambda (|\cdot|) \geq (1- \lambda ) I_d,\;\;v \in \R^d,
\]
implying that 
\[
\int_{\R^d} |w| \Exp{- \frac{\partial _v ^2 \Phi _0 (\overline{v}) w \cdot w }{2}} \;\mathrm{d}w \leq \int_{\R^d} |w| \Exp{- \frac{(1- \lambda) |w|^2 }{2}} \;\mathrm{d}w < + \infty.
\]
Notice that \eqref{Equ18} guarantees \eqref{Equ11} and \eqref{Equ14}. Indeed, the function $v \mapsto V_\lambda (|v|)$ being convex, it is bounded from below by a linear function
\[
\exists \; (v_\lambda, C_\lambda ) \in \R^d \times \R \;\mbox{ such that } V_\lambda (|v|) \geq ( v \cdot v_\lambda ) + C_\lambda,\;\;v \in \R^d,
\]
and therefore 
\[
\frac{\Phi _0 (v)}{|v|} = \frac{(1 - \lambda) \frac{|v|^2}{2} + V_\lambda (|v|)}{|v|} \geq (1 - \lambda) \frac{|v|}{2} - |v_\lambda | + \frac{C_\lambda}{|v|} \to +\infty,\;\mbox{ as } |v| \to +\infty. 
\]
Obviously, $\Phi _0 $ is strictly convex, as sum between the strictly convex function $v \mapsto (1- \lambda) \frac{|v|^2}{2}$ and the convex function $v \mapsto V_\lambda (|v|)$.

In order to conclude the study of the variations of $Z$ for small $\sigma >0$, we consider potentials $V$ satisfying $V(|\cdot|) \in C^2(\R^d)$, \eqref{Equ15} and \eqref{Equ18}. We come back to \eqref{Equ17}. Notice that 
\begin{align*}
\Phi _0 (\overline{v} + \sqrt{\sigma} w) - \Phi _0 (\overline{v}) - \sqrt{\sigma} \nabla _v \Phi _0 (\overline{v}) \cdot w & \geq (1- \lambda) \frac{|\overline{v} + \sqrt{\sigma}w|^2}{2} - (1 - \lambda) \frac{|\overline{v}|^2}{2} \\
& - (1- \lambda) \sqrt{\sigma} \overline{v} \cdot w =  (1 - \lambda) \sigma \frac{|w|^2}{2},
\end{align*}
implying that, for any $0 < \sigma \leq 1$
\begin{align*}
& \left  |\Exp{- \frac{\Phi _0 (\overline{v} + \sqrt{\sigma} w) - \Phi _0 (\overline{v}) - \sqrt{\sigma} \nabla _v \Phi _0 (\overline{v}) \cdot w }{\sigma} } (\overline{v} + \sqrt{\sigma} w - u) \cdot \Omega[u] \right | \\
& \leq \Exp{- (1 - \lambda)  \frac{|w|^2}{2}} \left [ |(\overline{v} - u ) \cdot \Omega [u] | + |w| \right ].
\end{align*}
As the function $w \mapsto \Exp{ - (1 - \lambda)  \frac{|w|^2}{2}} \left [ |(\overline{v} - u ) \cdot \Omega [u] | + |w| \right ]$ belongs to $L^1(\R^d)$, we deduce by dominated convergence that
\[
\lim _{\sigma \searrow 0 } \left \{ \partial _l Z (\sigma, |u|)  \sigma ^{1 - d/2} \exp \left ( \frac{\Phiu (\overline{v})}{\sigma} \right ) \right \} = (\overline{v} - u ) \cdot \Omega [u]\int _{\R^d} \Exp{- \frac{\partial _v ^2 \Phi _0 (\overline{v})w \cdot w}{2}}\;\mathrm{d}w.
\]
As we know, cf. Proposition \ref{Sign},  that $\inf_{|u|\in [\delta, r_0 - \delta] \cup [r_0 + \delta, +\infty[ } | (\overline{v} - u ) \cdot \Omega [u] | >0, 0 < \delta < r_0/2$, we deduce that for any $\delta \in ]0, r_0/2[$, there is $\sigma _\delta >0$ such that 
\[
\partial _l Z(\sigma, |u|)>0\;\mbox{ for any } 0 < \sigma < \sigma_\delta,\;\;\delta \leq |u| \leq r_0 - \delta
\]
and
\[
\partial _l Z(\sigma, |u|)<0\;\mbox{ for any } 0 < \sigma < \sigma_\delta,\;\;|u| \geq r_0 + \delta.
\]
Motivated by the above behavior of the function $Z$, we assume that the potential $v \mapsto V(|v|)$ satisfies \eqref{Equ11} (such that $Z$ is well defined) and belongs to the family $\mathcal{V}$ defined by: there exists $\sigma _0>0$ verifying
\begin{enumerate}
\item
For any $0 < \sigma < \sigma _0$ there is $l(\sigma) >0 $ such that $Z(\sigma,l)$ is stricly increasing on $[0,l(\sigma)]$ and strictly decreasing on $[l(\sigma), +\infty[$;
\item
For any $\sigma \geq \sigma _0, Z(\sigma,l)$ is strictly decreasing on $[0,+\infty[$. 
\end{enumerate}
In fact, the critical diffusion coefficient $\sigma_0$ vanishes the second order derivative of $Z$ with respect to $l$, at $l = 0$, as shown next.
\begin{pro}
\label{CriticalDiffusion}
Let $V(|\cdot|) \in \mathcal{V}$ be a potential satisfying \eqref{Equ11}. Then we have
\[
\partial _{ll} ^2 Z (\sigma, 0) \geq 0,\;\;0 < \sigma < \sigma _0,\;\;\partial _{ll} ^2 Z(\sigma _0, 0) = 0,\;\;\partial _{ll} ^2 Z (\sigma, 0) \leq 0,\;\;\sigma > \sigma _0
\]
and
\[
\partial _{ll} ^2 Z (\sigma, l(\sigma)) \leq 0,\;\;0 < \sigma < \sigma _0.
\]
\end{pro}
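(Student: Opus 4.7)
The plan is to exploit the monotonicity hypotheses defining the class $\mathcal{V}$ together with the smoothness and symmetry of $Z(\sigma,\cdot)$ in the variable $l$. First I would verify that $Z(\sigma,\cdot)$ is of class $C^2$ on $\R_+$, with first and second derivatives given by the integral formulas displayed in Remark \ref{Modulus}. This follows by differentiation under the integral sign together with dominated convergence: under \eqref{Equ11} all moments of $\exp(-\Phi_u(v)/\sigma)$ are finite, uniformly for $|u|$ in a bounded set.

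Next I would observe that $Z(\sigma,\cdot)$ extends to an even function of $l \in \R$. Indeed, from the formula displayed in the proof of Proposition \ref{Current},
\[
Z(\sigma, l) = |\Sp^{d-2}|\, e^{-l^2/(2\sigma)} \int_{\R_+} e^{-r^2/(2\sigma) - V(r)/\sigma} r^{d-1} \intth{e^{rl\cos\theta/\sigma} \sin^{d-2}\theta}\md r,
\]
the change of variable $\theta \mapsto \pi - \theta$ in the inner angular integral shows parity, and in particular $\partial_l Z(\sigma, 0) = 0$ for every $\sigma > 0$.

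For the statements at $l = 0$ I would apply a second-order Taylor expansion at the origin in each regime and read off the sign. For $0 < \sigma < \sigma_0$, the defining property of $\mathcal{V}$ gives $Z(\sigma, l) > Z(\sigma, 0)$ for all $0 < l < l(\sigma)$; combined with $\partial_l Z(\sigma, 0) = 0$, this forces $\partial_{ll}^2 Z(\sigma, 0) \geq 0$. For $\sigma \geq \sigma_0$, strict decrease on $[0, +\infty[$ yields $Z(\sigma, l) < Z(\sigma, 0)$ for every $l > 0$, and Taylor gives $\partial_{ll}^2 Z(\sigma, 0) \leq 0$. The only delicate point is the equality at $\sigma = \sigma_0$: here I would invoke continuity of $\sigma \mapsto \partial_{ll}^2 Z(\sigma, 0)$ (again by dominated convergence in the integral representation of Remark \ref{Modulus}) and pass to the limit $\sigma \nearrow \sigma_0$ in the inequality $\partial_{ll}^2 Z(\sigma, 0) \geq 0$, combining with the already established $\partial_{ll}^2 Z(\sigma_0, 0) \leq 0$.

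For the last inequality I would use that, by the very definition of $l(\sigma) > 0$, it is an interior global maximum of $Z(\sigma, \cdot)$ on $[0, +\infty[$ (strictly increasing before, strictly decreasing after), so that $\partial_l Z(\sigma, l(\sigma)) = 0$ and the second-order necessary condition for a local maximum yields $\partial_{ll}^2 Z(\sigma, l(\sigma)) \leq 0$. The main obstacle is only the matching at $\sigma = \sigma_0$, where the two one-sided bounds must be combined via continuity in $\sigma$; all the remaining assertions follow directly from the second-order Taylor test at a critical point of $Z(\sigma,\cdot)$.
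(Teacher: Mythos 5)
Your proof follows the same route as the paper: expand $Z(\sigma,\cdot)$ to second order at $l=0$ and at $l=l(\sigma)$, and read off the sign of the second derivative from the monotonicity properties built into the definition of the class $\mathcal{V}$. If anything you are more explicit than the paper about two points it leaves implicit — the parity argument giving $\partial_l Z(\sigma,0)=0$, and the continuity of $\sigma\mapsto\partial_{ll}^2 Z(\sigma,0)$ needed to pass from $\partial_{ll}^2 Z(\sigma,0)\geq 0$ for $\sigma<\sigma_0$ to the same inequality at $\sigma_0$ before combining it with the directly obtained $\partial_{ll}^2 Z(\sigma_0,0)\leq 0$ — so your proposal is complete and correct.
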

\begin{proof}
By Remark \ref{Modulus}  we know that $Z(\sigma, \cdot)$ possesses a second order derivative with respect to $l$. As $\partial _l Z(\sigma,0) = 0$, we write
\[
\frac{1}{2}\partial _{ll} ^2 Z(\sigma,0) = \lim _{l \searrow 0} \frac{Z(\sigma, l) - Z(\sigma,0) - l \partial _l Z(\sigma,0)}{l^2} = \lim _{l \searrow 0} \frac{Z(\sigma,l) - Z(\sigma,0)}{l^2}.
\]
We deduce that $\partial _{ll} ^2 Z(\sigma, 0) \geq 0$ for any $0 < \sigma \leq \sigma _0$ and $\partial _{ll} ^2 Z(\sigma, 0) \leq 0$ for any $\sigma \geq \sigma _0$. In particular $\partial _{ll} ^2 Z(\sigma_0,0) = 0$. For any $0 < \sigma < \sigma _0$, the function $Z(\sigma, \cdot)$ possesses a maximum at $l = l(\sigma)>0$ and therefore $\partial _{ll} ^2 Z(\sigma, l(\sigma)) \leq 0$. 
\end{proof}
It is also easily seen that $\lim _{\sigma \nearrow \sigma_0} l(\sigma) = 0$. Indeed, assume that there is $\eta >0$ and a sequence $(\sigma_n)_{n \geq 1} \nearrow \sigma _0$ such that $0 < \sigma _n < \sigma _0, l(\sigma_n) \geq \eta$ for any $n \geq 1$. We have
\[
Z(\sigma_n, l(\sigma_n)) \geq Z(\sigma _n, \eta) > Z(\sigma _n,0),\;\;n \geq 1.
\]
After passing to the limit when $n \to +\infty$, we obtain a contradiction
\[
Z(\sigma_0, \eta) \geq Z(\sigma _0, 0) > Z(\sigma _0, \eta)
\]
and therefore $\lim_{\sigma \nearrow \sigma_0} l(\sigma) = 0$. We  have proved that $\sigma \mapsto l(\sigma)$ is continuous. 

\begin{remark}
Given a potential $V(|\cdot|) \in \mathcal{V}$, then the unique bifurcation point from the disordered state happens at $\sigma_0$. In fact, if we define the function
$$
H(\sigma,l)=\intv{M_u (v) ( v \cdot \Omega - l ) }\,,
$$
as in \cite{BCCD16}. Then by \eqref{Equ12Bis}, we get $\sigma \partial _l Z(\sigma, l) = Z(\sigma, l) H(\sigma,l)$. By taking the derivative with respect to $l$, we obtain
$$
\partial_l H = \sigma \left(\frac{\partial^2 _{ll} Z}{Z} - \frac{(\partial _l Z)^2}{Z^2}\right) \,.
$$
Therefore, for the curve $l(\sigma)$ such that $H(\sigma,l(\sigma))=0$, we get $\partial_l H (\sigma_0,0)=0$. Using implicit differentiation and the continuity of the curves and the functions involved,
it is also easy to check that $\partial_\sigma H (\sigma_0,0)=0$. Therefore, to clarify the behavior of the two curves at $\sigma_0$, one needs to work more to compute the $\lim _{\sigma \nearrow \sigma_0}l^\prime (\sigma)$. In any case, this shows that $\sigma_0$ is the only bifurcation point from the manifold of disorder states $u=0$ for potentials $V(|\cdot|) \in \mathcal{V}$ without the need of applying the Crandall-Rabinowitz bifurcation theorem. It would be interesting to use Crandall-Rabinowitz for general potentials to identify more general conditions for bifurcations.
\end{remark}

In the last part of this section, we explore some properties of the potentials $V$ in the class $\mathcal{V}$. We show that under the hypothesis \eqref{Equ18}, we retrieve a weaker version of \eqref{Equ15}. 
\begin{pro}
Let $V(|\cdot|) \in \mathcal{V}$ be a potential satisfying \eqref{Equ11}. The application $\sigma \mapsto l(\sigma)$ is continuous on $\R_+ ^\star$. Moreover, if $V(|\cdot|) \in C^2(\R^d)$ verifies \eqref{Equ18} and there is the limit $\lims l(\sigma) = r_0 >0$, then
\[
V^\prime (r) \leq 0 \;\mbox{ for any } 0 < r \leq r_0 \;\mbox{ and } V^\prime (r) \geq 0 \;\mbox{ for any } r \geq r_0.
\]
\end{pro}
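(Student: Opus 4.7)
The proposition splits into two independent parts.

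\emph{Continuity of $\sigma\mapsto l(\sigma)$ on $\R_+^\star$.} I extend $l(\sigma)=0$ for $\sigma\geq\sigma_0$; the limit $\lim_{\sigma\nearrow\sigma_0}l(\sigma)=0$ already established above then gives continuity at every $\sigma\geq\sigma_0$. So fix $\sigma^\star\in(0,\sigma_0)$ and a sequence $\sigma_n\to\sigma^\star$ in $(0,\sigma_0)$. The first step is to show that $(l(\sigma_n))_n$ is bounded. By joint continuity of $Z$,
\[
Z(\sigma_n,l(\sigma_n)) \geq Z(\sigma_n,l(\sigma^\star)) \longrightarrow Z(\sigma^\star,l(\sigma^\star)) > Z(\sigma^\star,0) > 0,
\]
so any subsequence $l(\sigma_{n_k})\to+\infty$ would contradict the decay $Z(\sigma,l)\to 0$ as $l\to+\infty$ uniformly for $\sigma$ in compact subintervals of $\R_+^\star$, a fact obtained from \eqref{Equ11} by splitting the integral defining $Z$ at $|v|=l/2$ and exploiting the super-linear growth of $|v|^2/2+V(|v|)$. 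Once boundedness is in hand, extract a convergent subsequence $l(\sigma_{n_k})\to L$. Continuity of $Z$ and the defining property of $l(\sigma_{n_k})$ identify $L$ as a maximiser of $Z(\sigma^\star,\cdot)$ on $\R_+$; uniqueness of the maximiser for $V(|\cdot|)\in\mathcal{V}$ forces $L=l(\sigma^\star)$, and the whole sequence converges.

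\emph{Sign of $V'$ under \eqref{Equ18}.} Hypothesis \eqref{Equ18} implies the strict convexity of $v\mapsto|v|^2/2+V(|v|)$, so statement~1 of Proposition \ref{Sign} applies verbatim: $r\mapsto r+V'(r)$ is a strictly increasing continuous bijection $\R_+\to\R_+$. Let $\bar r:\R_+\to\R_+$ denote its inverse; then for $u=l\Omega$ with $l>0$, the unique minimiser of $\Phi_u$ is $\bar v(l)=\bar r(l)\,\Omega$ and $(\bar v(l)-u)\cdot\Omega[u]=-V'(\bar r(l))$. The dominated-convergence computation carried out right after \eqref{Equ17}, whose hypotheses are exactly \eqref{Equ11} and \eqref{Equ18}, yields for every fixed $l>0$
\[
\partial_l Z(\sigma,l)\,\sigma^{1-d/2}\,\Exp{\Phi_{l\Omega}(\bar v(l))/\sigma} \xrightarrow[\sigma\searrow 0]{} -V'(\bar r(l))\int_{\R^d}\Exp{-\tfrac12\,\partial_v^2\Phi_0(\bar v(l))w\cdot w}\md w,
\]
with a strictly positive Gaussian integral on the right.

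The hypothesis $\lims l(\sigma)=r_0$ now pins down the sign of $\partial_l Z(\sigma,\cdot)$ in the limit. For any $l\in(0,r_0)$, one has $l<l(\sigma)$ as soon as $\sigma$ is small enough, so $Z(\sigma,\cdot)$ is non-decreasing at $l$ and $\partial_l Z(\sigma,l)\geq 0$; the asymptotic identity forces $V'(\bar r(l))\leq 0$, and combined with $\bar r(l)+V'(\bar r(l))=l$ this gives $\bar r(l)\geq l$. Symmetrically, for $l>r_0$ the inequality $\partial_l Z(\sigma,l)\leq 0$ for small $\sigma$ yields $V'(\bar r(l))\geq 0$ and $\bar r(l)\leq l$. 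Continuity of $\bar r$ sandwiches $\bar r(r_0)$ between $r_0$ and $r_0$, hence $\bar r(r_0)=r_0$ and $V'(r_0)=0$. Since $\bar r$ is strictly increasing, it bijects $(0,r_0]$ onto $(0,r_0]$ and $[r_0,+\infty)$ onto $[r_0,+\infty)$, so the two sign statements for $V'\circ\bar r$ translate into $V'(r)\leq 0$ on $(0,r_0]$ and $V'(r)\geq 0$ on $[r_0,+\infty)$, as announced.

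The only delicate step is the passage to the limit $\sigma\searrow 0$ in $\partial_l Z(\sigma,l)$ at fixed $l\neq r_0$, but this is precisely the dominated-convergence argument carried out in the text preceding the definition of $\mathcal{V}$; the only new ingredient is the dictionary $\bar r=(\mathrm{id}+V')^{-1}$, which converts the conclusion about $V'(\bar r(l))$ into the announced inequalities for $V'$.
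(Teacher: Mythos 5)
Your treatment of the second part (the sign of $V'$) is correct and runs essentially parallel to the paper's: you pass to the limit $\sigma\searrow 0$ in $\partial_l Z(\sigma,l)$ using \eqref{Equ17} and dominated convergence, deduce $V'(\bar r(l))\le 0$ for $l<r_0$ and $V'(\bar r(l))\ge 0$ for $l>r_0$, and then transfer the sign to $V'$ itself via the monotone map $r\mapsto r+V'(r)$. The paper argues directly that $|\bar v|+V'(|\bar v|)\ge l+V'(l)$ from $|\bar v|\ge l$, which yields $V'(l)\le 0$; your detour through the inverse $\bar r$ is an equivalent reformulation. Be careful not to cite Proposition \ref{Sign} statement~1 ``verbatim'': that statement assumes \eqref{Equ15}, which is precisely the conclusion here, so you may only import the strict monotonicity of $r\mapsto r+V'(r)$ (which needs only \eqref{Equ14}), as you in fact do once you prove $V'(r_0)=0$ independently.

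The continuity argument has a genuine gap. You go through compactness, which forces you first to prove that $(l(\sigma_n))$ is bounded, and you justify this by claiming that $Z(\sigma,l)\to 0$ as $l\to+\infty$ uniformly on compact $\sigma$-intervals, ``by splitting the integral at $|v|=l/2$.'' That decay does not follow from \eqref{Equ11} together with $V(|\cdot|)\in\mathcal{V}$. Near $v\approx u$ the quadratic penalty $|v-u|^2/2$ is negligible, so a lower bound of the form $Z(\sigma,l)\gtrsim e^{-V(l)/\sigma}$ holds on a unit ball around $u$; hypothesis \eqref{Equ11} controls the growth of $|v|^2/2+V(|v|)$ but says nothing that prevents $V$ itself from staying bounded, and membership in $\mathcal{V}$ only guarantees that $Z(\sigma,\cdot)$ is eventually strictly decreasing, hence has a limit, but not that the limit is $0$. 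The paper's proof avoids the boundedness issue entirely: if $l(\sigma_{n_k})>l(\sigma^\star)+\eta$ for all $k$, then both $l(\sigma^\star)$ and $l(\sigma^\star)+\eta$ lie in $[0,l(\sigma_{n_k})]$, where $Z(\sigma_{n_k},\cdot)$ is strictly increasing, so $Z(\sigma_{n_k},l(\sigma^\star)+\eta)>Z(\sigma_{n_k},l(\sigma^\star))$; passing to the limit contradicts the strict decrease of $Z(\sigma^\star,\cdot)$ past $l(\sigma^\star)$. This two-sided direct contradiction uses only monotonicity on a fixed compact interval and requires no control of $Z$ at $l=+\infty$. Your compactness scheme can be repaired by inserting exactly this observation in place of the decay claim, but as written the proof of boundedness is unsupported.
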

\begin{proof}
We are done if we check the continuity ant any $\sigma \in ]0,\sigma _0[$. Assume that there is a sequence $(\sigma _n)_{n \geq 1} \subset ]0,\sigma _0[$, $\lim_{n \to +\infty} \sigma _n = \sigma \in ]0,\sigma _0[$ and $\eta >0$ such that $l(\sigma _n ) > l(\sigma) + \eta$ for any $n \geq 1$. We have
\[
Z(\sigma_n, l(\sigma_n)) > Z(\sigma_n, l(\sigma) + \eta) > Z(\sigma _n, l(\sigma _n)),\;\;n \geq 1,
\]
leading to the contradiction
\[
Z(\sigma, l(\sigma) + \eta) \geq Z(\sigma, l(\sigma)) > Z(\sigma, l(\sigma) + \eta).
\]
Similarly, assume that there is a sequence $(\sigma _n)_{n \geq 1} \subset ]0,\sigma _0[$, $\lim_{n \to +\infty} \sigma _n = \sigma \in ]0,\sigma _0[$ and $\eta \in ]0,l(\sigma)[$ such that $l(\sigma _n ) < l(\sigma) - \eta$ for any $n \geq 1$. We have
\[
Z(\sigma _n, l (\sigma _n)) \geq Z(\sigma _n, l(\sigma) - \eta) > Z(\sigma _n, l(\sigma)),
\]
leading to the contradiction
\[
Z(\sigma, l(\sigma) - \eta) \geq Z(\sigma, l(\sigma)) > Z(\sigma, l(\sigma) - \eta).
\]
Therefore $\lim _{n \to +\infty} l(\sigma _n) = l(\sigma)$ for any sequence $(\sigma_n)_{n\geq 1}$, $\lim_{n \to +\infty} \sigma _n = \sigma \in ]0,\sigma_0[$. 

Assume now that $\lims l (\sigma) = r_0 >0$. For any $l \in ]0,r_0[$, we have $0 < l < l(\sigma)$ for $\sigma \in ]0,\sigma _0[$ small enough. As $Z(\sigma, \cdot)$ is strictly increasing on $[0,l(\sigma)]$, we deduce that $\partial _l Z(\sigma, l) >0$ for $\sigma$ small enough, and by \eqref{Equ17} it comes that
\[
\intw{\Exp{- \frac{\Phi _0 (\vb + \sqrt{\sigma} w ) - \Phi _0 (\vb) - \sqrt{\sigma} \nabla _v \Phi _0 (\vb) \cdot w}{\sigma}}( - V^\prime(|\vb|) + \sqrt{\sigma} w \cdot \Omega ) } >0,
\]
where $\vb$ is the minimum point of $\Phi _{l\Omega}$, that is $\vb = |\vb| \Omega, |\vb| + V^\prime (|\vb|) = l$. Passing to the limit when $\sigma \searrow 0$ yields
\[
\intw{\Exp{- \frac{\partial _v ^2 \Phi _0 (\vb) w \cdot w}{2} }} \;V^\prime (|\vb|) \leq 0,
\]
and therefore $V^\prime (|\vb|) \leq 0$. As before, \eqref{Equ18} implies \eqref{Equ14} and therefore $r \mapsto r + V^\prime (r)$ is strictly increasing on $\R_+$. We have $l - |\vb| = V^\prime (|\vb|) \leq 0$ and $l =  |\vb| + V^\prime (|\vb|) \geq l + V^\prime (l)$ saying that $V^\prime (l) \leq 0$ for any $l \in ]0,r_0[$, and also for $l = r_0$.

Consider now $l >r_0$. For $\sigma \in ]0,\sigma_0[$ small enough we have $l > l(\sigma)$ and therefore $\partial _l Z(\sigma, l) <0$. As before, \eqref{Equ17} leads to $l - |\vb| = V^\prime ( |\vb|) \geq 0$ and we have $l =  |\vb| + V^\prime (|\vb|) \leq l + V^\prime (l)$ saying that $V^\prime (l) \geq 0$ for any $l >r_0$, and also for $l = r_0$. In particular $r_0$ is a critical point of $V$.
\end{proof}
In the next result we analyze the behavior of $l(\sigma)$ for $\sigma$ small.
\begin{pro}
\label{SmallSigma}
Let $V (|\cdot|) \in \mathcal{V}$ be a potential satisfying \eqref{Equ11}, \eqref{Equ18}. If $V (|\cdot|) \in C_b ^3 (\R^d)$ and there is the limit $\lims l(\sigma) = r_0 >0$, then we have for any $\Omega \in \sphere$
\[
V^{\prime \prime} (r_0) \lims \frac{l(\sigma) - r_0}{\sigma} = - \frac{1 + V^{\prime \prime} (r_0)}{6} \frac{\intw{( w \cdot \Omega) \partial _v ^3 \Phi _0 (r_0 \Omega) (w,w,w)\Exp{- \frac{\partial _v ^2 \Phi _0 (r_0 \Omega) w \cdot w}{2} }}}{\intw{\Exp{- \frac{\partial _v ^2 \Phi _0 (r_0 \Omega) w \cdot w}{2} }}}
\]
where $ \partial _v ^3 \Phi _0 (r_0 \Omega) (w,w,w) = \sum _{1\leq i,j,k\leq d} \frac{\partial ^3 \Phi _0 }{\partial _{v_k} \partial _{v_j} \partial _{v_i}} (r_0 \Omega) w_k w_j w_i$.
\end{pro}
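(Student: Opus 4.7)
The plan is to exploit the first-order condition $\partial_l Z(\sigma, l(\sigma)) = 0$, rewrite it using the Laplace-type representation \eqref{Equ17}, and Taylor-expand around the minimizer $\vb$ of $\Phiu$ (at $u = l(\sigma)\Omega$) to one order past the Gaussian. Recall that $\vb = |\vb|\Omega$ with $|\vb| + V'(|\vb|) = l(\sigma)$, and the previous proposition gives $V'(r_0) = 0$, so $|\vb|\to r_0$ as $\sigma\searrow 0$. Setting $u = l(\sigma)\Omega$ in \eqref{Equ17}, the prefactor simplifies to $(\vb + \sqrt{\sigma}w - u)\cdot\Omega = -V'(|\vb|) + \sqrt{\sigma}(w\cdot\Omega)$.

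Next I would Taylor expand the exponent to third order via the integral form of the remainder, writing
\[
\frac{\Phi _0(\vb + \sqrt{\sigma}w) - \Phi _0(\vb) - \sqrt{\sigma}\nabla _v\Phi _0(\vb)\cdot w}{\sigma} = \frac{\partial _v^2\Phi _0(\vb)w\cdot w}{2} + \delta_\sigma(w),
\]
with $\delta_\sigma(w) = \frac{\sqrt{\sigma}}{2}\int_0^1(1-s)^2\partial _v^3\Phi _0(\vb+s\sqrt{\sigma}w)(w,w,w)\md s$, so that $\delta_\sigma(w) = O(\sqrt{\sigma}|w|^3)$ uniformly (thanks to $V\in C_b^3$) and $\delta_\sigma(w) \to \frac{\sqrt{\sigma}}{6}\partial _v^3\Phi _0(r_0\Omega)(w,w,w)$ pointwise. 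Abbreviating $g(w) = \frac{\partial _v^2\Phi _0(\vb)w\cdot w}{2}$, the expansion $\Exp{-g-\delta_\sigma} = \Exp{-g}\bigl(1 - \delta_\sigma + O(\delta_\sigma^2)\bigr)$ combined with $\partial _l Z(\sigma, l(\sigma)) = 0$ yields, after integration in $w$,
\[
0 = \intw{\Exp{-g-\delta_\sigma}\bigl(-V'(|\vb|) + \sqrt{\sigma}(w\cdot\Omega)\bigr)}.
\]

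The key observation is a parity argument: under $w\mapsto -w$ the Gaussian $\Exp{-g}$ is even since the Hessian is symmetric, so Gaussian moments of odd polynomials in $w$ vanish. Among the four cross terms obtained by distributing the product, only two survive to leading order: the constant $-V'(|\vb|)$ and the mixing term $-\sqrt{\sigma}(w\cdot\Omega)\delta_\sigma \sim -\frac{\sigma}{6}(w\cdot\Omega)\partial _v^3\Phi _0(\vb)(w,w,w)$; the remaining contributions $\sqrt{\sigma}(w\cdot\Omega)$ and $V'(|\vb|)\delta_\sigma$ are odd in $w$ to leading order. Dominated convergence, justified by the uniform bound $\Exp{-g-\delta_\sigma}\leq\Exp{-(1-\lambda)|w|^2/2}$ stemming from \eqref{Equ18} (as already used above \eqref{Equ17}), then yields after dividing by $\intw{\Exp{-g}}$
\[
V'(|\vb|) = -\frac{\sigma}{6}\;\frac{\intw{(w\cdot\Omega)\partial _v^3\Phi _0(r_0\Omega)(w,w,w)\Exp{-\frac{\partial _v^2\Phi _0(r_0\Omega)w\cdot w}{2}}}}{\intw{\Exp{-\frac{\partial _v^2\Phi _0(r_0\Omega)w\cdot w}{2}}}} + o(\sigma).
\]

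Finally I would convert $V'(|\vb|)$ into $l(\sigma) - r_0$. Since $V'(r_0) = 0$, Taylor expansion gives $V'(|\vb|) = V''(r_0)(|\vb| - r_0) + o(|\vb| - r_0)$, and inserting this into $|\vb| + V'(|\vb|) = l(\sigma)$ produces $(1 + V''(r_0))(|\vb| - r_0) = l(\sigma) - r_0 + o(|\vb| - r_0)$, noting that $1 + V''(r_0) > 0$ thanks to \eqref{Equ18}. Combining the two relations yields $V'(|\vb|) = \frac{V''(r_0)}{1+V''(r_0)}\bigl(l(\sigma) - r_0\bigr) + o(l(\sigma) - r_0)$; dividing by $\sigma$, letting $\sigma\searrow 0$, and multiplying both sides by $1 + V''(r_0)$ produces the announced identity. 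The main technical obstacle is the uniform Laplace-expansion estimate: one must bound $\delta_\sigma^2 \Exp{-g}$ by a fixed integrable Gaussian weight uniformly in $\sigma$ and simultaneously handle two distinct scales of smallness (the $\sqrt{\sigma}$ in the cubic correction and $|\vb| - r_0$ in the base point), which is precisely where the $C_b^3$ regularity and the convexity bound \eqref{Equ18} come together.
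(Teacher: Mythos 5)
Your argument follows the paper's proof essentially step by step: both start from the first-order condition $\partial_l Z(\sigma,l(\sigma))=0$ rewritten via \eqref{Equ17} around the minimizer $\vb=|\vb|\Omega$ with $|\vb|+V'(|\vb|)=l(\sigma)$, both identify $\partial_v^3\Phi_0$ as the leading Laplace correction once the first Gaussian moment of $w\cdot\Omega$ is killed by parity, and both convert $V'(|\vb|)/\sigma$ into $(1+V''(r_0))^{-1}$ times the announced limit through the same Taylor manipulation of $|\vb|+V'(|\vb|)=l(\sigma)$. The only divergence is bookkeeping of the remainder: the paper computes the limit of $\sigma^{-1/2}$ times the difference of the two exponentials directly, whereas you expand $\exp(-\delta_\sigma)=1-\delta_\sigma+O(\delta_\sigma^2)$; if you keep your version, note that $\delta_\sigma(w)\sim\sqrt{\sigma}|w|^3$ is not pointwise small, so the exponential remainder cannot be dominated by $\delta_\sigma^2$ times a Gaussian alone, and you should instead invoke, as the paper does, the uniform convexity bound $\Exp{-\frac{\Phi_0(\vb+\sqrt{\sigma}w)-\Phi_0(\vb)-\sqrt{\sigma}\nabla_v\Phi_0(\vb)\cdot w}{\sigma}}\leq\Exp{-\frac{(1-\lambda)|w|^2}{2}}$ coming from \eqref{Equ18} to control the error.
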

\begin{proof}
We fix $\Omega \in \sphere$. For any $\sigma \in ]0,\sigma _0[$ we have $\partial _l Z(\sigma, l(\sigma)) = 0$, and \eqref{Equ17} implies
\begin{equation}
\label{Equ71}
\intw{\Exp{- \frac{\Phi _0 (\vb + \sqrt{\sigma} w ) - \Phi _0 (\vb) - \sqrt{\sigma} \nabla _v \Phi _0 (\vb) \cdot w}{\sigma}}( - V^\prime(|\vb|) + \sqrt{\sigma} w \cdot \Omega )} = 0,
\end{equation}
where  $\vb$ is the minimum point of $\Phi _{l(\sigma)\Omega}$, that is $\vb = |\vb| \Omega, |\vb| + V^\prime (|\vb|) = l(\sigma)$. As the function $r \mapsto r + V^\prime (r)$ is strictly increasing on $\R_+$, when $\sigma \searrow 0$, we have $l(\sigma) \to r_0$ and $|\vb|$ converges toward the reciprocal image of $r_0$, through the function $r \mapsto r + V^\prime(r)$, which is $r_0$. We deduce
\[
\frac{l(\sigma) - r_0}{\sigma} = \frac{|\vb| - r_0 }{\sigma} + \frac{V^\prime(|\vb|) - V^\prime(r_0)}{|\vb| - r_0} \;\frac{|\vb| - r_0}{\sigma} ,
\]
implying that 
\[
\lims \frac{l(\sigma) - r_0}{\sigma} =( 1 + V ^{\prime \prime} (r_0 )) \lims \frac{|\vb| - r_0}{\sigma}.
\]
We will compute 
\[
\lims \frac{V^\prime (|\vb|)}{\sigma} = V ^{\prime \prime} (r_0 ) \lims \frac{|\vb| - r_0 }{\sigma}.
\]
Thanks to \eqref{Equ71} we have
\begin{align}
\label{Equ72}
& \intw{\Exp{- \frac{\partial _v ^2 \Phi _0 (r_0 \Omega) w\cdot w}{2}}}\;\lims \frac{V^\prime (|\vb|)}{\sigma}\\
&  = \lims \intw{\Exp{- \frac{\Phi _0 (\vb + \sqrt{\sigma} w ) - \Phi _0 (\vb) - \sqrt{\sigma} \nabla _v \Phi _0 (\vb) \cdot w}{\sigma}}\frac{ w \cdot \Omega }{\sqrt{\sigma}} }.\nonumber 
\end{align}
Observe that 
\begin{align}
\label{Equ73}
\intw{& \Exp{- \frac{\Phi _0 (\vb + \sqrt{\sigma} w ) - \Phi _0 (\vb) - \sqrt{\sigma} \nabla _v \Phi _0 (\vb) \cdot w}{\sigma}}\frac{ w \cdot \Omega }{\sqrt{\sigma}} } \\
& = \intw{\left [ \Exp{- \frac{\Phi _0 (\vb + \sqrt{\sigma} w ) - \Phi _0 (\vb) - \sqrt{\sigma} \nabla _v \Phi _0 (\vb) \cdot w}{\sigma}} \right. \nonumber \\
& \left. \quad -  \Exp{- \frac{\partial _v ^2 \Phi _0 (\vb) w\cdot w}{2}} \right ] \frac{ w \cdot \Omega }{\sqrt{\sigma}}} \nonumber 
\end{align}
and 
\begin{align*}
\lims & \frac{1}{\sqrt{\sigma}}\left [ \Exp{- \frac{\Phi _0 (\vb + \sqrt{\sigma} w ) - \Phi _0 (\vb) - \sqrt{\sigma} \nabla _v \Phi _0 (\vb) \cdot w}{\sigma}}  -  \Exp{- \frac{\partial _v ^2 \Phi _0 (\vb) w\cdot w}{2}} \right ]\\
& = - \Exp{- \frac{\partial _v ^2 \Phi _0 (r_0 \Omega) w\cdot w}{2}} \\
& \quad \times \lims \frac{\Phi _0 (\vb + \sqrt{\sigma} w ) - \Phi _0 (\vb) - \sqrt{\sigma} \nabla _v \Phi _0 (\vb) \cdot w - \frac{\sigma}{2} \;\partial _v ^2 \Phi _0 (\vb)w\cdot w}{\sigma ^{3/2}}\\
& = - \Exp{- \frac{\partial _v ^2 \Phi _0 (r_0 \Omega) w\cdot w}{2}}\lims \frac{1}{\sqrt{\sigma}} \int _0 ^1 (1-t) [ \partial _v ^2 \Phi _0 (\vb + t \sqrt{\sigma} w) -  \partial _v ^2 \Phi _0 (\vb)]w\cdot w\;\md t\\
& = - \frac{1}{6} \partial _v ^3 \Phi _0 (r_0 \Omega) (w,w,w) \Exp{- \frac{\partial _v ^2 \Phi _0 (r_0 \Omega) w\cdot w}{2}}.
\end{align*}
Recall that, thanks to \eqref{Equ18}, we have $\partial _v ^2 \Phi _0 (v) \geq (1- \lambda ) I_d, v \in \R^d$, implying that 
\[
\frac{\Phi _0 (\vb + \sqrt{\sigma} w ) - \Phi _0 (\vb) - \sqrt{\sigma} \nabla _v \Phi _0 (\vb) \cdot w}{\sigma} = \int _0 ^1 (1-t) \partial _v ^2 \Phi _0 (\vb + t \sqrt{\sigma} w) w \cdot w \;\md t \geq (1 - \lambda) \frac{|w|^2}{2}
\]
and
\[
\frac{\partial _v ^2 \Phi _0 (\vb) w \cdot w}{2} \geq (1 - \lambda) \frac{|w|^2}{2}, \;\;w \in \R^d.
\]
Therefore the integrand of the right hand side in \eqref{Equ73} can be bounded, uniformly with respect to $\sigma>0$ by a $L^1$ function
\begin{align*}
& \left | \Exp{- \frac{\Phi _0 (\vb + \sqrt{\sigma} w ) - \Phi _0 (\vb) - \sqrt{\sigma} \nabla _v \Phi _0 (\vb) \cdot w}{\sigma}}  - \Exp{-\frac{\partial _v ^2 \Phi _0 (\vb) w \cdot w}{2}}\right |\frac{|(w \cdot \Omega)|}{\sqrt{\sigma}}\\
& \leq  \Exp{- (1 - \lambda) \frac{|w|^2}{2}} \frac{|(w \cdot \Omega)|}{\sqrt{\sigma}}\int _0 ^1 (1-t) [  \partial _v ^2 \Phi _0 (\vb + t \sqrt{\sigma} w) - \partial _v ^2 \Phi _0 (\vb) ]w \cdot w\;\md t\\
& \leq \|V(|\cdot |)\|_{C_b ^3 (\R^d)} |w|^2 \Exp{- (1 - \lambda) \frac{|w|^2}{2}},\;\;w \in \R^d.
\end{align*}
Combining \eqref{Equ72}, \eqref{Equ73}, we obtain by dominated convergence 
\begin{align*}
V^{\prime \prime}(r_0) \lims \frac{|\vb| - r_0 }{\sigma} & = \lims \frac{V^\prime(|\vb|)}{\sigma} \\
& = - \frac{\intw{(w \cdot \Omega) \partial _v ^3 \Phi _0 (r_0 \Omega) (w,w,w) 
\Exp{ - \frac{\partial _v ^2 \Phi _0 (r_0 \Omega) w \cdot w}{2}}}   }{6 \intw{\Exp{ - \frac{\partial _v ^2 \Phi _0 (r_0 \Omega) w \cdot w}{2}}}   }
\end{align*}
and therefore
\begin{align*}
V^{\prime \prime}(r_0)&  \lims \frac{l(\sigma) - r_0}{\sigma}  = ( 1 + V^{\prime \prime}(r_0) )\;V^{\prime \prime}(r_0) \lims \frac{|\vb| - r_0 }{\sigma} \\
& = - \frac{1 + V^{\prime \prime}(r_0)}{6}\frac{\intw{(w \cdot \Omega) \partial _v ^3 \Phi _0 (r_0 \Omega) (w,w,w) 
\Exp{ - \frac{\partial _v ^2 \Phi _0 (r_0 \Omega) w \cdot w}{2}}}   }{\intw{\Exp{ - \frac{\partial _v ^2 \Phi _0 (r_0 \Omega) w \cdot w}{2}}}}.
\end{align*}
\end{proof}

\section{Linearization of the interaction mechanism}
\label{LinIntMec}
We intend to investigate the asymptotic behavior of \eqref{Equ3} when $\eps \searrow 0$. We introduce the formal development
\[
\fe = f + \eps \fo + ...
\]
and we expect that
$
Q(f) = 0
$ and
\begin{equation}
\label{Equ21} \partial _t f + v \cdot \nabla _x f = \lime \frac{Q(\fe) - Q(f)}{\eps} = \mathrm{d}Q_f (\fo) = : \calL _f (\fo).
\end{equation}
As seen before, for any $(t,x) \in \R_+ \times \R^d$, the individual density $f(t,x,\cdot)$ is a von Mises-Fisher distribution
\[
f(t,x,v) = \rho (t,x) M_{|u|\Omega (t,x)} (v),\;\;v \in \R^d
\]
where $|u|$ is a critical point of $Z(\sigma, \cdot)$, that is
\[
|u| \in \{0, l(\sigma)\}\;\mbox{ if } 0 < \sigma < \sigma _0\;\mbox{ and }\;|u| = 0\;\mbox{ if } \; \sigma \geq \sigma _0. 
\]
It remains to determine the fluid equations satisfied by the macroscopic quantities $\rho, \Omega$. When $|u| = 0$, the continuity equation leads to $\partial _t \rho = 0$. In the sequel we concentrate on the case $|u| = l(\sigma), 0 < \sigma < \sigma_0$ (that is, the modulus of the mean velocity is given, as a function of $\sigma$). We follow the strategy in \cite{BosCar15, AceBosCarDeg19}. We consider
\[
\ltmu = \{ \chi : \R^d \to \R \mbox{ measurable },\;\intv{(\chi (v))^2 M_u(v)} < + \infty \}
\]
and
\[
\homu = \{ \chi : \R^d \to \R \mbox{ measurable },\;\intv{[\;(\chi (v))^2 + |\nabla _v \chi |^2\;]M_u(v)} < + \infty \}.
\]
We introduce the usual scalar products 
\[
(\chi, \theta)_{M_u} = \intv{\chi (v) \theta (v) M_u (v)},\;\;\chi, \theta \in \ltmu ,
\]
\[
((\chi, \theta))_{M_u} = \intv{( \chi (v) \theta (v) + \nabla _v \chi \cdot \nabla _v \theta) M_u (v)},\;\;\chi, \theta \in \homu
\]
and we denote by $|\cdot |_{M_u}, \|\cdot \|_{M_u}$ the associated norms. Moreover we need a Poincar\'e inequality. This comes from the equivalence between the Fokker-Planck and Schr\"odinger operators. As described in \cite{BonCarGouPav16}, we can write it as
\[
- \frac{\sigma}{\sqrt{M_u}} \Divv\left ( M_u \nabla _v \left ( \frac{g}{\sqrt{M_u}}\right ) \right ) = - \sigma \Delta _v g + \left [ \frac{1}{4\sigma} |\nabla _v \Phiu |^2 - \frac{1}{2} \Delta _v \Phiu \right ] g.
\]
The operator $\mathcal{H}_u = - \sigma \Delta _v + \left [ \frac{1}{4\sigma} |\nabla _v \Phiu |^2 - \frac{1}{2} \Delta _v \Phiu \right ]$ is defined in the domain
\[
D(\mathcal{H}_u) = \left \{g \in L^2 (\R^d),\;\left [ \frac{1}{4\sigma} |\nabla _v \Phiu |^2 - \frac{1}{2} \Delta _v \Phiu \right ]g \in L^2 (\R^d),\;\;\Delta _v g \in L^2 (\R^d)  \right \}.
\]
We have a spectral decomposition of the operator $\mathcal{H}_u$ under suitable confining assumptions (cf. Theorem XIII.67 in \cite{ReedSimon78}).
\begin{lemma}
\label{RSVol4}
Assume that the function $v \mapsto \frac{1}{4\sigma} |\nabla _v \Phiu |^2 - \frac{1}{2}\Delta _v \Phiu$ belongs to $L^1_{\mathrm{loc}}(\R^d)$, is bounded from below and is coercive {\it i.e.}
\[
\lim _{|v| \to + \infty} \left [ \frac{1}{4\sigma} |\nabla _v \Phiu |^2 - \frac{1}{2}\Delta _v \Phiu \right ]=  + \infty.
\]
Then $\mathcal{H}_u ^{-1}$ is a self adjoint compact operator in $L^2(\R^d)$ and $\mathcal{H}_u$ admits a spectral 
decomposition, that is, a nondecreasing sequence of real numbers $(\lambda _u ^n)_{n\in \N}$, $\lim _{n \to + \infty} \lambda _u ^n = + \infty$, and a $L^2(\R^d)$-orthonormal basis $(\psi _u ^n)_{n \in \N}$ such that $\mathcal{H}_u \psi _u ^n = \lambda _u ^n \psi _u ^n, n \in \N$, $\lambda _u ^0 = 0$, $\lambda _u ^1 >0$. 
\end{lemma}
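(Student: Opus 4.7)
The plan is to reduce the statement to Theorem XIII.67 of \cite{ReedSimon78} by checking that $\mathcal{H}_u$ fits its hypotheses, and then to identify the bottom of the spectrum using the factorization formula already written down just before the lemma. First I would introduce the quadratic form
\[
q_u(g) = \intv{\bigl(\sigma |\nabla _v g|^2 + W_u(v)\, g^2 \bigr)}, \qquad W_u(v) := \frac{1}{4\sigma}|\nabla _v \Phiu |^2 - \frac{1}{2}\Delta _v \Phiu,
\]
on $C_c^\infty (\R^d)$. Since $W_u \in L^1_{\mathrm{loc}}(\R^d)$ and is bounded below by some $-C$, the shifted form $q_u + (C+1)\|\cdot\|_{L^2}^2$ is non-negative and closable, and its Friedrichs extension defines a unique self-adjoint operator, which coincides with $\mathcal{H}_u$ on the domain $D(\mathcal{H}_u)$.

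Second, I would establish that $\mathcal{H}_u$ has compact resolvent. This is where the coercivity assumption enters: for any $M>0$, the sublevel set $\{W_u \leq M\}$ is bounded, and Rellich--Kondrachov gives a compact embedding of the form domain into $L^2$ on that bounded set, while on its complement one controls the tail via $\|g\|_{L^2(\{W_u > M\})}^2 \leq q_u(g)/M$. A diagonal extraction turns any form-bounded sequence into a strongly $L^2$-convergent one, so $(\mathcal{H}_u + C + 1)^{-1}$ is compact and self-adjoint; shifting back shows $\mathcal{H}_u^{-1}$ is compact and self-adjoint once $0$ is in the resolvent set (which will follow a posteriori, up to working with a shift). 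The spectral theorem for compact self-adjoint operators then delivers the non-decreasing sequence $(\lambda_u^n)_{n\in\N}$ with $\lambda_u^n \to +\infty$ and the $L^2$-orthonormal basis $(\psi_u^n)_{n\in\N}$ of eigenfunctions.

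Third, I identify $\lambda_u^0 = 0$. The factorization
\[
\mathcal{H}_u g = - \frac{\sigma}{\sqrt{M_u}} \Divv \!\left ( M_u \nabla _v \!\left ( \frac{g}{\sqrt{M_u}}\right )\right )
\]
evaluated at $g = \sqrt{M_u}$ yields $\mathcal{H}_u \sqrt{M_u} = 0$, and $\sqrt{M_u} \in L^2(\R^d)$ since $\intv{M_u} = 1$. Because $\sqrt{M_u} > 0$ everywhere, this candidate is the Perron--Frobenius ground state, so $\lambda_u^0 = 0$. To upgrade to the strict inequality $\lambda_u^1 > 0$, I would invoke simplicity of the ground state for Schrödinger operators with confining potentials on the connected domain $\R^d$: the associated heat semigroup $e^{-t\mathcal{H}_u}$ is positivity preserving and irreducible, so any $L^2$ eigenfunction at the bottom of the spectrum is a constant multiple of $\sqrt{M_u}$.

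I expect the main obstacle to be the simplicity of the ground state, i.e.\ the strict inequality $\lambda_u^1 > 0$: the existence of a ground state and the non-negativity $\lambda_u^1 \geq 0$ are immediate from the abstract spectral/compactness machinery, whereas ruling out multiplicity requires the positivity-preservation argument sketched above (or an equivalent Poincar\'e--Wirtinger inequality against $M_u$). Everything else is a verification of hypotheses in the cited Reed--Simon result.
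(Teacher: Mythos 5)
The paper states this lemma without proof, leaning entirely on the citation to Theorem XIII.67 of Reed--Simon; your proposal is a correct reconstruction of the argument that citation supports --- Friedrichs extension, compactness of the resolvent from coercivity of $W_u$, the spectral theorem for compact self-adjoint operators, and identification of $\sqrt{M_u}$ as the zero-energy eigenfunction. Two small remarks. First, for the spectral gap $\lambda_u^1>0$ the Perron--Frobenius/positivity-improving machinery is overkill here: the factorization already gives $(g,\mathcal{H}_u g)_{L^2}=\sigma\intv{M_u\,|\nabla_v(g/\sqrt{M_u})|^2}$, which vanishes only when $g/\sqrt{M_u}$ is constant on $\R^d$, so $\ker\mathcal{H}_u=\R\sqrt{M_u}$ is one-dimensional and discreteness of the spectrum then forces $\lambda_u^1>0$ --- and this spectral gap is precisely what the paper uses immediately afterward to get the Poincar\'e inequality \eqref{Equ22}. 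Second, you are right to flag the shift: since $\lambda_u^0=0$, the operator $\mathcal{H}_u$ itself is not invertible, and the literal assertion in the lemma that ``$\mathcal{H}_u^{-1}$ is compact'' must be read as compactness of $(\mathcal{H}_u+c)^{-1}$ for some $c>0$, or of the inverse restricted to $(\ker\mathcal{H}_u)^\perp$; this is a wrinkle in the paper's phrasing, not a gap in your argument.
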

Therefore, under the hypotheses in Lemma \ref{RSVol4}, for any $u \in \R^d$ there is $\lambda _u >0$ such that for any $\chi \in \homu$ we have
\begin{equation}
\label{Equ22}
\sigma \intv{|\nabla _v \chi |^2 M_u (v)} \geq \lambda _u \intv{\left |\chi (v) - \intvp{\chi (\vp) M_u (\vp)}\right |^2 M_u (v)}.
\end{equation}
The fluid equations are obtained by taking the scalar product of \eqref{Equ21} with elements in the kernel of the (formal) adjoint of $\calL _f$, that is with functions $\psi = \psi (v)$ such that 
\[
\intv{(\calL _f g)(v) \psi (v)} = 0,\;\;\mbox{ for any function } \;g = g(v),
\]
see also \cite{BarGolLevI93,BarGolLevII93,BosDCDS15,BosIHP15,Lev93,Lev96}. For example, $\psi = 1$ belongs to the kernel of $\calL_f ^\star$
\[
\intv{(\calL_f g )(v)} = \intv{\lime \frac{Q(f + \eps g) - Q(f)}{\eps}} = \lime \frac{1}{\eps} \intv{\{Q(f + \eps g) - Q(f)\}} = 0,
\]
and we obtain the continuity equation \eqref{Equ51}
\begin{equation*}
\partial _t \intv{f} + \Divx \intv{f v } = \intv{\calL_f (f^1)} = 0.
\end{equation*}
In the sequel we determine the formal adjoint of the linearization of the collision operator $Q$ around its equilibria.
\begin{pro}
\label{Linear}
Let $f = f(v)$ be an equilibrium with non vanishing mean velocity
\[
f = \rho M_u,\;\;\rho = \rho[f],\;\;u = |u|\Omega [f],\;\;|u| = l(\sigma),\;\;0 < \sigma < \sigma _0.
\]
\begin{enumerate}
\item
The linearization $\calL _f = \mathrm{d} Q_f$ is given by
\[
\calL _f g = \Divv \left \{ \sigma \nabla _v g + g \nabla _v \Phiu - M_u \intvp{(\vp - u) g(\vp)} \right \}.
\] 
\item
The formal adjoint of $\calL _f$ is
\begin{equation*}
\calL _f ^\star \psi = \sigma \frac{\Divv (M_u \nabla _v \psi)}{M_u} + (v - u)  \cdot W[\psi],\;\;W[\psi] := \intv{M_u (v) \nabla _v \psi }.
\end{equation*}
\item
We have the identity
\[
\calL _f (f(v-u)) = \sigma \nabla _v f - \Divv \left ( f \calmu \right ),\;\;\calmu := \intvp{M_u (\vp) (\vp - u) \otimes ( \vp - u)}.
\]
\end{enumerate}
\end{pro}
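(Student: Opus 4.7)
The plan is to establish the three assertions by direct computation, leveraging two structural facts recorded earlier in the excerpt: the Fokker--Planck factorization
\[
\sigma \nabla _v g + g \nabla _v \Phiu = \sigma M_u \nabla _v (g/M_u),
\]
and the equilibrium identity $\sigma \nabla _v f + f \nabla _v \Phiu = 0$, which follows from $f = \rho M_u$ together with $\nabla _v M_u = - M_u \nabla _v \Phiu / \sigma$.

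For (1), I would compute the G\^ateaux derivative of $Q$ at $f$ in the direction $g$. The only nonlinear dependence on the argument is through $u[\cdot]$; the quotient rule applied to $u[h] = \intv{h v}/\intv{h}$ yields
\[
\frac{d}{d\eps} u[f + \eps g] \Big |_{\eps = 0} = \frac{1}{\rho}\intvp{(\vp - u) g(\vp)}.
\]
Differentiating each term in $Q(f + \eps g)$ at $\eps = 0$ and collecting produces
\[
\calL _f g = \Divv \left \{ \sigma \nabla _v g + g(v-u) + g \nabla _v V(|\cdot|) - M_u \intvp{(\vp - u) g(\vp)} \right \},
\]
which coincides with the stated formula upon recognizing $(v-u) + \nabla _v V(|v|) = \nabla _v \Phiu (v)$.

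For (2), I would identify $\calL _f ^\star$ by evaluating $\intv{\psi \, \calL _f g}$. The local part of $\calL _f g$ is $\sigma \Divv (M_u \nabla _v (g/M_u))$ by the factorization; two integrations by parts transfer it onto $\psi$ and produce $\intv{g \cdot \sigma \Divv (M_u \nabla _v \psi)/M_u}$. The nonlocal contribution pairs against $\psi$ and, after one integration by parts combined with the fact that $\intvp{(\vp - u) g(\vp)}$ is independent of $v$ and hence factors out, becomes $\intv{g(v)(v-u) \cdot W[\psi]}$ with $W[\psi] = \intv{M_u \nabla _v \psi}$. Adding the two pieces gives the asserted formula for $\calL _f ^\star$.

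For (3), apply $\calL _f$ componentwise to the vector field $h(v) = f(v)(v-u)$. Setting $h_i = f(v_i - u_i)$, a direct expansion combined with the equilibrium identity yields
\[
\sigma \nabla _v h_i + h_i \nabla _v \Phiu = (v_i - u_i)(\sigma \nabla _v f + f \nabla _v \Phiu ) + \sigma f \, e_i = \sigma f \, e_i,
\]
while the nonlocal piece evaluates to $\rho M_u (\calmu)_{:,i}$ since $\calmu$ is a constant symmetric matrix independent of $v$. Hence the $i$-th component of $\calL _f h$ reads $\sigma \partial _{v_i} f - \Divv \{\rho M_u (\calmu)_{:,i}\} = \sigma (\nabla _v f)_i - (\calmu \nabla _v f)_i$, which reassembles into $\sigma \nabla _v f - \Divv (f \calmu)$ using the symmetry of $\calmu$ and $\nabla _v f = \rho \nabla _v M_u$. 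The only nontrivial point in the whole proposition is the bookkeeping of vector/matrix indices and the cancellation triggered by the equilibrium relation in this last step; nothing deeper arises beyond the two identities recalled at the outset.
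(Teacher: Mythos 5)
Your proposal is correct and follows essentially the same route as the paper: compute the G\^{a}teaux derivative using the quotient rule for $u[f+\eps g]$, identify the adjoint via integration by parts, and exploit the equilibrium identity $\sigma \nabla_v f + f\nabla_v\Phiu = 0$ for the third identity. The only (cosmetic) divergence is in part (2): you carry the local piece through in the weighted-Laplacian form $\sigma\Divv(M_u\nabla_v(g/M_u))$ and land directly on $\sigma\Divv(M_u\nabla_v\psi)/M_u$ after two integrations by parts, whereas the paper first produces $\sigma\Delta_v\psi - \nabla_v\psi\cdot\nabla_v\Phiu$ and then rewrites it as $\sigma\Divv(M_u\nabla_v\psi)/M_u$; both yield the same operator and neither saves real work.
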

\begin{proof}$\;$\\
1. We have
\[
\calL _f g  = \frac{\md}{\md s} \Big |_{s= 0} Q(f+sg) = \Divv \left \{\sigma \nabla _v g + g \nabla _v \Phiu  - f \frac{\md}{\md s}\Big|_{s= 0} u[f+sg]  \right \}
\]
and
\[
\frac{\md}{\md s}\Big|_{s= 0}u [f+sg] = \frac{\intv{\;(v - u[f])g(v)}}{\intv{f(v)} } .
\]
Therefore we obtain
\[
\calL _f g = \Divv \left \{ \sigma \nabla _v g + g \nabla _v \Phiu - - M_u \intvp{\;(\vp - u[f])g(\vp)} \right \}. 
\]
2. We have
\begin{align*}
& \intv{(\calL _f g) (v)\psi(v)  }  = -  \intv{\left \{ \sigma \nabla _v g + g \nabla _v \Phiu - M_u (v) \intvp{\;(\vp - u[f]) g(\vp) }\right \}\cdot \nabla _v \psi }\\
& = \intv{g (v) \left (  \sigma \Divv \nabla _v \psi - \nabla _v \psi \cdot \nabla _v \Phiu \right ) } + \intvp{g(\vp) ( \vp - u[f]) }\cdot \intv{M_u (v) \nabla _v \psi} \\
& = \intv{g(v) \left (\sigma \Divv \nabla _v \psi - \nabla _v \psi \cdot \nabla _v \Phiu + (v - u[f]) \cdot W[\psi]  \right ) }
\end{align*}
implying 
\[
\calL _f ^\star \psi = \sigma \frac{\Divv (M_u \nabla _v \psi)}{M_u} + (v - u[f])  \cdot W[\psi].
\]
3. For any $i \in \{1,...,d\}$ we have
\begin{align*}
\calL _f (f(v-u)_i)& = \Divv \left [(v - u)_i (\underbrace{\sigma \nabla _v f + f \nabla _v \Phiu}_{= 0}) + \sigma f e_i - M_u \intvp{(\vp - u)_i (\vp - u) f(\vp)} \right ] \\
& = \sigma \partial _{v_i} f - \Divv\left (f \intvp{(\vp - u ) \otimes (\vp - u) M_u (\vp) }   \right ) _i
\end{align*}
and therefore
\begin{align*}
\calL _f (f(v-u)) &  = \sigma \nabla _v f - \Divv ( f \calmu ).
\end{align*}
\end{proof}
We identify now the kernel of $\calL _f ^\star$.
\begin{lemma}
\label{AdjointRes}
Let $f = \rho M_u >0$ be an equilibrium with non vanishing mean velocity. The following statements are equivalent
\begin{enumerate}
\item
The function $\psi = \psi (v)$ belongs to $\ker \calL _f ^\star$.

\item
The function $\psi = \psi (v)$ satisfies
\begin{equation}
\label{Equ24}
\sigma \frac{\Divv (M_u \nabla _v \psi)}{M_u (v)} + (v - u ) \cdot W = 0
\end{equation}
for some vector $W \in \ker ( \calmu - \sigma I_d)$.
\end{enumerate}
Moreover, the linear map $W : \ker \calL _f ^\star \to \ker ( \calmu - \sigma I_d)$, defined by $W[\psi] = \intv{M_u (v) \nabla _v \psi }$ induces an isomorphism between the vector spaces $\ker \calL _f ^\star /\ker W$ and $\ker ( \calmu - \sigma I_d)$, where $\ker W$ is the set of constant functions. 
\end{lemma}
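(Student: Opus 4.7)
The plan is to handle the equivalence (1)$\Leftrightarrow$(2) by a short integration-by-parts trick against test functions $v_i - u_i$, and then get the isomorphism statement by combining an energy/Poincaré argument for injectivity with a Lax--Milgram construction for surjectivity.

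For (1)$\Rightarrow$(2), I start from the formula for $\calL_f^\star \psi$ established in Proposition \ref{Linear}. If $\calL_f^\star\psi = 0$, the equation \eqref{Equ24} is immediate with $W = W[\psi]$. The only content is to show $W[\psi]\in\ker(\calmu - \sigma I_d)$. For this, I multiply \eqref{Equ24} by $M_u(v_i - u_i)$ and integrate. Integration by parts on the diffusion term produces $-\sigma\intv{M_u \partial_{v_i}\psi} = -\sigma (W[\psi])_i$, while the second term contributes $\intv{(v_i-u_i)(v-u)_j M_u}(W[\psi])_j = (\calmu W[\psi])_i$, by the very definition of $\calmu$. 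Hence $\calmu W[\psi] = \sigma W[\psi]$. For the converse (2)$\Rightarrow$(1), the same multiplication-and-integration on \eqref{Equ24} with a generic $W\in\ker(\calmu - \sigma I_d)$ yields $\calmu W = \sigma W[\psi]$. But by hypothesis $\calmu W = \sigma W$, so $W[\psi] = W$, and plugging this back into \eqref{Equ24} gives exactly $\calL_f^\star\psi = 0$.

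For the isomorphism statement, the map $\psi\mapsto W[\psi]$ is well defined from $\ker\calL_f^\star$ to $\ker(\calmu - \sigma I_d)$ by the equivalence above. To see that its kernel consists only of constants, suppose $\psi\in\ker\calL_f^\star$ and $W[\psi]=0$. Then \eqref{Equ24} reduces to $\Divv(M_u \nabla_v \psi)=0$. Multiplying by $\psi$, integrating and using the Poincaré inequality \eqref{Equ22} gives $\intv{|\nabla_v\psi|^2 M_u} = 0$, so $\psi$ is constant; conversely any constant manifestly lies in $\ker\calL_f^\star \cap \ker W$.

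For surjectivity, given $W\in\ker(\calmu - \sigma I_d)$, I look for $\chi\in\homu$ solving
\[
-\sigma \Divv(M_u \nabla_v \chi) = M_u (v - u)\cdot W
\]
weakly. The compatibility condition $\intv{M_u (v-u)\cdot W} = 0$ holds because $M_u$ is an equilibrium, hence $\intv{M_u(v-u)} = 0$. Lax--Milgram applied to the bilinear form $a(\chi,\phi) = \sigma\intv{M_u \nabla_v \chi\cdot\nabla_v\phi}$ on the quotient $\homu/\R$ — coercive thanks to \eqref{Equ22} — provides a unique solution up to additive constants. Testing the equation again against $v_i - u_i$ exactly as above gives $\sigma W[\chi] = \calmu W = \sigma W$, so $W[\chi] = W$ and the equivalence from the first part ensures $\chi\in\ker\calL_f^\star$. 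The main delicate point is the Lax--Milgram step: one needs the right-hand side to define a continuous linear form on $\homu/\R$, which follows from finiteness of the moments of $M_u$ established in Section \ref{PropEqui}, and coercivity, which is precisely \eqref{Equ22}; the rest is routine integration by parts.
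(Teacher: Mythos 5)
Your argument is correct, and the core equivalence $(1)\Leftrightarrow(2)$ follows essentially the same lines as the paper's. The one cosmetic difference in $(1)\Rightarrow(2)$: you test the explicit formula for $\calL_f^\star\psi$ directly against $M_u(v-u)$ and integrate by parts, whereas the paper routes through the duality pairing $\intv{\calL_f^\star\psi\, f(v-u)} = \intv{\psi\,\calL_f(f(v-u))}$ together with the prepared identity $\calL_f(f(v-u)) = \sigma\nabla_v f - \Divv(f\,\calmu)$ from Proposition \ref{Linear}(3). These are the same computation up to the constant $\rho$ and which side of the pairing absorbs the integration by parts; your version avoids invoking the third item of Proposition \ref{Linear}, which is slightly more self-contained. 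Your $(2)\Rightarrow(1)$ is identical to the paper's.

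For the isomorphism, you supply in full an argument the paper only sketches here and completes in Section \ref{Invariants}: injectivity modulo constants by the energy identity, and surjectivity by Lax--Milgram on the mean-zero subspace (your $\homu/\R$ is the same as the paper's $\thomu$) with the compatibility condition $\intv{M_u(v-u)\cdot W}=0$ coming from the equilibrium property of $M_u$, then testing against $v-u$ to recover $W[\chi]=W$. All of this matches the paper's Section \ref{Invariants}. One small remark: in the injectivity step the Poincar\'e inequality is not actually needed --- multiplying $\Divv(M_u\nabla_v\psi)=0$ by $\psi$ and integrating by parts already gives $\intv{M_u|\nabla_v\psi|^2}=0$, hence $\nabla_v\psi=0$ a.e.\ and $\psi$ constant, since $M_u>0$; invoking \eqref{Equ22} there is harmless but superfluous.
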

\begin{proof} $\;$\\
1.$\implies$2. Let $\psi$ be an element of $\ker \calL _f ^\star$.  By the last statement in Proposition \ref{Linear} we deduce 
\begin{align*}
0 & = \intv{\calL _f ^\star \psi  \; f (v - u)} = \intv{\psi (v) \calL _f (f(v-u))} \\
& = \intv{\psi (v) \left [ \sigma \nabla _v f - \Divv  ( f \calmu )  \right ] }\\
& = - \sigma \intv{f(v) \nabla _v \psi } + \calmu \intv{f(v) \nabla _v \psi }\\
& = \rho ( \calmu - \sigma I_d ) W[\psi].
\end{align*}
As $\rho >0$ we deduce that $W[\psi] \in \ker ( \calmu - \sigma I_d)$ and by the second statement in Proposition \ref{Linear} it comes that
\[
\sigma \frac{\Divv (M_u \nabla _v \psi)}{M_u (v)} + (v - u ) \cdot W = 0,\;\;W = W[\psi] \in \ker ( \calmu - \sigma I_d).
\]
\noindent
2.$\implies$1. Let $\psi$ be a function satisfying \eqref{Equ24} for some vector $W \in \ker (\calmu - \sigma I_d)$. Multiplying by $M_u(v) (v-u)$ and integrating with respect to $v$ yields 
\[
- \sigma \intv{M_u (v) \nabla _v \psi } +\calmu W = 0.
\]
As we know that $W \in \ker (\calmu - \sigma I_d)$, we deduce that $W = W[\psi]$, implying that $\psi$ belongs to $\ker \calL _f ^\star$
\[
\calL _f ^\star \psi = \sigma \frac{\Divv (M_u \nabla _v \psi)}{M_u} + (v- u) \cdot W[\psi] = \sigma \frac{\Divv (M_u \nabla _v \psi)}{M_u} +( v - u) \cdot W = 0.
\]
\end{proof}
We focus on the eigenspace $\ker ( \calmu - \sigma I_d)$.
\begin{lemma} 
\label{Spectral}
Let $M_u$ be an equilibrium with non vanishing mean velocity. Then we have
\[
\calmu - \sigma I_d = \sigma ^2 \frac{\partial _{ll} ^2 Z(\sigma, l(\sigma))}{Z(\sigma, l(\sigma))} \Omega \otimes \Omega \leq 0,\;\;\Omega = \frac{u}{|u|}. 
\]
In particular $(\R u ) ^\perp \subset \ker (\calmu - \sigma  I_d)$ with equality iff $\partial _{ll} ^2 Z(\sigma, l(\sigma)) \neq 0$. 
\end{lemma}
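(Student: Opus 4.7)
The plan is to recognize $\calmu - \sigma I_d$ as (up to the factor $\sigma^2/Z$) the Hessian of $Z(\sigma, u)$ viewed as a function of the \emph{vector} variable $u \in \R^d$, and then to exploit the radial dependence $Z(\sigma, u) = Z(\sigma, |u|)$ to collapse this Hessian onto the $\Omega \otimes \Omega$ direction at the equilibrium $|u| = l(\sigma)$.

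First I would differentiate $Z(\sigma, u) = \intv{\exp(-\Phiu(v)/\sigma)}$ twice under the integral sign with respect to the Cartesian components $u_j$. Using $\partial_{u_k} \Phiu = -(v_k - u_k)$, a direct computation yields
\[
\partial_{u_j} \partial_{u_k} Z(\sigma, u) = \frac{Z(\sigma, |u|)}{\sigma^2}\bigl[(\calmu)_{jk} - \sigma\, \delta_{jk}\bigr],
\]
so that $\calmu - \sigma I_d = (\sigma^2/Z)\,\nabla_u^2 Z$. All the moments needed are finite thanks to the decay hypothesis \eqref{Equ11}, which was already exploited right after \eqref{Equ11} and in Remark \ref{Modulus}; the only (mild) technical point is the justification of differentiation under the integral sign, which is essentially identical to the argument used there to derive $\partial_l Z$ and $\partial_{ll}^2 Z$.

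Next, by statement (1) of Proposition \ref{Current}, $Z$ depends on $u$ only through $|u|$, so the chain rule applied to $u \mapsto |u| \mapsto Z(\sigma, |u|)$ gives
\[
\nabla_u^2 Z(\sigma, |u|) = \frac{\partial_l Z(\sigma, |u|)}{|u|}\,(I_d - \Omega \otimes \Omega) + \partial_{ll}^2 Z(\sigma, |u|)\,\Omega \otimes \Omega.
\]
Since $M_u$ is an equilibrium with $|u| = l(\sigma)$, statement (3) of Proposition \ref{Current} gives $\partial_l Z(\sigma, l(\sigma)) = 0$, so the transverse term vanishes and comparison with the previous identity produces the claimed formula.

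Finally, the inequality $\partial_{ll}^2 Z(\sigma, l(\sigma)) \leq 0$ for $0 < \sigma < \sigma_0$ is exactly the content of Proposition \ref{CriticalDiffusion}, hence $\calmu - \sigma I_d \leq 0$. Since this matrix is a non-positive scalar multiple of the rank-one projector $\Omega \otimes \Omega$, its kernel automatically contains $(\R u)^\perp = (\R\Omega)^\perp$, with equality iff the scalar coefficient is nonzero, i.e., iff $\partial_{ll}^2 Z(\sigma, l(\sigma)) \neq 0$. I do not expect any genuine obstacle beyond the routine differentiation-under-the-integral step above.
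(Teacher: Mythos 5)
Your proposal is correct, but it takes a genuinely different route from the paper's. The paper computes the entries of $\calmu$ entry by entry: it decomposes $v-u$ in the orthonormal frame $\{\Omega, E_1,\dots,E_{d-1}\}$, kills the off-diagonal terms by reflections $I_d - 2E_j\otimes E_j$ and rotations $\calO_{ij}\in\calTu$, and then establishes the transverse diagonal value $\calmu E_i\cdot E_i = \sigma$ by a separate integration-by-parts trick (multiplying $\sigma\nabla_v M_u + M_u\nabla_v\Phiu = 0$ by $(|v|^2 I_d - v\otimes v)\Omega$), before finally invoking Remark \ref{Modulus} for the longitudinal term. You instead observe at the outset that
\[
\partial_{u_j}\partial_{u_k} Z(\sigma,u) = \frac{Z}{\sigma^2}\bigl[(\calmu)_{jk} - \sigma\delta_{jk}\bigr],
\]
so that $\calmu - \sigma I_d = (\sigma^2/Z)\,\nabla_u^2 Z$, and then let the radial structure $Z = Z(\sigma,|u|)$ do all the algebra: the Hessian of a radial function is $\frac{\partial_l Z}{|u|}(I_d-\Omega\otimes\Omega) + \partial_{ll}^2 Z\,\Omega\otimes\Omega$, and the transverse block drops out because $|u| = l(\sigma)$ is a critical point of $Z(\sigma,\cdot)$. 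This is more conceptual and shorter: in particular, the identity $\intv{\frac{|v|^2-(v\cdot\Omega)^2}{d-1}M_u(v)} = \sigma$, which the paper proves by hand, is recovered for free as the vanishing of the transverse Hessian coefficient. What the paper's more pedestrian computation buys is explicit information about $\calmu$ away from the criticality condition (i.e.\ the off-diagonal vanishing and the transverse value hold for any $u$, not only at $|u| = l(\sigma)$), which is not needed for the Lemma itself but is in the same spirit as the symmetry arguments used elsewhere in Section \ref{PropEqui}. The remaining points in your write-up — justifying differentiation under the integral sign via the moment bounds following \eqref{Equ11}, and the sign $\partial_{ll}^2 Z(\sigma,l(\sigma))\le 0$ from $l(\sigma)$ being a maximum (or from Proposition \ref{CriticalDiffusion}) — are handled the same way as in the paper.
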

\begin{proof}
Let us consider $\{E_1, \ldots, E_{d-1}\}$ an orthonormal basis of $(\R\Omega) ^\perp$. By using the decomposition 
\[
v - u = ( \Omega \otimes \Omega ) (v-u) + \sumi ( E_i \otimes E_i ) (v-u) = ( \Omega \otimes \Omega ) (v-u) + \sumi ( E_i \otimes E_i ) v
\]
we obtain
\begin{align*}
\calmu & = \intv{\left [  \Omega \otimes \Omega  (v-u) + \sumi  E_i \otimes E_i  v
\right ]\otimes \left [ \Omega \otimes \Omega (v-u) + \sumj  E_j \otimes E_j  v \right ] M_u (v) } \\
& = ( \calmu \Omega \cdot \Omega ) \Omega \otimes \Omega + \sumi (\calmu E_i \cdot E_i ) E_i \otimes E_i
\end{align*}
since we have
\begin{equation}
\label{Equ26}
\calmu \Omega \cdot E_j = 0,\;\;1 \leq j \leq d-1
\end{equation}
and
\begin{equation}
\label{Equ27}
\calmu E_i \cdot E_j = \delta _{ij} \intv{\frac{|v|^2 - (v \cdot \Omega) ^2 }{d-1} M_u (v)},\;\;1\leq i, j \leq d-1. 
\end{equation}
The formula \eqref{Equ26} comes by the change of variable $v = (I_d - 2 E_j \otimes E_j)\vp$, by noticing that $I_d - 2 E_j \otimes E_j \in \calTu$, for any $1 \leq j \leq d-1$
\begin{align*}
\calmu \Omega \cdot E_j & = \intv{\Omega \cdot (v - u) (E_j \cdot v) M_u (v)}\\
& = - \intvp{\Omega \cdot (\vp - u) (E_j \cdot \vp) M_u (\vp) }\\
&= - \calmu \Omega \cdot E_j = 0,\;\;1 \leq j \leq d-1.
\end{align*}
For the formula \eqref{Equ27} with $i \neq j$ we use the rotation $\calO _{ij} \in \calTu$
\[
v = \calO _{ij}\vp,\;\;\calO _{ij} = \Omega \otimes \Omega + \sum_{k \notin\{i,j\}}E_k \otimes E_k + E_i \otimes E_j - E_j \otimes E_i.
\]
Notice that 
\[
(E_i \cdot v) (E_j \cdot v) = - (E_j \cdot \vp) (E_i \cdot \vp),\;\;(E_i \cdot v) ^2 = (E_j \cdot \vp)^2
\]
and therefore,
\begin{align*}
\calmu E_i \cdot E_j & = \intv{(E_i \cdot v) (E_j \cdot v) M_u (v)} \\
& = - \intvp{(E_j \cdot \vp) (E_i \cdot \vp) M_u (\vp)} \\
& = - \calmu E_i \cdot E_j = 0,\;\;1\leq i\neq j \leq d-1
\end{align*}
and
\begin{align*}
\calmu E_i \cdot E_i & = \intv{(E_i \cdot v)^2  M_u (v)} \\
& =  \intvp{(E_j \cdot \vp)^2  M_u (\vp)} \\
& = \calmu E_j \cdot E_j = 0,\;\;1\leq i, j \leq d-1 .
\end{align*}
As $\sumi (E_i \cdot v)^2 = |v|^2 - (v \cdot \Omega) ^2$, we obtain
\[
\intv{(E_i \cdot v) ^2 M_u (v)} = \intv{\frac{|v|^2 - (v \cdot \Omega)^2}{d-1} M_u (v) },\;\;1 \leq i \leq d-1
\]
and
\[
\calmu = \intv{( \; (v-u) \cdot \Omega )^2 M_u (v) } \;\Omega \otimes \Omega + \intv{\frac{|v|^2 - (v \cdot \Omega)^2}{d-1} M_u (v) }\; (I_d - \Omega \otimes \Omega).
\]
We claim that $\intv{\frac{|v|^2 - (v \cdot \Omega)^2}{d-1} M_u (v) } = \sigma$. Multiplying $\sigma \nabla _v M_u + M_u (v) \nabla _v \Phiu = 0$ by $(|v|^2 I_d - v \otimes v ) \Omega$ we obtain
\[
\intv{\sigma \nabla _v M_u \cdot ( |v|^2 I_d - v \otimes v ) \Omega } + \intv{M_u (v) \nabla _v \Phiu \cdot ( |v|^2 I_d - v \otimes v ) \Omega } = 0.
\]
But we have
\[
\Divv [ ( |v|^2 I_d - v \otimes v ) \Omega]  = \Divv [|v|^2 \Omega - (v \cdot \Omega) v ] = -(d-1) (v \cdot \Omega) 
\]
and 
\[
\nabla _v \Phiu \cdot  ( |v|^2 I_d - v \otimes v ) \Omega = \left ( v - u + V^\prime (|v|) \frac{v}{|v|} \right ) \cdot  ( |v|^2 I_d - v \otimes v ) \Omega = - (|v|^2 - (v \cdot \Omega) ^2 ) |u|.
\]
We deduce that 
\[
(d-1) \sigma \underbrace{\intv{(v \cdot \Omega) M_u (v)}}_{=|u|} - |u| \intv{[|v|^2 - (v \cdot \Omega) ^2 ] M_u (v)} = 0
\]
and by taking into account that $|u| = \intv{(v \cdot \Omega) M_u (v)}$, we obtain
\[
\intv{\frac{|v|^2 - (v \cdot \Omega)^2}{d-1} M_u (v) } = \sigma.
\]
By Remark \ref{Modulus}, we know that 
\[
\sigma ^2 \frac{\partial _{ll} ^2 Z(\sigma, l(\sigma))}{Z(\sigma, l(\sigma))} = \intv{M_u (v) \{ ((v-u) \cdot \Omega ) ^2 - \sigma \}} = \intv{M_u (v) ((v-u) \cdot \Omega ) ^2} - \sigma
\]
and finally we have
\[
\calmu - \sigma I_d = \left ( \intv{((v-u) \cdot \Omega ) ^2 M_u (v)} - \sigma \right ) \Omega \otimes \Omega = \sigma ^2 \frac{\partial _{ll} ^2 Z(\sigma, l(\sigma))}{Z(\sigma, l(\sigma))} \;\Omega \otimes \Omega.
\]
As $l(\sigma)$ is a maximum point of $Z(\sigma, \cdot)$, we have $\partial _{ll} ^2 Z(\sigma, l(\sigma)) \leq 0$ and therefore $\calmu \leq \sigma I_d$. 
\end{proof}

\section{The kernel of $\calL _f ^\star$}
\label{Invariants}

By Lemmas \ref{AdjointRes}, \ref{Spectral}, any solution of \eqref{Equ24} with $W \in (\R u ) ^\perp$ belongs to the kernel of the formal adjoint $\calL_f ^\star$. Generally we will solve the elliptic problem
\begin{equation}
\label{Equ31} 
- \sigma \Divv (M_u \nabla _v \psi ) = (v - u) \cdot W M_u (v),\;\;v \in \R^d
\end{equation}
for any $W \in \R^d$. We consider the continuous bilinear symmetric form $a_u :\homu \times \homu \to \R$ defined by
\[
a_u (\varphi, \theta) = \sigma \intv{\nabla _v \varphi \cdot \nabla _v \theta M_u (v)},\;\;\varphi, \theta \in \homu
\]
and the linear form $L : \homu \to \R, L(\theta) = \intv{\theta (v) (v-u ) \cdot W M_u (v) },\; \theta \in \homu$. Notice that under the hypothesis \eqref{Equ11} $L$ is bounded on $\homu$
\begin{align*}
\intv{|\theta (v) (v - u ) \cdot W |M_u } & \leq \left ( \intv{(\theta (v))^2 M_u } \right ) ^{1/2} \left ( \intv{(|v| + |u|) ^2 M_u }\right ) ^{1/2} |W|.
\end{align*}
We are looking for variational solutions of \eqref{Equ31} {\it i.e.,}
\begin{equation}
\label{Equ30}
\psi \in \homu \;\mbox{ and } a_u (\psi, \theta) = L(\theta)\;\;\mbox{ for any }\theta \in \homu.
\end{equation}
When taking $\theta = 1 \in \homu$, we obtain the following necessary condition for the solvability of \eqref{Equ31}
\begin{equation}
\label{Equ32}
L(1) = \intv{(v-u) \cdot W M_u (v) } = 0
\end{equation}
which is satisfied for any $W \in \R^d$, because $M_u$ has mean velocity $u$. It happens that \eqref{Equ32} also guarantees the solvability of \eqref{Equ31}. For that, it is enough to observe that the bilinear form $a_u$ is coercive on the Hilbert space $\thomu : = \{ \theta \in \homu\;:\; ((\theta, 1))_{M_u} = 0\}$. Indeed, for any $\theta \in \homu$ such that $((\theta, 1))_{M_u} = 0$, we have thanks to the Poincar\'e inequality \eqref{Equ22}
\[
\sigma \intv{|\nabla _v \theta|^2 M_u (v) } \geq \lambda _u \intv{(\theta (v))^2 M_u (v)},
\]
and therefore
\[
a_u (\theta, \theta) \geq  \frac{\lambda _u}{2} \intv{(\theta (v))^2 M_u (v)} + \frac{\sigma }{2} \intv{|\nabla _v \theta|^2 M_u (v)} \geq \frac{\min \{\sigma, \lambda _u\}}{2} \|\theta \|^2 _{M_u}.
\]
Thanks to Lax-Milgram lemma on the Hilbert space $\thomu$, there is a unique function $\psi \in \thomu$ such that
\begin{equation}
\label{Equ33} 
a_u (\psi, \tilde{\theta} ) = L(\tilde{\theta})\;\;\mbox{ for any } \tilde{\theta} \in \thomu. 
\end{equation}
The condition \eqref{Equ32} allows us to extend \eqref{Equ33} to $\homu$ (apply \eqref{Equ33} with $\tilde{\theta} = \theta - ((\theta, 1))_{M_u}$, for any $\theta \in \homu$). The uniqueness of the solution of \eqref{Equ33} implies the uniqueness, up to a constant, for the solution of \eqref{Equ30}. 

From now on, for any $W \in \R^d$, we denote by $\psi _W$ the unique solution of \eqref{Equ30}, verifying $\intv{\psi _W (v) M_u (v)} = 0$. Notice that $\psi _0 = 0$. The solution $\psi _W$ depends linearly on $W \in \R^d$. Let us introduce the Hilbert spaces
\[
\bltmu= \{\xi : \R^d \to \R^d\;\mbox{ measurable }, \sum _{i = 1} ^d \intv{(\xi _i (v))^2 M_u (v)} < +\infty\}
\]
\[
\bhomu= \{\xi : \R^d \to \R^d\;\mbox{ measurable }, \sum _{i = 1} ^d \intv{\{ (\xi _i (v))^2 + |\nabla _v \xi _i |^2\}M_u (v)} < +\infty\}
\]
endowed with the scalar product 
\[
(\xi, \eta)_{M_u} = \sum _{i = 1} ^d \intv{\xi _i (v) \eta _i (v) M_u (v)},\;\;\xi, \eta \in \bltmu
\]
\[
((\xi, \eta))_{M_u} = \sum _{i = 1} ^d \intv{\{ \xi _i (v) \eta _i (v) + \nabla _v \xi _i \cdot \nabla _v \eta _i \}M_u (v)},\;\;\xi, \eta \in \bhomu.
\]
We denote the induced norms by $|\xi |_{M_u} = (\xi, \xi ) _{M_u} ^{1/2}, \;\xi \in \bltmu$ and $\|\xi \|_{M_u} = ((\xi, \xi )) _{M_u} ^{1/2}, \;\xi \in \bhomu$. Obviously, a vector field $\xi = \xi (v)$ belongs to $\bhomu$ iff $\xi _i \in \homu$ for any $i \in \{1,...,d\}$ and we have
\[
\|\xi \|_{M_u}^2 = \sum _{i = 1} ^d \|\xi _i \|_{M_u} ^2. 
\]
Let us consider the closed subspace
\[
\bthomu = \{ \xi \in \bhomu\;:\;\intv{\xi (v) M_u (v)} = 0\}.
\]
Thanks to \eqref{Equ22}, for any $\xi \in \bhomu$ we have the inequality
\[
\sigma \sum _{i = 1} ^d \intv{|\nabla _v \xi _i |^2 M_u (v) } \geq \lambda _u \sum _{i = 1} ^d \intv{\left |\xi _i (v) - \intvp{\xi _i (\vp) M_u (\vp)}\right | ^ 2 M_u (v)}
\]
and therefore
\begin{align}
\label{Equ41} \sigma \sum _{i = 1} ^d \intv{|\nabla _v \xi _i |^2 M_u (v) } & \geq \frac{\min\{\sigma, \lambda _u \}}{2} \sum _{i = 1} ^d \intv{[(\xi _i (v))^2 + |\nabla _v \xi _i |^2 ] M_u (v)} \nonumber \\
& = \frac{\min\{\sigma, \lambda _u \}}{2}\|\xi \|^2 _{M_u},\;\;\xi \in \bthomu.
\end{align}
We introduce the continuous bilinear symmetric form ${\bf a} _u : \bhomu \times \bhomu \to \R$ defined by 
\[
{\bf a}_u (\xi, \eta) = \sigma \intv{\partial _v \xi : \partial _v \eta \; M_u (v)} = \sum _{i = 1} ^d a_u (\xi _i, \eta _i),\;\;\xi, \eta \in \bhomu
\]
and the linear form ${\bf L } : \bhomu \to \R$, ${\bf L }(\eta) = \intv{(v-u) \cdot \eta (v) M_u (v)}, \eta \in \bhomu$. Under the hypothesis \eqref{Equ11}, it is easily seen that ${\bf L}$ is bounded on $\bhomu$
\[
\intv{|(v - u) \cdot \eta (v)|M_u (v)} \leq \left ( \intv{(|v| + |u| ) ^2 M_u (v) } \right ) ^{1/2} \|\eta \|_{M_u},\;\;\eta \in \bhomu.
\]
\begin{pro}
There is a unique solution $F$ of the variational problem
\[
F \in \bthomu\;\mbox{ and }\; {\bf a}_u (F, \eta) = {\bf L } (\eta),\;\mbox{ for any } \eta \in \bhomu.
\]
For any $W \in \R^d$ we have $\psi _W (v) = F(v) \cdot W, v \in \R^d$. The vector field $F$ is left invariant by the family $\calTu$. 
\end{pro}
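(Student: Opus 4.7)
The plan is to apply the Lax-Milgram lemma to the symmetric bilinear form ${\bf a}_u$ on the closed subspace $\bthomu \subset \bhomu$. By the inequality \eqref{Equ41}, ${\bf a}_u$ is coercive on $\bthomu$ with constant $\min\{\sigma,\lambda_u\}/2$, and it is obviously continuous; ${\bf L}$ is bounded on $\bhomu$ by the estimate recorded just before the proposition. Lax-Milgram thus delivers a unique $F \in \bthomu$ satisfying ${\bf a}_u(F, \eta) = {\bf L}(\eta)$ for every $\eta \in \bthomu$. I then extend this identity to all $\eta \in \bhomu$ by decomposing $\eta = \tilde{\eta} + c$, where $c = \intvp{\eta(\vp) M_u(\vp)} \in \R^d$ is constant and $\tilde\eta \in \bthomu$. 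The gradient term picks up no contribution from $c$, and ${\bf L}(c) = c \cdot \intv{(v-u) M_u(v)} = 0$ since $u$ is the mean velocity of $M_u$, so the equation passes from $\tilde\eta$ to $\eta$.

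For the identification $\psi_W(v) = F(v) \cdot W$, I would test the vector variational problem with $\eta(v) = \theta(v)\, W$ for arbitrary $\theta \in \homu$ and fixed $W \in \R^d$. A direct calculation gives ${\bf a}_u(F, \theta W) = a_u(F \cdot W, \theta)$ and ${\bf L}(\theta W) = L(\theta)$, so the scalar function $v \mapsto F(v) \cdot W$ satisfies the scalar variational problem \eqref{Equ30}. Since moreover $\intv{(F \cdot W) M_u} = W \cdot \intv{F\, M_u} = 0$, the uniqueness already established for \eqref{Equ30} forces $\psi_W(v) = F(v) \cdot W$.

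The $\calTu$-invariance is proved by a uniqueness argument. Given $\calO \in \calTu$, I would set $G(v) = \calO F(\,^t\calO v)$ and show that $G$ solves the same variational problem as $F$, hence $G = F$. The change of variable $w = \,^t\calO v$, combined with the invariance $M_u(\calO w) = M_u(w)$ following from $\calO u = u$ and Remark \ref{Sym}, gives both $G \in \bthomu$ and $\partial_v G(\calO w) = \calO (\partial_w F)(w)\,^t\calO$. Transforming the test field through $\bar{\eta}(w) = \,^t\calO \eta(\calO w) \in \bhomu$, the orthogonality of $\calO$ makes the Frobenius product $\partial_v G : \partial_v \eta$ invariant, which yields ${\bf a}_u(G, \eta) = {\bf a}_u(F, \bar\eta)$ and similarly ${\bf L}(\bar\eta) = \intv{(v - \calO u) \cdot \eta(v) M_u(v)} = {\bf L}(\eta)$. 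The main obstacle is precisely in this last computation: one must transform vector fields covariantly as $\bar\eta(w) = \,^t\calO \eta(\calO w)$ rather than by mere precomposition, and the invariance ${\bf L}(\bar\eta) = {\bf L}(\eta)$ relies crucially on $\calO u = u$, the defining property that distinguishes $\calTu$ from the full orthogonal group $\calTz$.
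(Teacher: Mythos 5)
Your proposal is correct and follows essentially the same path as the paper: Lax--Milgram on $\bthomu$ via the Poincar\'e-based coercivity \eqref{Equ41}, extension of the variational identity from $\bthomu$ to $\bhomu$ by removing the $M_u$-mean (where ${\bf L}$ vanishes on constants because $u$ is the mean velocity), the test field $\eta=\theta W$ to identify $\psi_W = F\cdot W$ with the scalar problem \eqref{Equ30}, and the uniqueness argument showing $v\mapsto\calO F(\,^t\calO v)$ solves the same problem for each $\calO\in\calTu$. The change-of-variable computation for ${\bf a}_u$ and ${\bf L}$ with the covariantly transformed test field $\,^t\calO\eta(\calO\,\cdot)$ is exactly the paper's argument, so there is nothing to add.
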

\begin{proof}
The bilinear for ${\bf a}_u$ is coercive on $\bthomu$, thanks to \eqref{Equ41}
\[
{\bf a}_u (\xi, \xi) \geq \frac{\min \{\sigma, \lambda _u \}}{2} \|\xi \|_{M_u } ^2,\;\;\mbox{ for any } \xi \in \bthomu. 
\]
By Lax-Milgram lemma, applied on the Hilbert space $\bthomu$, there is a unique vector field $F \in \bthomu$ such that 
\[
{\bf a}_u (F, \eta) = {\bf L } (\eta),\;\mbox{ for any } \eta \in \bthomu.
\]
Actually, the above equality holds true for any $\eta \in \bhomu$
\begin{align*}
{\bf a}_u (F, \eta) & = \sum _{i = 1} ^d a_u (F_i, \eta _i) = \sum _{i = 1} ^d a_u (F_i, \eta_i - (\eta_i,1)_{M_u} ) \\
& = {\bf L} ( \eta _1 - (\eta _1,1)_{M_u},...,\eta_d - (\eta _d ,1) _{M_u}) \\
& = \sum _{i =1 } ^d \intv{(v_i - u_i) [ \eta _i (v) - (\eta_i, 1)_{M_u}] M_u (v)} \\
& =  \sum _{i =1 } ^d \intv{(v_i - u_i) \eta _i (v)  M_u (v)} \\
& = {\bf L} (\eta).
\end{align*}
It remains to check that for any $W \in \R^d$, $v \mapsto F(v) \cdot W$ solves \eqref{Equ33}, on $\homu$. Observe that $F \cdot W \in \thomu$. Notice also that for any $\theta \in \homu$ we have $\theta W \in \bhomu$ and
\begin{align*}
a_u (F\cdot W, \theta) & = \sigma \intv{\;^t \partial _v F W \cdot \nabla _v \theta M_u (v)}\\
& = \sigma \intv{\partial _v F : \partial _v (\theta W) M_u (v)}\\
& = {\bf a}_u (F, \theta W) = {\bf L} (\theta W) \\
& = \intv{(v-u) \cdot W \theta(v) M_u (v) }\\
& = L(\theta).
\end{align*}
Thank to the uniqueness we obtain $\psi _W (v) = F(v) \cdot W, v \in \R^d, W \in \R^d$. Consider now $\calO \in \calTu$. We are done if we prove that $v \to \calO F (\;^t \calO v)$ solves the same problem as $F$. Clearly we have
\[
\intv{|\calO F( \;^t \calO v ) |^2 M_u (v)} = \intvp{|F(\vp)|^2 M_u (\vp) } < +\infty
\]
\begin{align*}
\intv{\partial [ \calO F (\;^t \calO \;\cdot)] : \partial [ \calO F (\;^t \calO \;\cdot )] M_u (v) } & = \intv{\partial F (\;^t \calO \;\cdot ) : \partial F (\;^t \calO \;\cdot ) M_u (v)} \\
& = \intv{\partial F ( \;^t \calO v ) \;^t \calO : \partial F ( \;^t \calO v) \;^t \calO M_u (v) } \\
& = \intvp{\partial F (\vp) : \partial F (\vp) M_u (\vp) } < +\infty. 
\end{align*}
and
\[
\intv{\calO F (\;^t \calO v ) M_u (v)} = \calO \intvp{F(\vp) M_u (\calO \vp) } = 0
\]
saying that $v \mapsto \calO F(\;^t \calO v)$ belongs to $\bthomu$. For any $\eta \in \bhomu$ we have $^t \calO \eta (\calO \; \cdot ) \in \bhomu$ and 
\begin{align*}
{\bf a}_u (\calO F (\;^t \calO \; \cdot ), \eta) & = \sigma \intv{\partial ( \calO F (\;^t \calO \; \cdot )) : \partial \eta M_u (v)} \\
& = \sigma \intv{\calO \partial F (\;^t \calO v ) \;^t \calO : \partial \eta M_u (v)}\\
& = \sigma \intvp{\partial F (\vp) : \;^t \calO ( \partial \eta) (\calO \vp) \calO M_u (\calO \vp) }\\
& = \sigma \intvp{\partial F (\vp) : \partial ( \;^t O \eta (\calO \; \cdot ) ) (\vp) M_u (\vp)} \\
& = \intvp{(\vp - u) \cdot \;^t \calO \eta (\calO \vp) M_u (\vp)} \\
& = \intv{(v-u) \cdot \eta (v) M_u (v)} = {\bf L}(\eta).
\end{align*}
\end{proof}
The vector field $F$ expresses in terms of two functions which are left invariant by the family $\calTu$.
\begin{pro}
There is a function $\psi$, which is left invariant by the family $\calTu$, such that 
\[
F(v) = \psi (v) \frac{v - (v \cdot \Omega)\Omega}{\sqrt{|v|^2 - (v \cdot \Omega)^2 }} + \psi _\Omega (v) \Omega,\;\;v \in \R ^d \setminus (\R \Omega).
\]
\end{pro}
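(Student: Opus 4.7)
The plan is to exploit the symmetry group $\calTu$ twice: first to constrain the direction of $F(v)$ to a specific two-dimensional plane, then to identify the resulting scalar coefficients as $\calTu$-invariant functions, which by Remark \ref{Sym} depend only on $v\cdot\Omega$ and $|v|$.

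First I would fix $v \in \R^d \setminus (\R\Omega)$ and introduce the two-dimensional plane $P_v = \mathrm{span}\{\Omega, v_\perp\}$, where $v_\perp := v - (v\cdot \Omega)\Omega$ is nonzero precisely because $v \notin \R\Omega$. Consider the subgroup of $\calO \in \calTu$ satisfying $\calO v = v$; since $\calO\Omega = \Omega$ as well, such $\calO$ fix $P_v$ pointwise and act as arbitrary orthogonal transformations on the orthogonal complement $P_v^\perp$. Using the invariance $F(\;^t\calO v) = \;^t\calO F(v)$ and $\;^t\calO v = v$, one gets $F(v) = \;^t\calO F(v)$ for every such $\calO$. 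For $d \geq 3$, only vectors lying in $P_v$ are fixed by the full orthogonal group of $P_v^\perp$, so $F(v) \in P_v$; the case $d=2$ is trivial since $P_v = \R^d$. In either case we may decompose
\[
F(v) = \psi_\Omega(v)\,\Omega + \psi(v)\,\frac{v_\perp}{\sqrt{|v|^2 - (v\cdot\Omega)^2}},
\]
with $\psi_\Omega(v) := F(v)\cdot\Omega$ and $\psi(v) := F(v)\cdot v_\perp/|v_\perp|$.

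Second I would verify that $\psi$ and $\psi_\Omega$ are both $\calTu$-invariant. Writing the invariance of $F$ equivalently as $F(\calO v) = \calO F(v)$ for $\calO \in \calTu$, and using $\calO\Omega = \Omega$ to deduce $(\calO v)\cdot\Omega = v\cdot\Omega$ and $(\calO v)_\perp = \calO(v_\perp)$, a short computation yields $\psi_\Omega(\calO v) = \calO F(v) \cdot \Omega = F(v)\cdot \;^t\calO \Omega = \psi_\Omega(v)$ and similarly $\psi(\calO v) = \calO F(v) \cdot \calO v_\perp/|v_\perp| = \psi(v)$. By Remark \ref{Sym}, this is equivalent to saying that both $\psi$ and $\psi_\Omega$ depend only on the pair $(v\cdot\Omega, |v|)$, which is the content of the proposition.

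The main obstacle is the first step, namely the direction constraint $F(v) \in P_v$: one must be careful to use invariance under the stabilizer of $v$ inside $\calTu$ rather than the action that moves $v$, and to note the $d=2$ case separately since $P_v^\perp = \{0\}$ there makes the argument vacuous but still consistent. Once the pointwise direction is pinned down, the invariance of the scalar coefficients is a routine algebraic check that uses only $\calO\Omega = \Omega$ and the fact that $\calO$ is an isometry.
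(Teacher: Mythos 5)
Your proof is correct and follows essentially the same route as the paper: you constrain the direction of $F(v)$ to the plane spanned by $\Omega$ and $v_\perp$ by exploiting the stabilizer of $v$ in $\calTu$, then read off the $\calTu$-invariance of the scalar coefficients. The only cosmetic differences are that the paper first projects out $F' = (I_d - \Omega\otimes\Omega)F$ before running the argument, and it uses the single reflection $I_d - 2\,{}^\perp E \otimes {}^\perp E$ rather than invoking the full orthogonal group of $P_v^\perp$, but these are equivalent.
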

\begin{proof}
Obviously we have $F = (F \cdot \Omega) \Omega + F^\prime = \psi _\Omega \Omega + F^\prime$, with $F^\prime = (I_d - \Omega \otimes \Omega) F$. The vector field $F^\prime$ is orthogonal to $\Omega$ and is left invariant by the family $\calTu$
\begin{align*}
F^\prime (\;^t \calO v) & = F (\;^t \calO v ) - ( F (\;^t \calO v )\cdot \Omega) \Omega  = \;^t \calO F(v) - ( \;^t \calO F(v) \cdot \Omega) \Omega \\
& = \;^t \calO (F(v) - (F(v) \cdot \Omega) \Omega ) = \;^t \calO F^\prime (v),\;\;v \in \R^d.
\end{align*}
We claim that $F^\prime (v)$ is parallel to the orthogonal projection of $v$ over $(\R \Omega) ^\perp$. Indeed, for any $v \in \R^d \setminus (\R \Omega)$, let us consider
\[
E(v) = \frac{(I_d - \Omega \otimes \Omega)v}{\sqrt{|v|^2 - (v \cdot \Omega)^2}}.
\]
When $d = 2$, since  $E(v)$ and $F^\prime (v)$ are both orthogonal to $\Omega$, there exists a function $\psi = \psi (v)$ such that 
\[
F^\prime(v) = \psi (v) E(v) = \psi (v) \frac{(I_2- \Omega \otimes \Omega)v}{\sqrt{|v|^2 - (\Omega \cdot v)^2}},\;\;v \in \R^2 \setminus ( \R \Omega) \, .
\]
If $d \geq 3$, let us denote by $^\bot E$, any unitary vector orthogonal to $E$ and $\Omega$. Introducing the orthogonal matrix $\calO = I_d - 2 \;^\bot E \otimes \;^\bot E \in \calTu$, we obtain $F^\prime  ( \;^t \calO \;\cdot) = \;^t \calO F^\prime $. Observe that
\[
0 = \;^\bot E \cdot E(v) = \;^\bot E\cdot \frac{v - (v \cdot \Omega)\Omega}{\sqrt{|v|^2 - (v \cdot \Omega) ^2}} = \frac{\;^\bot E\cdot v}{\sqrt{|v|^2 - (v \cdot \Omega) ^2}},\;\;\calO v = v
\]
and thus 
\[
F^\prime (v) = F^\prime  (\calO v ) = \calO F^\prime(v) = (I_d - 2 \;^\bot E \otimes \;^\bot E) F^\prime (v) = F^\prime (v) - 2 ( \;^\bot E \cdot F^\prime (v)) \;^\bot E
\]
from which it follows that $^\bot E \cdot F^\prime(v) = 0$, for any vector $^ \bot E$ orthogonal to $E$ and $\Omega$. Hence, there exists a function $\psi (v)$ such that 
\[
F^\prime (v) = \psi (v) E (v) = \psi (v) \frac{(I_d - \Omega \otimes \Omega)v}{\sqrt{|v|^2 - (v \cdot \Omega)^2}},\;\;v \in \R ^d \setminus (\R \Omega).
\]
It is easily seen that the function $\psi$ is left invariant by the family $\calTu$. Indeed, for any $\calO \in \calTu$ we have
\[
\psi (\;^t \calO v) = F^\prime (\;^t \calO v) \cdot E (\;^t \calO v) = \;^t \calO F^\prime (v) \cdot \;^t \calO E(v) = F^\prime (v) \cdot E(v) = \psi (v),\; v \in \R^d.
\]
Similarly, $\psi _\Omega$ is left invariant by the family $\calTu$
\[
\psi _\Omega (\;^t \calO v ) = F (\;^t \calO v ) \cdot \Omega = \;^t \calO F(v) \cdot \Omega = F(v) \cdot \calO \Omega = F(v) \cdot \Omega = \psi _\Omega (v),\;v \in \R^d, \;\calO \in \calTu.
\]
\end{proof}
The functions $\psi, \psi _\Omega$ will enter the fluid model satisfied by the macroscopic quantities $\rho, \Omega, |u|$. It is convenient to determine the elliptic partial differential equations satisfied by them.
\begin{pro}
There are two functions $\chi = \chi (c, r) : ]-1, 1[ \times ]0,+\infty[ \to \R$, $\chi _\Omega = \chi _\Omega (c,r) : ]-1,1[ \times ]0,+\infty[ \to \R$ such that $\psi (v) = \chi \left ( v \cdot \Omega/|v|, |v|\right ), \psi _\Omega (v) = \chi _\Omega \left ( v \cdot \Omega/|v|, |v|\right )$, $v \in \R^d \setminus (\R \Omega)$. The above functions satisfy
\begin{align}
\label{EquChi}
- & \sigma \partial _c \{ r ^{d-3} (1 - c^2) ^{\frac{d-1}{2}} e(c,r,|u|) \partial _c \chi \}  - \sigma \partial _r \{ r ^{d-1} (1 - c^2 ) ^{\frac{d-3}{2}} e(c, r, |u|) \partial _r \chi \} \\
& + \sigma (d-2) r ^{d-3} (1 - c^2) ^{\frac{d-5}{2}} e(c, r, |u|) \chi = r^d (1 - c^2) ^{\frac{d-2}{2}}e(c,r,|u|),\;(c,r) \in  ]-1,1[ \times ]0,+\infty[\nonumber
\end{align}
and
\begin{align}
\label{EquChiOme}
- & \sigma \partial _c \{ r ^{d-3} (1 - c^2) ^{\frac{d-1}{2}} e(c,r,|u|) \partial _c \chi _\Omega \} - \sigma \partial _r \{ r ^{d-1} (1 - c^2 ) ^{\frac{d-3}{2}} e(c, r, |u|) \partial _r \chi _\Omega \} \\
& = r^{d-1 } (r c - |u|)  (1 - c^2) ^{\frac{d-3}{2}}e(c,r,|u|),\;(c,r) \in  ]-1,1[ \times ]0,+\infty[\nonumber
\end{align}
where $e(c, r, l) = \exp \left ( - \frac{r^2}{2\sigma} + \frac{rcl}{\sigma} - \frac{V(r)}{\sigma} \right )$. 
\end{pro}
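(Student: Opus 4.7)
The existence of the representations $\psi(v)=\chi(v\cdot\Omega/|v|,|v|)$ and $\psi_\Omega(v)=\chi_\Omega(v\cdot\Omega/|v|,|v|)$ is immediate from the previous proposition: since $\psi,\psi_\Omega$ are left invariant by $\calTu$, Remark~\ref{Sym} tells us they factor through $(v\cdot\Omega,|v|)$, and since they are invariant under the scaling that preserves $|v|$, they depend on $v\cdot\Omega/|v|=c$ and $|v|=r$. The content of the statement is therefore the two elliptic equations, which are to be derived from the variational characterisation of $F$ (the preceding proposition) by testing against well-chosen vector fields and rewriting everything in the variables $(c,r,\omega)\in{]-1,1[}\times\R_+^\star\times\sphere_{d-2}$, where $v=rc\Omega+r\sqrt{1-c^2}\,\omega$ and $\md v=r^{d-1}(1-c^2)^{(d-3)/2}\,\md c\,\md r\,\md\omega$. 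Note that in these coordinates $E(v)=\omega$, and $M_u(v)=K\,e(c,r,|u|)$ for a constant $K$ depending only on $\sigma$ and $|u|$.

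For the equation \eqref{EquChiOme} on $\chi_\Omega$ I would take test functions of the form $\eta(v)=\varphi(c,r)\,\Omega\in\bhomu$. Since $\nabla_v\chi_\Omega$ has no angular component and $\Omega$ is constant, the only surviving term in ${\bf a}_u(F,\eta)$ is $\sigma\int\nabla_v\psi_\Omega\cdot\nabla_v\varphi\,M_u\,\md v$ (the cross term with $\psi E$ vanishes because $E\cdot\Omega=0$ and $(\partial_v E)^T\Omega=0$). Computing $|\nabla c|^2=(1-c^2)/r^2$, $|\nabla r|^2=1$, $\nabla c\cdot\nabla r=0$, the Dirichlet form becomes $\sigma\int\!\!\int[(1-c^2)/r^2\,\partial_c\chi_\Omega\partial_c\varphi+\partial_r\chi_\Omega\partial_r\varphi]\,e(c,r,|u|)r^{d-1}(1-c^2)^{(d-3)/2}\,\md c\md r$ (up to the harmless constant $K|\sphere_{d-2}|$). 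The right-hand side $\int(v-u)\cdot\Omega\,\varphi\,M_u\,\md v$ rewrites as $\int\!\!\int(rc-|u|)\varphi\,e(c,r,|u|)r^{d-1}(1-c^2)^{(d-3)/2}\,\md c\md r$. Integrating by parts in $c$ and $r$ (with appropriate decay/boundary behaviour) produces \eqref{EquChiOme}.

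For \eqref{EquChi} I would test with $\eta(v)=\varphi(c,r)\,\omega$, which is interpreted as $\eta(v)=\phi(c,r)(v-(v\cdot\Omega)\Omega)/r$ with $\phi(c,r)=\varphi(c,r)/\sqrt{1-c^2}$ in order to keep $\eta\in\bhomu$ near the $\Omega$-axis; by a density argument it suffices to work with $\varphi$ compactly supported away from $c=\pm1$. Writing $F=\chi\,\omega+\chi_\Omega\,\Omega$, one computes
\[
\partial_v\omega=\tfrac{1}{r\sqrt{1-c^2}}\bigl(I_d-\Omega\otimes\Omega-\omega\otimes\omega\bigr),
\]
which is $(r\sqrt{1-c^2})^{-1}$ times a rank $d-2$ orthogonal projector. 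The key identities $(\partial_v\omega)^T\omega=0$ (from $|\omega|=1$) and $(\partial_v\omega)^T\Omega=0$ (from $\omega\cdot\Omega=0$) kill all cross terms in $\partial_vF\!:\!\partial_v\eta$, leaving
\[
\partial_vF\!:\!\partial_v\eta=\nabla_v\chi\cdot\nabla_v\varphi+\chi\varphi\,|\partial_v\omega|^2,\qquad |\partial_v\omega|^2=\tfrac{d-2}{r^2(1-c^2)}.
\]
The right-hand side $\int(v-u)\cdot\eta\,M_u\,\md v$ becomes $\int\!\!\int r\sqrt{1-c^2}\,\varphi\,e(c,r,|u|)r^{d-1}(1-c^2)^{(d-3)/2}\md c\md r$, that is $\int\!\!\int r^d(1-c^2)^{(d-2)/2}\varphi\,e\,\md c\md r$. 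Integration by parts in $c$ and $r$ produces the two divergence terms of \eqref{EquChi}, while the $\chi\varphi|\partial_v\omega|^2$ term yields the zeroth-order term $\sigma(d-2)r^{d-3}(1-c^2)^{(d-5)/2}e\,\chi$.

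The main obstacle is the $\chi$ equation: one must carefully compute $\partial_v\omega$, verify that the cross terms vanish so that the problem decouples into a scalar one for $\chi$, identify the precise $(d-2)$ factor coming from the rank of the projection $I_d-\Omega\otimes\Omega-\omega\otimes\omega$, and justify the integration by parts near $c=\pm1$ in spite of the singularity of $\omega$ (which is handled by approximating by test functions vanishing in a neighbourhood of the axis and using that the variational problem for $F$ has already been solved in $\bhomu$).
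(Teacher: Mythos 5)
Your proposal is correct and follows the same overall strategy as the paper: represent $F = \psi\, E + \psi_\Omega\,\Omega$, use the variational characterisation of $F$, change to the variables $(c,r,\omega)$, and integrate by parts to pass from the weak form to the two divergence-form PDEs. The difference lies in the choice of test object for the $\chi$ equation. The paper works with the \emph{scalar} variational problem for $\psi_{E_i}$ (fixed $i$), takes test functions $\psi_{E_i,h}(v) = h(c,r)\,(v\cdot E_i)/\sqrt{|v|^2-(v\cdot\Omega)^2}$, computes the gradient explicitly, and the coefficients in \eqref{EquChi} — including the factor $d-2$ — emerge from the angular averages $\int_{\Sp^{d-2}}(\omega\cdot E_i)^2\,\md\omega = |\Sp^{d-2}|/(d-1)$ and $\int_{\Sp^{d-2}}(1-(\omega\cdot E_i)^2)\,\md\omega = (d-2)|\Sp^{d-2}|/(d-1)$, which cancel between the two sides. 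You instead test the \emph{vector} problem with $\eta = \varphi(c,r)\,E(v)$, which is the sum over $i$ of the paper's test functions, so no explicit angular averaging is needed: the cross terms die because $({}^t\partial_v E)\,E = 0$ and $({}^t\partial_v E)\,\Omega = 0$, and the zeroth-order coefficient appears directly as $|\partial_v E|^2 = (d-2)/(r^2(1-c^2))$, i.e.\ as the rank of the projector $I_d-\Omega\otimes\Omega-E\otimes E$. This is a clean and conceptual way to see the $d-2$; the paper's computation is more pedestrian but also yields, as a by-product, the membership criterion for the Hilbert space $H_{\perp,|u|}$. Two minor points: the justification of the representation $\psi(v)=\chi(c,r)$ via ``invariance under the scaling that preserves $|v|$'' is a red herring — Remark~\ref{Sym} gives dependence on $(v\cdot u, |v|)$, and since $|u|$ is fixed and nonzero, reparametrising to $(v\cdot\Omega/|v|,|v|)$ is immediate; and the phrase ``$\nabla_v\chi_\Omega$ has no angular component'' is imprecise (that gradient does have a component along $E$), though the vanishing of the cross terms is correct for the reasons you actually give ($E\cdot\Omega=0$ and $({}^t\partial_v E)\Omega=0$). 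You rightly flag the singularity of $E$ on the axis and the need to approximate by test functions supported away from $c=\pm1$; this is also implicit, though unaddressed, in the paper's use of $H_{\perp,|u|}$.
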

\begin{proof}
The function $\psi _\Omega = F \cdot \Omega$ satisfies
\begin{equation}
\label{Equ45}
\psi _\Omega \in \thomu\;\mbox{ and } \sigma \intv{\nabla _v \psi _\Omega \cdot \nabla _v \theta \;M_u (v) } = \intv{(v-u) \cdot \Omega \;\theta (v) M_u (v)},\;\theta \in \thomu. 
\end{equation}
By Remark \ref{Sym} we know that there is $\chi _\Omega = \chi _\Omega (c,r)$ such that $\psi _\Omega (v) = \chi _\Omega ( v\cdot \Omega /|v|, |v|), v \in \R ^d \setminus ( \R \Omega)$. As $\psi _\Omega$ belongs to $\thomu$, which is equivalent to
\[
\intv{\psi _\Omega (v) M_u (v)} = 0,\;\;\intv{|\nabla _v \psi _\Omega |^2 M_u (v) } < +\infty
\]
we are led to the Hilbert space
\begin{align*}
H_{\parallel, |u|} & = \{ h : ]-1,1[ \times ]0,+\infty[ \to \R,\;\; \intcr{r^{d-1}}{h(c,r) e(c,r,|u|) ( 1 - c^2) ^{\frac{d-3}{2}}} = 0,\\
& \intcr{r^{d-1}}{\left [ (\partial _c h )^2 \frac{1- c^2}{r^2} + (\partial _r h ) ^2\right ] e(c,r,|u|) ( 1 - c^2) ^{\frac{d-3}{2}}} < +\infty\}
\end{align*}
endowed with the scalar product
\[
(h, g)_{\parallel, |u|} =  \intcr{r^{d-1}}{\left [ \partial _c h  \partial _c g  \frac{1- c^2}{r^2} + \partial _r h \partial _r g\right ] e(c,r,|u|) ( 1 - c^2) ^{\frac{d-3}{2}}},\;h, g \in H_{\parallel, |u|}.
\]
Taking in \eqref{Equ45} $\theta (v) = h (   v\cdot \Omega /|v|, |v|)$, with $h \in H_{\parallel, |u|}$ (which means $\theta \in \thomu$), we obtain
\begin{align*}
\sigma & \intcr{r^{d-1}}{\left [\partial _c \chi _\Omega \partial _c h \frac{1- c^2}{r^2} + \partial _r \chi _\Omega \partial _r h  \right] e(c,r,|u|) ( 1 - c^2) ^{\frac{d-3}{2}}}\\
& = \intcr{r^{d-1}}{(r c - |u|) h(c,r) e(c,r,|u|) ( 1 - c^2) ^{\frac{d-3}{2}}}
\end{align*}
which implies \eqref{EquChiOme}. We focus now on the equation satisfied by $\psi$. Let us consider an orthonormal basis $\{E_1, ..., E_{d-1}\}$ of $(\R \Omega ) ^\perp$. By Remark \ref{Sym} we know that there is $\chi = \chi (c,r)$ such that $\psi (v) = \chi  (   v\cdot \Omega /|v|, |v|)$ and
\[
\psi _{E_i} (v) = F(v) \cdot E_i = \psi (v) \frac{v \cdot E_i}{\sqrt{|v|^2 - ( v \cdot \Omega )^2}} = \chi  (   v\cdot \Omega /|v|, |v|) \frac{v \cdot E_i}{\sqrt{|v|^2 - ( v \cdot \Omega )^2}}
\]
for $v \in \R^d \setminus (\R \Omega), i \in \{1, ..., d-1\}$. Let us consider $\psi _{E_i, h} (v) = h(   v\cdot \Omega /|v|, |v|) \frac{v \cdot E_i}{\sqrt{|v|^2 - ( v \cdot \Omega )^2}}$, where $h = h(c,r)$ is a function such that $\psi _{E_i, h } \in \homu$. Actually, once that $\psi _{E_i, h} \in \homu$, then $((\psi _{E_i, h}, 1))_{M_u} = \intv{h(   v\cdot \Omega /|v|, |v|) \frac{v \cdot E_i}{\sqrt{|v|^2 - ( v \cdot \Omega )^2}}M_u (v)} = 0$, saying that $\psi _{E_i, h} \in \thomu$. A straightforward computation shows that 
\begin{align*}
\nabla _v \psi _{E_i,h} & = \frac{v \cdot E_i}{\sqrt{|v|^2 - ( v \cdot \Omega )^2}}\left [\partial _c h \frac{I_d - \frac{v \otimes v}{|v|^2}}{|v|} \Omega + \partial _r h \frac{v}{|v|}    \right ]\\
& + h \left ( \frac{v \cdot \Omega}{|v|}, |v|\right ) \left [ I_d - \frac{(v - (v \cdot \Omega) \Omega ) \otimes v }{|v|^2 - (v \cdot \Omega)^2} \right ]\frac{E_i}{\sqrt{|v|^2 - ( v \cdot \Omega )^2}}
\end{align*}
and 
\begin{align*}
|\nabla _v \psi _{E_i,h}|^2 & = \frac{(v \cdot E_i )^2}{|v|^4} (\partial _c h )^2 + \frac{(v \cdot E_i )^2}{|v|^2 - (v \cdot \Omega) ^2} (\partial _r h )^2 \\
& + \frac{|v|^2 - (v \cdot \Omega) ^2 - (v \cdot E_i )^2 }{( |v|^2 - ( v \cdot \Omega )^2)^2} h ^ 2\left ( \frac{v \cdot \Omega}{|v|}, |v|\right ).
\end{align*}
The condition $\psi _{E_i,h} \in \homu$ writes
\[
\intv{(\psi _{E_i,h})^2 M_u (v)} < + \infty,\;\;\intv{|\nabla _v \psi _{E_i,h}|^2 M_u (v)} <  + \infty
\]
which is equivalent, thanks to the Poincar\'e inequality \eqref{Equ22} to
\begin{align*}
& \intv{|\nabla _v \psi _{E_i,h}|^2 M_u (v)} \\
& = \intv{ \left [ \frac{(v \cdot E_i )^2}{|v|^4} (\partial _c h )^2 + \frac{(v \cdot E_i )^2 (\partial _r h )^2}{|v|^2 - (v \cdot \Omega) ^2}  
 + \frac{|v|^2 - (v \cdot \Omega) ^2 - (v \cdot E_i )^2 }{( |v|^2 - ( v \cdot \Omega )^2)^2} h ^ 2 \right ] M_u (v)}< +  \infty
\end{align*}
and therefore to $h \in H_{\perp, |u|}$,  where we consider he Hilbert space 
\[
H_{\perp, |u|} = \{h : ]-1,1[ \times ]0,+\infty[ \to \R,\;\;\|h \|^2_{\perp, |u|} = (h,h)_{\perp, |u|} < +\infty\}
\]
endowed  with the scalar product
\begin{align*}
(g, h) _{\perp, |u|} & = \intcr{r^{d-1}}{\left [ \frac{1-c^2}{r^2} \partial _c g \partial _c h + \partial _r g \partial _r h + \frac{(d-2)gh}{r^2 (1- c^2)} \right ] \\
& e(c, r, |u|) (1 - c^2) ^{\frac{d-3}{2}}}, g, h, \in H_{\perp, |u|}. 
\end{align*}
Taking $\theta = \psi _{E_i, h} \in \thomu $ in \eqref{Equ33} leads to 
\[
\sigma \intv{\nabla _v \psi _{E_i} \cdot \nabla _v \psi _{E_i, h} M_u (v)} = \intv{\psi _{E_i, h} \;(v \cdot E_i) M_u (v) },\;\;h \in H_{\perp, |u|}
\]
or equivalently
\begin{align*}
\sigma & \intcr{r^{d-1}}{{\left [ \frac{1-c^2}{r^2} \partial _c \chi \partial _c h + \partial _r \chi \partial _r h + \frac{(d-2)\chi h}{r^2 (1- c^2)} \right ] e(c, r, |u|) (1 - c^2) ^{\frac{d-3}{2}}}} \\
& = \intcr{r^{d-1}}{r h(c,r) e(c, r, |u|) (1 - c^2) ^{\frac{d-2}{2}}},\;h \in H_{\perp, |u|}
\end{align*}
which implies \eqref{EquChi}.
\end{proof}

\section{The fluid model}
\label{MainTh}

The balances for the macroscopic quantities $\rho, u$ follow by using the elements in the kernel of $\calL _f ^\star$. 

\begin{proof} (of Theorem \ref{MainRes1})\\
The use of $\psi = 1 \in \ker \calL _f ^\star$ leads to \eqref{Equ51}. By Lemma \ref{Spectral}, we know that $(\R u ) ^\perp \subset \ker ( \calmu - \sigma I_d)$ and thus, for any $(t,x) \in \R_+ \times \R^d$, the vector field
\[
v \mapsto F^\prime (t,x,v) = \chi \left (\frac{v \cdot \Omega (t,x)}{|v|}, |v|   \right ) \frac{(I_d - \Omega (t,x) \otimes \Omega (t,x))v}{\sqrt{|v|^2 - (v \cdot \Omega (t,x))^2}} = \sumi \psi _{E_i} (v) E_i
\]
belongs to the kernel of $\calL _f ^\star$, implying that
\begin{equation*}
\intv{\partial _t \fz \;F^\prime (t,x,v)} + \intv{v \cdot \nabla _x \fz  \;F^\prime(t,x,v) } = 0,\;\;(t,x) \in \R_+ \times \R^d.
\end{equation*}
We have $\partial _t \fz = \partial _t \rho M_u + \rho \frac{M_u }{\sigma} (v - u) \cdot \partial _t u$ and we obtain
\begin{align*}
& \intv{\partial _t \fz \;F^\prime}  = \intv{\left ( \partial _t \rho + \frac{\rho }{\sigma} (v - u) \cdot \partial _t u \right ) \chi \left ( \frac{v \cdot \Omega}{|v|}, |v|\right ) \frac{v - ( v \cdot \Omega) \Omega}{\sqrt{|v|^2 - (v \cdot \Omega) ^2}}M_u (v)}\\
& = \partial _t \rho \intv{\chi \left ( \frac{v \cdot \Omega}{|v|}, |v|\right )\frac{v - ( v \cdot \Omega) \Omega}{\sqrt{|v|^2 - (v \cdot \Omega) ^2}}M_u (v) } \\
& + \frac{\rho}{\sigma} \intv{\chi \left ( \frac{v \cdot \Omega}{|v|}, |v|\right )M_u (v) \frac{[v - ( v \cdot \Omega) \Omega] \otimes [v - (v \cdot \Omega) \Omega + (v \cdot \Omega) \Omega - u]}{\sqrt{|v|^2 - (v \cdot \Omega) ^2}}}\;\partial _t u. \nonumber 
\end{align*}
It is easily seen (use the change of variable $v = (I_d - 2 E_i \otimes E_i) v^\prime, 1 \leq i \leq d-1$) that 
\[
\intv{\chi \frac{v - ( v \cdot \Omega) \Omega}{\sqrt{|v|^2 - (v \cdot \Omega) ^2}}M_u (v) } = \sumi \intv{\chi \frac{(v \cdot E_i)E_i}{\sqrt{|v|^2 - (v \cdot \Omega) ^2}}M_u (v) } = 0 ,
\]
\begin{align*}
\intv{\chi M_u  \frac{[v - ( v \cdot \Omega) \Omega] \otimes [(v \cdot \Omega) \Omega - u]}{\sqrt{|v|^2 - (v \cdot \Omega) ^2}}} = \sumi \intv{\chi M_u  \frac{(v \cdot E_i)E_i \otimes [(v \cdot \Omega) \Omega - u]}{\sqrt{|v|^2 - (v \cdot \Omega) ^2}}} = 0,
\end{align*}
and
\begin{align*}
\intv{\chi M_u  \frac{[v - ( v \cdot \Omega) \Omega] \otimes [v - (v \cdot \Omega) \Omega]}{\sqrt{|v|^2 - (v \cdot \Omega) ^2}}} & = \sum_{1 \leq i, j \leq d-1} \intv{\chi M_u  \frac{(v \cdot E_i) (v \cdot E_j)}{\sqrt{|v|^2 - (v \cdot \Omega) ^2}}}E_i \otimes E_j\\
& = \sumi \intv{\chi \frac{(v \cdot E_i )^2}{\sqrt{|v|^2 - (v \cdot \Omega) ^2}} M_u (v)} E_i \otimes E_i \\
& = \intv{\chi \frac{\sqrt{|v|^2 - (v \cdot \Omega) ^2}}{d-1} M_u (v) } (I_d - \Omega \otimes \Omega). 
\end{align*}
Therefore one gets 
\begin{equation}
\label{Equ53}
\intv{\partial _t f F^\prime (t,x,v) } = c_{\perp, 1} \frac{\rho}{\sigma} (I_d - \Omega \otimes \Omega)\partial _t u 
\end{equation}
with 
\[
c_{\perp, 1} = \intv{\chi \left (\frac{v \cdot \Omega }{|v|}, |v|   \right )\frac{\sqrt{|v|^2 - (v \cdot \Omega) ^2}}{d-1} M_u (v) }.
\]
Observe also that
\begin{align*}
v \cdot \nabla _x f & = (v \cdot \nabla _x \rho ) M_u + \frac{\rho}{\sigma} \partial _x u v \cdot (v-u) M_u \\
& = ( v \cdot \nabla _x \rho) M_u + \frac{\rho}{\sigma} \partial _x u v \cdot (v - (v \cdot \Omega) \Omega + (v \cdot \Omega)\Omega - u) M_u,
\end{align*}
and therefore 
\begin{align}
\label{Equ53Bis}
& \intv{(v \cdot \nabla _x f ) \;F^\prime}  = \intv{\chi \left ( \frac{v \cdot \Omega}{|v|}, |v|\right )M_u (v) \frac{(v - (v \cdot \Omega) \Omega)\otimes v}{\sqrt{|v|^2 - (v \cdot \Omega)^2}}} \;\nabla _x \rho \\
& + \frac{\rho}{\sigma} \intv{ \chi \left ( \frac{v \cdot \Omega}{|v|}, |v|\right )M_u (v) \frac{(v - (v \cdot \Omega)\Omega) \otimes (v - (v \cdot \Omega) \Omega + (v \cdot \Omega)\Omega - u)}{\sqrt{|v|^2 - (v \cdot \Omega)^2}} \; \partial _x u v}.\nonumber 
\end{align}
As before, using the change of variable $v = (I_d - 2E_i \otimes E_i ) v^\prime, 1 \leq i \leq d-1$, we have
\begin{align*}
\intv{\chi M_u (v)  \frac{(v - (v \cdot \Omega) \Omega)\otimes v}{\sqrt{|v|^2 - (v \cdot \Omega)^2}}} & = \intv{\chi M_u \frac{(v - (v \cdot \Omega) \Omega)\otimes (v - (v \cdot \Omega) \Omega)}{\sqrt{|v|^2 - (v \cdot \Omega)^2}}} \\
& + \intv{\chi M_u \frac{(v - (v \cdot \Omega) \Omega)\otimes (v \cdot \Omega) \Omega}{\sqrt{|v|^2 - (v \cdot \Omega)^2}}} \\
& = c_{\perp, 1} (I_d - \Omega \otimes \Omega). 
\end{align*}
For the second integral in the right hand side of \eqref{Equ53Bis}, by noticing that
\[
\intv{(v\cdot E_i) (v \cdot E_j) (v\cdot E_k) \chi \left ( \frac{v \cdot \Omega}{|v|}, |v|\right ) M_u (v) } = 0,\;\;i,j,k \in \{1, ..., d-1\},
\]
we obtain
\begin{align*}
& \intv{\chi M_u (v)  \frac{(v - (v \cdot \Omega) \Omega)\otimes ( v- (v \cdot \Omega) \Omega + (v \cdot \Omega )\Omega - u)}{\sqrt{|v|^2 - (v \cdot \Omega)^2}}\partial _x u v } \\
& = \intv{\chi M_u \frac{(v - (v \cdot \Omega) \Omega)\otimes (v - (v \cdot \Omega) \Omega)}{\sqrt{|v|^2 - (v \cdot \Omega)^2}}\partial _x u \Omega \;(v \cdot \Omega) } \\
& \quad + \intv{\chi M_u \frac{(v - (v \cdot \Omega) \Omega)\otimes ((v \cdot \Omega) \Omega - u)}{\sqrt{|v|^2 - (v \cdot \Omega)^2}}\partial _x u ( v - (v \cdot \Omega)\Omega) } \\
& = c_{\perp, 2} (I_d - \Omega \otimes \Omega) \partial _x u \Omega + \intv{\chi M_u \frac{(v - (v \cdot \Omega) \Omega)\otimes (v - (v \cdot \Omega) \Omega)}{\sqrt{|v|^2 - (v \cdot \Omega)^2}}\;^t \partial _x u [(v \cdot \Omega)\Omega - u] } \\
& = c_{\perp, 2} (I_d - \Omega \otimes \Omega) (\partial _x u + \;^t \partial _x u ) \Omega - c_{\perp, 1} (I_d - \Omega \otimes \Omega) \; (u\cdot \partial _x) u,
\end{align*}
where
\[
c_{\perp, 2} = \intv{(v\cdot \Omega) \chi \frac{\sqrt{|v|^2 - (v \cdot \Omega)^2}}{d-1} M_u (v)}.
\]
Therefore we deduce
\begin{align}
\label{Equ54}
\intv{(v \cdot \nabla _x f) F^\prime (t,x,v)} & = c_{\perp, 1} (I_d - \Omega \otimes \Omega) \nabla _x \rho + \frac{\rho}{\sigma} c_{\perp, 2} (I_d - \Omega \otimes \Omega) (\partial _x u + \;^t \partial _x u ) \Omega \nonumber \\
& \quad -  \frac{\rho}{\sigma} c_{\perp, 1}(I_d - \Omega \otimes \Omega)\; (u\cdot \partial _x) u 
\end{align}
and finally \eqref{Equ53}, \eqref{Equ54} yield
\begin{equation}
\label{Equ55}
(I_d - \Omega \otimes \Omega) \partial _t u + \sigma (I_d - \Omega \otimes \Omega)\frac{\nabla _x \rho}{\rho} + c_\perp (I_d - \Omega \otimes \Omega)(u\cdot \partial _x) u + (c_\perp - 1) (I_d - \Omega \otimes \Omega) \nabla _x \frac{|u|^2}{2} = 0
\end{equation}
where
\begin{align*}
c_\perp & = \frac{c_{\perp, 2}}{|u| \; c_{\perp, 1}} = \frac{\intv{(v \cdot \Omega) \chi\left ( \frac{v \cdot \Omega}{|v|}, |v|\right ) \sqrt{|v|^2 - (v \cdot \Omega) ^2} M_u (v)}}{|u| \;\intv{ \chi \left ( \frac{v \cdot \Omega}{|v|}, |v|\right ) \sqrt{|v|^2 - (v \cdot \Omega) ^2} M_u (v)}}\\
& = \frac{\int_{\R_+} r^{d+1} \intth{\cos \theta \chi (\cos \theta, r) e(\cos \theta, r, l(\sigma)) \sin ^{d-1} \theta }\md r}{l(\sigma) \int_{\R_+} r^{d} \intth{ \chi (\cos \theta, r) e(\cos \theta, r, l(\sigma)) \sin ^{d-1} \theta }\md r}.
\end{align*}
Recall that $|u| = l(\sigma)$ and therefore we have $u \cdot \partial _t u = \frac{1}{2} \partial _t |u|^2 = 0$, $(u\cdot \partial _x) u = \frac{1}{2} \nabla _x |u|^2 = 0$, implying that 
\[
\Omega \cdot \partial _t u = 0,\;\;^t \partial _x u \Omega = 0,\;\;\Omega \cdot \partial _x u \Omega = 0.
\]
The equation \eqref{Equ55} becomes
\[
\partial _t \Omega + l(\sigma) c_\perp ( \Omega \cdot \nabla _x ) \Omega + \frac{\sigma}{l(\sigma)} (I_d - \Omega \otimes \Omega) \frac{\nabla _x \rho}{\rho} = 0.
\]
We have to check that $c_{\perp, 1} \neq 0$. This comes by using the elliptic equations satisfied by $\psi _{E_i}$, that is 
\[
- \sigma \Divv ( M_u \nabla _v \psi _{E_i}) = (v \cdot E_i) M_u (v),\;\;v \in \R^d,\;\;i \in \{1, ..., d-1\}.
\]
Indeed, we have
\begin{align*}
c_{\perp, 1} & = \intv{\chi \frac{(v \cdot E_i )^2}{\sqrt{|v|^2 - (v \cdot \Omega) ^2}}M_u (v)} = \intv{(F(v) \cdot E_i) (v \cdot E_i) M_u (v)} \\
& = \intv{\psi _{E_i} (v) (v \cdot E_i) M_u (v)} = \sigma \intv{|\nabla _v \psi _{E_i} |^2 M_u (v)} >0.
\end{align*}
\end{proof}
Other potentials $v \mapsto V(|v|)$ can be handled as well. For example, let us assume that 
there is $\sigma >0$, $ 0 \leq l_1 (\sigma) < l_2 (\sigma)\leq +\infty$ such that the function $l \mapsto Z(\sigma, l)$ is stricly increasing on $[0,l_1(\sigma)]$, constant on $[l_1(\sigma), l_2(\sigma)[$, and strictly decreasing on $[l_2(\sigma), +\infty[$. In that case, for any $l \in [l_1(\sigma), l_2(\sigma)[$ we have $\partial _{ll} ^2 Z(\sigma, l) = 0$ and by Lemma \ref{Spectral} we deduce that $\calmu = \sigma I_d$,  saying that $\ker ( \calmu - \sigma I_d) = \R ^d$. Using the function $\psi _\Omega$, we obtain a balance for $|u|$ as well. 
\begin{proof} (of Theorem \ref{MainRes2})\\
In this case $\psi _\Omega$ belongs to $\ker \calL _f ^\star$, and therefore we also have the balance
\[
\intv{\partial _t \fz \psi _{\Omega (t,x)}(v)} + \intv{(v \cdot \nabla _x \fz ) \psi _{\Omega (t,x)} (v) } = \intv{\calL _{\fz(t,x,\cdot)} (\fo ) \psi _{\Omega (t,x)}} = 0.
\]
As before, using also $\intv{\psi _\Omega (v) M_u (v)} = 0$, we write
\begin{align}
\label{Equ61}
\intv{\partial _t \fz \psi _\Omega} & = \intv{\left [ \partial _t \rho M_u (v)+ \frac{\rho}{\sigma} M_u (v)(v-u) \cdot \partial _t u  \right ]\psi _\Omega (v)}\\
& = \left ( \partial _t \rho - \frac{\rho}{\sigma} u \cdot \partial _t u \right ) \intv{\psi _\Omega (v) M_u (v)}  \nonumber \\
& + \frac{\rho}{\sigma} \intv{\chi _\Omega M_u (v) [v - (v \cdot \Omega) \Omega + (v \cdot \Omega) \Omega]} \cdot \partial _t u \nonumber \\
& = \frac{\rho}{\sigma} c_{\parallel, 1}  \Omega \cdot \partial _t u \nonumber
\end{align}
where
\[
 c_{\parallel, 1} = \intv{(v \cdot \Omega) \psi _\Omega (v) M_u (v)} = \intv{(v - u) \cdot \Omega \;\psi _\Omega  M_u } = \sigma \intv{|\nabla _v \psi _\Omega |^2 M_u (v)} >0.
\]
Similarly, observe that 
\begin{align}
\label{Equ62}
& \intv{(v \cdot \nabla _x f ) \psi _\Omega }  = \intv{\left [v \cdot \nabla _x \rho + \frac{\rho}{\sigma} \partial _x u v \cdot (v-u) \right ] M_u (v)\psi _\Omega (v)}\\
& \intv{(v \cdot \Omega) \psi _\Omega (v) M_u (v) } \;( \Omega \cdot \nabla _x \rho) + \frac{\rho}{\sigma} \intv{\psi _\Omega (v) M_u (v) (v-u) \otimes v } : \partial _x u \nonumber \\
& = c_{\parallel, 1} \Omega \cdot \nabla _x \rho + \frac{\rho}{\sigma} \intv{\psi _\Omega M_u  \{ [ v - (v \cdot \Omega) \Omega ] \otimes [v - (v \cdot \Omega) \Omega ] + (v \cdot \Omega)^2 \Omega \otimes \Omega\}}: \partial _x u \nonumber \\
& - \frac{\rho}{\sigma} \intv{\psi _\Omega (v) M_u (v) v } \cdot \;^t \partial _x u u = c_{\parallel, 1} \Omega \cdot \nabla _x \rho \nonumber \\
& + \frac{\rho}{\sigma} \left \{ \intv{\psi _\Omega  M_u  \frac{|v|^2 - (v \cdot \Omega)^2}{d-1} } \;(I_d - \Omega \otimes \Omega)  + \intv{(v \cdot \Omega)^2 \psi _\Omega  M_u  } \;\Omega \otimes \Omega \right \}: \partial _x u \nonumber \\
& - \frac{\rho}{\sigma} c_{\parallel, 1} \Omega \cdot \;^t \partial _x u u \nonumber \\
& = c_{\parallel, 1} \Omega \cdot \nabla _x \rho + \frac{\rho}{\sigma} ( 2 c_{\parallel, 2} - |u| c_{\parallel, 1} ) \Omega \otimes \Omega : \partial _x u + \frac{\rho}{\sigma} c_{\parallel, 3} (I_d - \Omega \otimes \Omega) : \partial _x u  \nonumber \\
& = c_{\parallel, 1} \Omega \cdot \nabla _x \rho + \frac{\rho}{\sigma} \frac{c_{\parallel, 2}}{|u|} ( \Omega \cdot \partial _x u u ) + \frac{\rho}{\sigma} \left ( \frac{c_{\parallel, 2}}{|u|} - c_{\parallel, 1} \right ) \left ( \Omega \cdot \nabla _x \frac{|u|^2}{2} \right ) + \frac{\rho}{\sigma} c_{\parallel, 3} |u|\Divx \Omega \nonumber
\end{align}
where
\[
c_{\parallel, 2} = \intv{\frac{(v \cdot \Omega) ^2}{2} \psi _\Omega (v) M_u (v)},\;\;c_{\parallel, 3} = \intv{\frac{|v|^2 - (v \cdot \Omega) ^2}{d-1} \psi _\Omega (v) M_u (v)}.
\]
In the above computations we have used the identity $(I_d - \Omega \otimes \Omega) : \partial _x u = |u| \Divx \Omega$. Combining \eqref{Equ61}, \eqref{Equ62} leads to
\begin{equation}
\label{Equ63}
\Omega \cdot \partial _t u + \sigma \Omega \cdot \frac{\nabla _x \rho}{\rho} + c_\parallel ( \Omega \cdot (u\cdot \partial _x) u ) + ( c_\parallel - 1) \left ( \Omega \cdot \nabla _x \frac{|u|^2}{2} \right ) + c_\parallel ^\prime |u|^2 \Divx \Omega = 0
\end{equation}
where $c_\parallel = \frac{c_{\parallel, 2}}{|u| c_{\parallel, 1}}, \;c_\parallel ^\prime= \frac{c_{\parallel, 3}}{|u| c_{\parallel, 1}}$. Finally we deduce from \eqref{Equ55}, \eqref{Equ63} the balance for the mean velocity $u$
\begin{align}
\partial _t u & + c_\perp (I_d - \Omega \otimes \Omega) \partial _x u u + c_\parallel ( \Omega \otimes \Omega) (u\cdot \partial _x) u + (c_\perp - 1) (I_d - \Omega \otimes \Omega) \nabla _x \frac{|u|^2}{2} \nonumber \\
& + (c _\parallel - 1) (\Omega \otimes \Omega )\nabla _x \frac{|u|^2}{2}  + \sigma \frac{\nabla _x \rho }{\rho} + c^\prime _\parallel \Divx \Omega |u| u = 0.\nonumber
\end{align}
\end{proof}
\begin{remark}
\label{Maxwellian}
When $V = 0$, the equilibria are Maxwellians parametrized by $\rho \in \R_+$ and $u \in \R^d$
\[
M_u (v) = \frac{\rho}{(2\pi \sigma) ^{d/2}} \exp \left ( - \frac{|v-u|^2}{2\sigma} \right ),\;\;v \in \R^d.
\]
In that case the function $l \to Z(\sigma, l)$ is constant
\[
Z(\sigma, l) = \intv{\exp \left ( - \frac{|v-u|^2}{2\sigma} \right )} = (2\pi \sigma ) ^{d/2},\;\;l \in \R_+ ^\star.
\]
It is easily seen that the solution of
\[
- \sigma \Divv \{ M_u \partial _v F\} = (v-u) M_u (v),\;\;v \in \R^d,\;\;\intv{M_u (v) F(v) } = 0
\]
is $F(v) = v-u, v \in \R^d$, which belongs to $\bthomu$, and therefore the functions $\psi, \psi _\Omega$ such that 
\[
F(v) = \psi (v) \frac{v - (v \cdot \Omega) \Omega}{\sqrt{|v|^2 - (v \cdot \Omega)^2}} + \psi _\Omega (v) \Omega,\;\;v \in \R^d \setminus ( \R \Omega)
\]
are given by
\[
\psi (v) = (v-u) \cdot \frac{v - (v \cdot \Omega) \Omega}{\sqrt{|v|^2 - (v \cdot \Omega)^2}} = \sqrt{|v|^2 - (v \cdot \Omega)^2},\;\;\psi _\Omega (v) = (v - u) \cdot \Omega,\;\;v \in \R^d
\]
and $\psi _{E_i}(v) = F(v) \cdot E_i = (v \cdot E_i), v \in \R^d, 1\leq i \leq d-1$.
\end{remark}
By straightforward computations we obtain
\[
c_{\perp, 1} = \intv{\frac{|v|^2 - (v \cdot \Omega) ^2}{d-1} M_u } = \intv{(v\cdot E_1) ^2 M_u } = - \sigma \intv{(v \cdot E_1) (\nabla _v M_u \cdot E_1) } = \sigma
\]
\begin{align*}
c_{\perp, 2} & = \intv{(v \cdot \Omega) \frac{|v|^2 - (v \cdot \Omega) ^2}{d-1} M_u } = \intv{(v \cdot \Omega) (v\cdot E_1) ^2 M_u } \\
& =- \sigma \intv{(v \cdot \Omega) ( v \cdot E_1) \Divv (M_u E_1) } = \sigma \intv{M_u (v)E_1 \cdot [ ( v \cdot E_1) \Omega + (v \cdot \Omega) E_1]} \\
& = \sigma |u|.
\end{align*}
\[
c_\perp = \frac{c_{\perp, 2}}{|u|c_{\perp, 1}} = 1
\]
\[
c_{\parallel, 1} = \sigma \intv{|\nabla _v \psi _\Omega|^2 M_u (v)} =  \sigma
\]
\[
c_{\parallel, 2} = \intv{\frac{(v \cdot \Omega)^2}{2} \psi _\Omega  M_u} =  - \sigma \intv{\frac{(v \cdot \Omega)^2}{2}\Divv ( M_u \Omega) } = \sigma \intv{(v \cdot \Omega) M_u } = \sigma |u|
\]
\[
c_\parallel = \frac{c_{\parallel, 2}}{|u|c_{\parallel, 1}} = 1
\]
\begin{align*}
c_{\parallel, 3} & = \intv{\frac{|v|^2 - (v \cdot \Omega) ^2}{d-1} \psi _\Omega M_u } = \intv{(v \cdot E_1) ^2 \psi _\Omega M_u } = - \sigma \intv{(v \cdot E_1)^2 \Divv ( M_u \Omega) } \\
& = 2\sigma \intv{(v\cdot E_1) (E_1 \cdot \Omega) M_u } = 0.
\end{align*}
In this case \eqref{Equ57}, \eqref{Equ58} are the Euler equations, as expected when taking the limit $\eps \searrow 0$ in the Fokker-Planck equations
\[
\partial _t \fe + v \cdot \nabla _x \fe = \frac{1}{\eps} \Divv \{ \sigma \nabla _v \fe + \fe ( v - u[\fe])\},\;\;(t,x,v) \in \R_+ \times \R^d \times \R^d 
\]
that is
\[
\partial _t \rho + \Divx (\rho u) = 0,\;\;\partial _t u + \partial _x u u + \sigma \frac{\nabla _x \rho}{\rho} = 0,\;\;(t,x) \in \R_+ \times \R^d. 
\]

\section{Examples}
\label{Example}

We analyze now the potentials $v \mapsto \vab = \beta \frac{|v|^4}{4} - \alpha \frac{|v|^2}{2}$. Clearly the hypothesis \eqref{Equ11} is satisfied, and thus the function $Z(\sigma, |u|) = \intv{\exp \left ( - \frac{|v-u|^2}{2\sigma} - \frac{\vab{}}{\sigma} \right )}$ is well defined. As seen in Section \ref{PropEqui}, the sign of $\partial _l Z(\sigma, l)$, for small $\sigma >0$, depends on the sign of $V^\prime _{\alpha, \beta}$. The potential $V_{\alpha, \beta}$ satisfy \eqref{Equ15} with $r _0 = \sqrt{\alpha/\beta}$
\[
V^\prime _{\alpha, \beta} (r) = r ( \beta r^2 - \alpha )<0\;\mbox{ for } \;0 < r < \sqrt{\alpha/\beta}\;\mbox{ and } V^\prime _{\alpha, \beta } (r) >0 \;\mbox{ for any } r >  \sqrt{\alpha/\beta}.
\]
One can check that these potentials belong to the family $\mathcal{V}$, see \cite{Li19}. We include an example $V_{1,1} (|v|) = \frac{|v|^4}{4} - \frac{|v|^2}{2}$ for the sake of completeness. In this case the critical diffusion can be computed explicitly.
\begin{pro}
Consider the potential $v \mapsto V_{1,1}(|v|) = \frac{|v|^4}{4} - \frac{|v|^2}{2}$. The critical diffusion $\sigma _0 $ writes
\[
\sigma _0 ^{1/2} = \frac{1}{d} \frac{\int_{\R_+} \exp( - z^4/4) z^{d+1}\;\md z}{\int_{\R_+} \exp( - z^4/4) z^{d-1}\;\md z},\;\;d \geq 2.
\]
In particular, for $d = 2$ we have $\sigma _0 = 1/\pi$. 
\end{pro}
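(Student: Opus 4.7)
The plan is to exploit the characterization of $\sigma_0$ given in Proposition \ref{CriticalDiffusion}, namely $\partial_{ll}^2 Z(\sigma_0, 0) = 0$, together with the explicit form of $\Phi_0$ for the potential $V_{1,1}$. Since $V_{1,1}(|v|) = \tfrac{|v|^4}{4} - \tfrac{|v|^2}{2}$, we have $\Phi_0(v) = \tfrac{|v|^2}{2} + V_{1,1}(|v|) = \tfrac{|v|^4}{4}$, which collapses the quadratic part of the exponent and leaves a pure quartic weight. This is the key simplification that makes the computation explicit.

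First I would invoke the formula from Remark \ref{Modulus} at $u = 0$, which for any $\Omega \in \sphere$ reads
\[
\partial_{ll}^2 Z(\sigma, 0) = \frac{1}{\sigma^2}\intv{\exp\!\left(-\frac{|v|^4}{4\sigma}\right)\bigl[(v\cdot \Omega)^2 - \sigma\bigr]}.
\]
Setting this to zero gives a single scalar equation relating the two moments of $\exp(-|v|^4/(4\sigma))$ of degrees $d+1$ and $d-1$ in $|v|$. Next, using spherical coordinates together with the isotropy identity $\int_{\sphere}(\omega\cdot\Omega)^2\,\mathrm{d}S(\omega) = |\sphere|/d$, I would reduce the two $d$-dimensional integrals to one-dimensional integrals in $r = |v|$:
\[
\frac{1}{d}\int_{\R_+} r^{d+1} e^{-r^4/(4\sigma)}\,\md r \;=\; \sigma \int_{\R_+} r^{d-1} e^{-r^4/(4\sigma)}\,\md r.
\]

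Then I would perform the change of variable $r = \sigma^{1/4} z$, which cleanly removes $\sigma$ from the exponent: the left side becomes $\frac{\sigma^{(d+2)/4}}{d}\int_{\R_+} z^{d+1} e^{-z^4/4}\,\md z$ and the right side becomes $\sigma^{(d+4)/4}\int_{\R_+} z^{d-1} e^{-z^4/4}\,\md z$. Dividing through by $\sigma^{(d+2)/4}$ yields precisely
\[
\sigma_0^{1/2} = \frac{1}{d}\,\frac{\int_{\R_+} z^{d+1} e^{-z^4/4}\,\md z}{\int_{\R_+} z^{d-1} e^{-z^4/4}\,\md z},
\]
which is well defined since both integrals are finite and positive by \eqref{Equ11}. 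Uniqueness of $\sigma_0$ (and hence that we have really identified it and not merely a critical point of $Z(\cdot,0)$'s Hessian in $l$) is guaranteed by the fact that $V_{1,1} \in \mathcal{V}$, which ensures that the sign of $\partial_{ll}^2 Z(\sigma,0)$ changes exactly once as $\sigma$ traverses $\R_+^\star$.

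Finally, for $d=2$ I would evaluate the two universal integrals in closed form: the substitution $u = z^4/4$ gives $\int_{\R_+} z^3 e^{-z^4/4}\,\md z = 1$, while $t = z^2/2$ gives $\int_{\R_+} z\, e^{-z^4/4}\,\md z = \int_{\R_+} e^{-t^2}\,\md t = \sqrt{\pi}/2$, so $\sigma_0^{1/2} = \tfrac{1}{2}\cdot\tfrac{2}{\sqrt{\pi}} = 1/\sqrt{\pi}$ and hence $\sigma_0 = 1/\pi$. I do not anticipate a serious obstacle here: the main step is simply recognizing that the quadratic term in $\Phi_u$ at $u=0$ cancels against $-\alpha|v|^2/2$ in $V_{1,1}$, after which everything is a routine moment computation and scaling argument.
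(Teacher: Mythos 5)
Your proposal is correct and follows essentially the same route as the paper: invoke Proposition~\ref{CriticalDiffusion} to characterize $\sigma_0$ by $\partial_{ll}^2 Z(\sigma_0,0)=0$, use the second-derivative formula from Remark~\ref{Modulus} together with $\Phi_0(v)=|v|^4/4$, reduce to radial integrals via the factor $1/d$ from angular integration, and then rescale $r=\sigma^{1/4}z$. The only cosmetic difference is that you produce the $1/d$ from the isotropy identity $\int_{\sphere}(\omega\cdot\Omega)^2\,\md S=|\sphere|/d$ whereas the paper derives $\intth{\cos^2\theta\,\sin^{d-2}\theta}=\tfrac{1}{d}\intth{\sin^{d-2}\theta}$ by an integration by parts; these are the same fact.
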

\begin{proof}
We have 
\[
\Phiu (v) = \frac{|v-u|^2}{2} + V_{1,1} (|v|) = \frac{|v|^4}{4} - v \cdot u + \frac{|u|^2}{2}
\]
and therefore
\begin{align*}
Z(\sigma, l) & = \intv{\exp\left ( - \frac{\Phiu (v)}{\sigma} \right ) }= |\mathbb{S} ^{d-2}| \exp \left ( - \frac{l^2}{2\sigma} \right ) \\
& \int _{\R_+} \exp \left ( - \frac{r^4}{4\sigma} \right ) r^{d-1} \intth{\exp \left ( \frac{r l \cos \theta}{\sigma} \right ) \sin ^{d-2} \theta }\md r.
\end{align*}
Taking the second derivative with respect to $l$ one gets cf. Remark \ref{Modulus}
\begin{align*}
\partial _{ll} ^2 Z(\sigma, l)  & = \intv{\exp \left ( - \frac{|v|^4}{4\sigma} + \frac{v \cdot u }{\sigma} - \frac{l^2}{2\sigma} \right ) \frac{(v \cdot \Omega - l) ^2 - \sigma}{\sigma ^2} } = |\mathbb{S} ^{d-2}| \exp \left ( - \frac{l^2}{2\sigma} \right ) \\
& \int _{\R_+} \exp \left ( - \frac{r^4}{4\sigma} \right ) r^{d-1} \intth{\exp \left ( \frac{r l \cos \theta}{\sigma} \right ) \frac{(r  \cos \theta - l)^2 - \sigma }{\sigma ^2}\sin ^{d-2} \theta }\md r
\end{align*}
and therefore
\begin{align*}
\partial _{ll} ^2 Z(\sigma, 0) & = \frac{|\mathbb{S} ^{d-2}|}{\sigma ^2} \int _{\R_+} \exp \left ( - \frac{r^4}{4\sigma} \right ) r^{d+1} \;\md r \intth{\cos ^2 \theta  \sin ^{d-2} \theta }   \\
& - \frac{|\mathbb{S} ^{d-2}|}{\sigma}\int _{\R_+} \exp \left ( - \frac{r^4}{4\sigma} \right ) r^{d-1} \;\md r \intth{ \theta  \sin ^{d-2} \theta }. 
\end{align*}
It is easily seen that 
\begin{align*}
\intth{\cos ^2 \theta  \sin ^{d-2} \theta } & = \intth{\sin ^{d-2} \theta} + \intth{\cos ^\prime \theta \sin ^{d-1} \theta } \\
& = \intth{\sin ^{d-2}\theta} - (d-1) \intth{\cos ^2 \theta \sin ^{d-2} \theta }
\end{align*}
and thus 
\[
\intth{\cos ^2 \theta  \sin ^{d-2} \theta } = \frac{1}{d} \intth{\sin ^{d-2} \theta},\;\;d \geq 2. 
\]
We obtain the following expression for the second derivative $\partial _{ll} ^2 Z(\sigma, 0)$
\[
\frac{\partial _{ll} ^2 Z(\sigma, 0)}{|\mathbb{S} ^{d-2}|} =  \frac{\intth{\sin ^{d-2} \theta }}{\sigma ^2}  \left \{ 
\frac{1}{d} \int _{\R_+} \exp \left ( - \frac{r^4}{4\sigma} \right ) r^{d+1} \;\md  r
- \sigma \int _{\R_+} \exp \left ( - \frac{r^4}{4\sigma} \right ) r^{d-1} \;\md r  \right \}.
\]
Using the change of variable $r = \sigma ^{1/4}z$, we have
\[
\int _{\R_+} \exp \left ( - \frac{r^4}{4\sigma} \right ) r^{d+1} \;\md  r = \int _{\R_+} \exp \left ( - \frac{z^4}{4} \right ) z^{d+1} \;\md  z \;\sigma ^{\frac{d+2}{4}}
\]
\[
\int _{\R_+} \exp \left ( - \frac{r^4}{4\sigma} \right ) r^{d-1} \;\md  r = \int _{\R_+} \exp \left ( - \frac{z^4}{4} \right ) z^{d-1} \;\md  z \;\sigma ^{\frac{d}{4}}
\]
and thus $\partial _{ll} ^2 Z(\sigma, 0) >0$ iff 
\[
\sigma ^{1/2} < \frac{1}{d} \frac{\int _{\R_+} \exp \left ( - \frac{z^4}{4} \right ) z^{d+1} \;\md  z}{\int _{\R_+} \exp \left ( - \frac{z^4}{4} \right ) z^{d-1} \;\md  z}.
\]
The critical diffusion $\sigma _0$ is, cf. Proposition \ref{CriticalDiffusion}
\[
\sigma _0 ^{1/2} = \frac{1}{d} \frac{\int_{\R_+} \exp( - z^4/4) z^{d+1}\;\md z}{\int_{\R_+} \exp( - z^4/4) z^{d-1}\;\md z},\;\;d \geq 2.
\]
In particular, when $d = 2$, we obtain
\[
\int_{\R_+} \exp( - z^4/4) z^{3}\;\md z = \int_{\R_+} \exp( - z^4/4)\; \md \frac{z^4}{4} = \int _{\R_+} \exp (-s) \;\md s = 1
\]
and
\begin{align*}
\int_{\R_+} \exp( - z^4/4) z\;\md z =\int_{\R_+} \exp( - z^4/4)\; \md \frac{z^2}{2} = \int _{\R_+} \exp (-s^2) \;\md s = \frac{\sqrt{\pi}}{2}
\end{align*}
implying that $\sigma _0 = 1/\pi$.
\end{proof}



\subsection*{Acknowledgments}
This work was initiated during the visit of MB to Imperial College London with an ICL-CNRS Fellowship. MB was supported by the French Federation for Magnetic Fusion Studies (FR-FCM) and the Eurofusion consortium, and has received funding from the Euratom research and training programme 2014-2018 and 2019-2020 under grant agreement No 633053. The views and opinions expressed herein do not necessarily reflect those of the European Commission.
MB was supported by the A*MIDEX project (no ANR-11-IDEX-0001-02) funded by the Investissements d'Avenir French Government program, managed by the French National Research Agency (ANR).
JAC was partially supported by the EPSRC grant number EP/P031587/1.


\end{document}